\newtheorem{theorem}{Theorem}     
\numberwithin{theorem}{section}
\newtheorem{lemma}[theorem]{Lemma}     
\newtheorem{corollary}[theorem]{Corollary}     
\newtheorem{proposition}[theorem]{Proposition}     
\theoremstyle{definition}  
\newtheorem{example}[theorem]{Example}     
\newtheorem{remark}[theorem]{Remark}     
\def\cprime{$'$}
\newcommand{\CC}{{\mathbb C}}
\newcommand{\NN}{{\mathbb N}}
\newcommand{\QQ}{{\mathbb Q}}
\newcommand{\RR}{{\mathbb R}}
\newcommand{\TT}{{\mathbb T}}
\newcommand{\ZZ}{{\mathbb Z}}
\newcommand{\bbI}{\mathbbm{1}}
\newcommand{\calA}{{\mathcal A}}
\newcommand{\calB}{{\mathcal B}}
\newcommand{\calF}{{\mathcal F}}
\newcommand{\calG}{{\mathcal G}}
\newcommand{\calM}{{\mathcal M}}
\newcommand{\calO}{{\mathcal O}}
\newcommand{\calT}{{\mathcal T}}
\DeclareMathOperator{\closed}{closed}
\DeclareMathOperator{\conv}{conv}
\DeclareMathOperator{\cone}{cone}
\DeclareMathOperator{\spec}{spec}
\DeclareMathOperator{\supp}{supp}
\DeclareMathOperator{\Star}{star}
\DeclareMathOperator{\Hom}{Hom}
\DeclareMathOperator{\mon}{mon}
\newcommand{\lhra}{\ensuremath{\lhook\joinrel\relbar\joinrel\rightarrow}}
\newcommand{\simplex}{\includegraphics{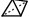}}
\newcommand{\bsimplex}{\includegraphics{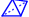}}
\definecolor{navyBlue}{cmyk}{1,1,0,0.2}
\newcommand{\defcolor}[1]{{\color{blue} #1}}
\newcommand{\demph}[1]{{\sl\defcolor{#1}}}
\newcounter{FNC}[page]
\def\fauxfootnote#1{{\addtocounter{FNC}{2}$^\fnsymbol{FNC}$%
    \let\thefootnote\relax\footnotetext{\color{magenta}{$^\fnsymbol{FNC}$#1}}}}
\title{Irrational toric varieties and secondary polytopes}
\author[A.~Pir]{Ata Firat Pir}     
\address{Ata Firat Pir\\     
  3200 Payne Ave, Apt. 24\\
  San Jose, CA \ 95117}     
\email{atafirat@gmail.com}     
\author[F.~Sottile]{Frank Sottile}     
\address{Frank Sottile\\     
         Department of Mathematics\\     
         Texas A\&M University\\     
         College Station\\     
         Texas \ 77843\\     
         USA}     
\email{sottile@math.tamu.edu}     
\urladdr{http://www.math.tamu.edu/~sottile}
\thanks{Research of Pir and Sottile supported in part by NSF grant DMS-1501370.}
\keywords{Secondary polytope, Toric Variety, Moment Map, Log-linear Model, LVM manifold,  B\'ezier patch}
\thanks{This manuscript has no associated data.}
\subjclass[2010]{14M25, 14P99, 52B99, 68U05}
\begin{document}

\begin{abstract}
 The space of torus translations and degenerations of a projective toric variety forms a toric variety associated to the secondary fan of  
 the {\sl integer} points in the polytope corresponding to the toric variety.
 This is used to identify a moduli space of real degenerations with the secondary polytope.
 A configuration $\calA$ of real vectors gives an irrational projective toric variety in a simplex.
 We identify a space of translations and degenerations of the irrational projective toric variety with the secondary polytope of
 $\calA$.  
 For this, we  develop a theory of irrational toric varieties associated to arbitrary fans.
 When the fan is rational, the irrational toric variety is the nonnegative part of the corresponding classical
 toric variety. 
 When the fan is the normal fan of a polytope, the irrational toric variety is homeomorphic to that polytope. 
\end{abstract}

\maketitle

\section*{Introduction}

Fulton observed that ``Toric varieties have provided a remarkably fertile testing ground for general theories''~\cite[Preface,~ix]{Fulton}.
One reason for this is their connection to geometric combinatorics.
Toric varieties in algebraic geometry arise in three guises~\cite[Chs.~1--3]{CLS}:
as an affine variety $X_\calA$ parametrized by exponents $\calA\subset\ZZ^n$ of monomials, as a variety $X_P$ projectively embedded by a
line bundle associated to an integer polytope $P\subset\RR^n$, and as a normal variety $X_\Sigma$ functorially constructed from a rational
fan $\Sigma\subset\RR^n$. 
These three guises come together in a beautiful solution to a moduli problem.
If we replace $X_\calA$ by its projective closure, the torus of projective space acts on $X_\calA$ and the collection of torus translates
of $X_\calA$ and their limit schemes forms a torus-invariant subscheme of the Hilbert scheme.
Its normalization is the toric variety $X_{\Sigma}$ associated to the secondary fan $\Sigma$ of $\calA$~\cite{Alexeev,KSZ1,KSZ2}.
Embedding $X_{\Sigma}$ using the secondary polytope $P$ of $\calA$ identifies $X_\Sigma$ with $X_P$, and this has a moment map to $P$, which
identifies its nonnegative part with $P$.

Toric varieties, or at least their nonnegative parts, occur naturally in applications of mathematics.
In statistics, an exponential family (of probability distributions) is the nonnegative part of the projective cloure of an affine toric
variety $X_\calA$, considered as a subset of the probability simplex.
These arose in the 1930's~\cite{Dar,Koopman,PW} and are now called toric models~\cite[Sect.~1.2.2]{ASCB} or  log-linear
models~\cite{Goodman}. 
A version of the moment map (sufficient statistics) is important for toric models, and in 1963~\cite{Birch} Birch proved that the toric
model is homeomorphic to the polytope $\conv(\calA)$ under this `algebraic moment map'.
This predates the celebrated (and more general) work of Atiyah~\cite{A82,GS82} on moment maps.
Projective toric varieties $Y_P$ are also understood as the source for B\'ezier patches in geometric modeling.
This is a consequence of Krasauskas's introduction of toric B\'ezier patches~\cite{Krasauskas}, which generalize classical B\'ezier patches.
For these, the moment map (an independent proof is given in~\cite[Thm.~26]{Krasauskas}) underlies the important property of linear
precision~\cite{LPPP}. 

It is also natural to consider toric patches as arising from the nonnegative part of an affine toric variety $X_\calA$.
In this guise, both applications allow the exponents $\calA$ to be any real vectors.
This leads to irrational affine toric varieties $Y_\calA$~\cite{CGS}, which are analytic subsets of the nonnegative orthant of a real vector
space.
When $\calA\subset\ZZ^n$, $Y_\calA$ is the nonegative part of the affine toric variety $X_\calA$.
When $\calA$ lies on an affine hyperplane, the intersection $Z_\calA$ of $Y_\calA$ with the standard (probability) simplex is both an
irrational toric model and a source for toric B\'ezier patches.
By Birch's Theorem, $Z_\calA$  is homeomorphic to the polytope $\conv(\calA)$.

Applications from geometric modeling~\cite{CGS,GSZ} lead to the irrational version of the moduli problem
involving the secondary fan and polytope.
That is, there is a multiplicative action of the positive torus on the probability simplex, and understanding the possible limits of
translates of $Z_\calA$ provides an explanation of control structures for patches.
Here, these limits are understood set-theoretically or rather metrically in terms of the Hausdorff topology on closed subsets of the
probability simplex.
When $\calA\subset\ZZ^n$, the toric moduli spaces from~\cite{Alexeev,KSZ1,KSZ2} imply that the possible limits are all toric degenerations
of $Z_\calA$, and identifies the real points of the moduli space with the secondary polytope of $\calA$.
This is the main result in~\cite{GSZ}.

When $\calA$ is irrational, we do not have the full power of algebraic geometry, and other means are needed to study the space of real
torus translates of $Z_\calA$ and their Hausdorff limits.
In~\cite{PSV}, all Hausdorff limits are shown to be toric degenerations of $Z_\calA$ and the space of translates and limits is
understood set-theoretically in terms of the secondary fan of $\calA$.
Our purpose is to promote this set-theoretic understanding to one in terms of equivariant cell complexes and show that the resulting moduli
space (defined in Section~\ref{S:Hausdorff}) is  
homeomorphic to the secondary polytope of $\calA$, this is the content of Theorem~\ref{Th:SecondaryPolytope}.

To accomplish this identification, we develop a theory of irrational toric varieties $Y_\Sigma$ associated to arbitrary fans $\Sigma$ in
real vector spaces.
This theory is very satisfying, with many parallels to the classical theory of toric varieties associated to rational fans.
The irrational toric variety $Y_\Sigma$ is an equivariant cell complex (Theorem~\ref{Th:ITV_structure}), the association from fans is
functorial (Theorem~\ref{Th:mapsOfFans}), and the fan $\Sigma$ may be recovered from the  irrational toric variety $Y_\Sigma$
(Corollary~\ref{C:recoverFan}). 
Also, $Y_\sigma$ is compact if and only if the fan $\Sigma$ is complete (Theorem~\ref{Th:complete}).

An important property of classical toric varieties that we do not yet have for irrational toric varieties is an equivalence of categories
between fans and irrational toric varieties.
We believe this will require enriching irrational toric varieties and irrational fans with additional structure, so that we obtain a sheaf
of functions on an irrational toric variety possessing some form of noetherianity.
One possible source for these structures, at least locally, may be the recent articles of Miller~\cite{Miller3,Miller2,Miller1}.

This theory of irrational toric varieties has a similar motivation to the development of small covers~\cite{DJ} from toric
topology~\cite{BP}, noncommutative toric varieties~\cite{KLMV}, and Ford's toroidal embedding from an irrational
fan~\cite{Ford}---these all generalize some aspects of toric varieties to more general polytopes and fans.
We also understand LVM manifolds~\cite{LV,Me} as another topic in this theme.
Some connections between these topics are covered in the survey~\cite{Verjovsky}.
Another related approach to generalizing classical toric varieties to irrational objects from the view of symplectic geometry are
tori quasifolds~\cite{BaPr,BZ,P}.
A forthcoming volume of surveys~\cite{IMSA_volume} aims to help unify these different irrational generalizations of classical toric
varieties. 

In Section~\ref{S:classical}, we sketch the classical construction of a toric variety from a rational fan, and 
recall that a toric variety is a functor on commutative monoids.
We recall some properties of irrational affine toric varieties in Section~\ref{S:Affine}.
We construct irrational toric varieties from arbitrary fans in Section~\ref{S:irrational}, and 
establish their main properties.
Section~\ref{S:projective} develops global properties of irrational toric varieties.
While our results on irrational toric varieties parallel some on classical toric varieties, their proofs require
different methods, as fundamental facts from algebraic geometry do not hold for irrational toric varieties.
This is condensed from the 2018 Texas A\&M Ph.D.\ thesis of Pir~\cite{Ata_Thesis}.
Having developed this theory, we use it in Section~\ref{S:Hausdorff} to establish our main result,
identifying the moduli space of Hausdorff limits of a projective irrational toric variety with the secondary polytope.

\section{Classical Toric Varieties}
\label{S:classical}

We review the construction of toric varieties in algebraic geometry from rational fans and recall some of their
properties. 
For additional treatment of toric varieties, see any of~\cite{CLS,Ewald,Fulton}.
For more on geometric combinatorics, see~\cite{GKZ,GBCP,Ziegler}.

Let \defcolor{$N_\ZZ$} be a free abelian group of rank $n$ ($N_\ZZ\simeq\ZZ^n$) and let $\defcolor{M_\ZZ}:=\Hom(N_\ZZ,\ZZ)$ be
its dual group.
Write $\defcolor{u\cdot v}\in\ZZ$ for the pairing, where $u\in M_\ZZ$ and $v\in N_\ZZ$.
Write $\TT_N$ for the abelian group scheme $\spec \ZZ[M_\ZZ]$.
This is a torus (integral abelian group scheme) as $\ZZ[M_\ZZ]$ is a domain.
Its lattice of cocharacters is $N_\ZZ$ and $M_\ZZ$ is its lattice of characters. 

Let \defcolor{$\RR_>$} be the positive real numbers and \defcolor{$\RR_\geq$} be the nonnegative real numbers.
Let \defcolor{$N$} be the real vector space $N_\ZZ\otimes_\ZZ\RR$ ($\simeq\RR^n$).
A (polyhedral) cone $\sigma\subset N$ is a submonoid of the form
 \begin{equation}\label{Eq:cone}
   \defcolor{\cone\{v_1,v_2,\dotsc, v_k\}}\ :=\   
   \RR_\geq v_1 \ +\  \RR_\geq v_2 \ +\ \dotsb \ +\   \RR_\geq v_k \,,
 \end{equation}
where $v_1,\dotsc,v_k\in N$.
(A \demph{monoid} is a set endowed with an associative commutative operation with identity.)
The \demph{dual cone} of a polyhedral cone $\sigma\subset N$ lies in $\defcolor{M}:=M_\ZZ\otimes_\ZZ\RR$,
and is
\[
   \defcolor{\sigma^\vee}\ :=\ \{ u\in M\mid  u\cdot v  \geq 0\,,\ \forall v\in\sigma\}\,.
\]
This is again a (polyhedral) cone in $M$ and $(\sigma^\vee)^\vee=\sigma$.
A \demph{face} of a cone $\sigma$ is a subset of the form $\{v\in\sigma\mid  u\cdot v = 0\}$, for some
$u\in\sigma^\vee$. 
A face of a cone is another cone.
The \demph{relative interior $\sigma^\circ$}  of a cone $\sigma$ is the complement in $\sigma$ of its proper faces.

The minimal face of a cone $\sigma$ is a linear space \defcolor{$L$}, called its \demph{lineality space}.
This is the maximal linear subspace contained in $\sigma$.
The dual cone to $L$ is its annihilator, $\defcolor{L^\perp}=L^\vee$ and $\sigma^\vee\subset L^\perp$.
The lineality space of the dual cone $\sigma^\vee$ is the annihilator $\sigma^\perp$ of $\sigma$, and the dual to
$\sigma^\perp$ is the linear span \demph{$\langle\sigma\rangle$} of $\sigma$, which is also the annihilator of
$\sigma^\perp$. 

We record one technical fact about faces of cones and their duals.
It is a consequence of Equation (11) on page 13 in~\cite{Fulton}, and its proof is nearly the same as that of its discrete version, which is
Proposition 2 in {\it  loc.~cit.}, also Proposition 1.3.16 in~\cite{CLS}.

\begin{proposition}\label{P:coneFact}
 Let $\sigma,\tau\subset N$ be cones with $\tau$ a face of $\sigma$.
 Then for any $w\in\tau^\vee$, there are $u,\ell\in\sigma^\vee$ with $\ell\in\tau^\perp$ such that 
 $w=u-\ell$.
\end{proposition}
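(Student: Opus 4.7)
The plan is to exploit the definition of a face directly. Since $\tau$ is a face of $\sigma$, there is some $u_0 \in \sigma^\vee$ with $\tau = \{v \in \sigma : u_0 \cdot v = 0\}$, and in particular $u_0 \in \tau^\perp$. I would use $u_0$ to push $w$ into $\sigma^\vee$: for sufficiently large $t \geq 0$, the element $u := w + t u_0$ lies in $\sigma^\vee$. Granting this, setting $\ell := t u_0$ finishes the proof, since $\ell$ is a nonnegative multiple of $u_0 \in \sigma^\vee$ (so $\ell \in \sigma^\vee$), $\ell$ is a scalar multiple of $u_0 \in \tau^\perp$ (so $\ell \in \tau^\perp$), and by construction $w = u - \ell$.

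To verify the claim about $u$, I would fix a finite generating set $v_1,\dotsc,v_k$ of $\sigma$ as in~\eqref{Eq:cone}, so that checking membership in $\sigma^\vee$ reduces to checking the inequalities $u \cdot v_i \geq 0$ for $i=1,\dotsc,k$. Each $v_i$ falls into one of two cases. If $v_i \in \tau$, then $u_0 \cdot v_i = 0$, so $(w + t u_0) \cdot v_i = w \cdot v_i \geq 0$ holds automatically because $w \in \tau^\vee$. If $v_i \notin \tau$, then $u_0 \cdot v_i \geq 0$ (as $u_0 \in \sigma^\vee$) and $u_0 \cdot v_i \neq 0$ (as $\tau = \sigma \cap u_0^\perp$), forcing $u_0 \cdot v_i > 0$; then $(w + t u_0) \cdot v_i \geq 0$ for every $t \geq -w \cdot v_i / (u_0 \cdot v_i)$. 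Taking $t$ to be the maximum of $0$ and these finitely many lower bounds produces the required $u$.

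The entire argument uses nothing beyond the definitions of $\sigma^\vee$, $\tau^\vee$, $\tau^\perp$, and of a face. The only real technical step is the reduction from testing $\sigma^\vee$ on all of $\sigma$ to testing on finitely many generators, which is immediate from~\eqref{Eq:cone}. I do not anticipate a genuine obstacle here; the proposition is essentially the familiar dual-cone identity $\tau^\vee = \sigma^\vee + \tau^\perp$ for a face $\tau \preceq \sigma$, phrased in the precise additive form that the later sections will want to apply.
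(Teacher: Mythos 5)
Your proof is correct. The paper states Proposition~\ref{P:coneFact} without proof, presenting it merely as a recorded technical fact, so there is no argument to compare against; your approach---exposing the face $\tau$ by a single $u_0\in\sigma^\vee$ and then adding a suitably large nonnegative multiple of $u_0$ to push $w\in\tau^\vee$ into $\sigma^\vee$, checking the finitely many generator inequalities---is the standard one and uses only the definitions in play.
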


A \demph{fan} $\Sigma\subset N$ is a finite collection of polyhedral cones in $N$ with the property that every face of
every cone $\sigma$ in $\Sigma$ is again a cone in $\Sigma$, and if $\sigma,\tau$ are cones in $\Sigma$, then
$\sigma\cap\tau$ is a face common to both $\sigma$ and $\tau$.
The cones in a fan have a common lineality space.
A fan $\Sigma$ is \demph{complete} if every point of $N$ lies in some cone of $\Sigma$.
If $v_1,\dotsc,v_k\in N_\ZZ$, then $\cone\{v_1,\dotsc, v_k\}\subset N$ is \demph{rational}.
The dual of a rational cone is again rational as are all of its faces.
A fan $\Sigma\subset N$ is rational when each of its cones is rational.

Given a rational cone $\sigma\subset N$, let $\defcolor{S_\sigma}:=\sigma^\vee\cap M_\ZZ$ be the set of characters of
$\TT_N$ that lie in $\sigma^\vee$.
This is a finitely generated submonoid of $M_\ZZ$ that is saturated (whenever $u\in M_\ZZ$ with 
$m u\in S_\sigma$ for some $m\in\NN$, then $u\in S_\sigma$).
We define $\defcolor{W_\sigma}:=\spec \ZZ[S_\sigma]$, which is a normal affine scheme.
(Normal as $S_\sigma$ is saturated.)
When $\tau\subset\sigma$ is a face of $\sigma$, we have $\sigma^\vee\subset\tau^\vee$.
Then $S_\sigma\subset S_\tau$ and the induced map $W_\tau\hookrightarrow W_\sigma$ is an open inclusion.
Suppose that $\Sigma$ is a rational fan in $N$.
The \demph{toric scheme $X_\Sigma$} is obtained by gluing the affine schemes $W_\sigma$ for $\sigma$ a cone in $\Sigma$
along common subschemes corresponding to smaller cones in $\Sigma$,
\[
   X_\Sigma\ :=\ \bigcup_{\sigma\in\Sigma} W_\sigma\,.
\]
The scheme $X_\Sigma$ is normal as each $W_\sigma$ is normal and normality is a local property.

The map $\Delta\colon S_\sigma\to M_\ZZ\times S_\sigma$ given by $\Delta(u)=(u,u)$ induces a ring map
$\Delta\colon \ZZ[S_\sigma]\to \ZZ[M_\ZZ]\otimes \ZZ[S_\sigma]$, so that $\ZZ[S_\sigma]$ is a $\ZZ[M_\ZZ]$-comodule.
This induces an action $\TT_N\times W_\sigma\to W_\sigma$ of the group scheme $\TT_N$ on $W_\sigma$.
This action is compatible with the inclusions $W_\tau\hookrightarrow W_\sigma$ when $\tau$ is a face of $\sigma$, and
therefore gives an action of $\TT_N$ on the toric scheme $X_\Sigma$.

If $L$ is the lineality space of $\Sigma$, then $S_L=L^\perp\cap M_\ZZ$, which is a free abelian summand of $M_\ZZ$.
Then $W_L=\spec\ZZ[S_L]$ is the quotient $\TT_N/\TT_L$, where $\TT_L$ is the group subscheme of $\TT_N$
generated by the cocharacters in $L\cap N_\ZZ$.
The action of $\TT_N$ on $X_\Sigma$ factors through the quotient $\TT_N/\TT_L=W_L$, so that $W_L$ is isomorphic to a dense
orbit of this action.
Thus $X_\Sigma$ is a normal scheme equipped with an action of a torus $\TT_N$ having a dense orbit.

Given any field $K$, $X_\Sigma$ has a set $X_\Sigma(K)$ of $K$-rational points constructed by the same gluing procedure from
the points $W_\sigma(K)$ of $\spec K[S_\sigma]$ with residue field $K$.
There is another perspective that associates a set $X_\Sigma(\calM)$ to a commutative monoid
$(\calM,*,1_\calM)$.
This is functorial in that a map $f\colon \calM\to\calM'$ of monoids induces a map 
$f_*\colon X_\Sigma(\calM)\to X_\Sigma(\calM')$. 

The construction of $X_\Sigma(\calM)$ is similar to that of the scheme $X_\Sigma$.
For each cone $\sigma$ of $\Sigma$, set $\defcolor{W_\sigma(\calM)}:=\defcolor{\Hom_{\mon}(S_\sigma,\calM)}$, the set
of monoid homomorphisms, which are maps  $\varphi\colon S_\sigma\to\calM$ with $\varphi(0)=1_{\calM}$ and
$\varphi(a+b)=\varphi(a)*\varphi(b)$ for all $a,b\in S_\sigma$. 
Restriction gives inclusion maps $W_\tau(\calM)\hookrightarrow W_\sigma(\calM)$, with $X_\Sigma(\calM)$
constructed by gluing as before. 

Multiplication in a field $K$ gives it the structure of a monoid with $0$ an absorbing element ($0x=0$ for all $x\in K$).
The restriction of any algebra homomorphism $K[S_\sigma]\to K$ to $S_\sigma$ is a monoid homomorphism, and every monoid
homomorphism $\varphi\colon S_\sigma\to K$ extends by linearity to an algebra homomorphism $K[S_\sigma]\to K$.
Thus the two definitions for $W_\sigma(K)$ agree, and so we may construct $X_\Sigma(K)$
by gluing sets of monoid homomorphisms.

This illuminates some structure of the set $X_\Sigma(K)$ of $K$-points.
If $t\in \TT_N(K)$ and $\varphi\in\Hom_{\mon}(S_\sigma,K)$, then $t.\varphi$ is the monoid homomorphism such that
for $u\in S_\sigma$,
\[
   (t.\varphi)(u)\ =\ t^u \varphi(u)\,,
\]
where \defcolor{$t^u$} is the value of the character $u$ at $t$.

The maps of monoids $\RR_{\geq}\hookrightarrow\RR\hookrightarrow\CC\twoheadrightarrow \RR_{\geq}$ with the last map
$z\mapsto|z|$ has composition the identity.
For any rational fan $\Sigma$, this induces maps
\[
  X_{\Sigma}(\RR_{\geq})\ \lhra\ X_{\Sigma}(\RR)\ \lhra\ 
   X_{\Sigma}(\CC)\ \relbar\joinrel\twoheadrightarrow\ X_{\Sigma}(\RR_{\geq})\,,
\]
whose composition is the identity.
The set $X_{\Sigma}(\RR_{\geq})$ is the \demph{nonnegative part} of the toric variety $X_\Sigma$ and the map
$X_\Sigma(\CC)\twoheadrightarrow X_\Sigma(\RR_{\geq})$ is a version of the moment map~\cite{vilnius}.

\section{Irrational Affine Toric Varieties}
\label{S:Affine}

Irrational affine toric varieties are treated in the papers~\cite{CGS,PSV}.
We update that treatment, with an eye towards the classical development in~\cite[Ch.~4]{GBCP} and~\cite[Ch~1]{CLS}.

Let $M$ and $N$ be dual finite-dimensional real vector spaces, and write the pairing $M\times N\to\RR$ as
$(u,v)\mapsto u\cdot v$.
The vector space $N$ is the torus for our irrational toric varieties.
We often write $N$ as \defcolor{$T_N$} and use multiplicative
notation for its group operation.
For $v\in N$, we define the continuous homomorphism $\gamma_v\colon M\to \RR_>$ by
 \begin{equation}\label{Eq:gammav}
  \defcolor{\gamma_v}\ \colon\ M\ni u\ \longmapsto\ \exp(-u\cdot v)\,.
 \end{equation}
(The negative sign is for compatibility with certain limits.)
This map $v\mapsto\gamma_v$ is a group isomorphism $T_N\xrightarrow{\,\sim\,} \Hom_c(M,\RR_>)$, where \defcolor{$\Hom_c(M,\RR_>)$} is the
multiplicative group of continuous homomorphisms from $M$ to the multiplicative group $\RR_>$.
Then $v\mapsto\gamma_v$ identifies $T_N$ with $\Hom_c(M,\RR_>)$.
When $t=\gamma_v$, we write $\defcolor{t^u}$ for $\gamma_v(u)$.
Elements of $M$ are characters (continuous multiplicative homomorphisms to $\RR_>$) of $T_N$ and elements $t\in T_N$ are
\demph{cocharacters}. 
For a linear subspace $L\subset N$, we will write \defcolor{$T_L$} for the corresponding subgroup of $T_N$.

Let $\calA$ be a finite subset of $M$ and set $\sigma:=\cone(\calA)$, a polyhedral cone in $M$.
A subset $\calF\subset\calA$ is a \demph{face} of $\calA$ if it consists of the points of $\calA$ lying on a face $\tau$
of $\sigma$.  
Necessarily, $\cone(\calF)=\tau$.
Let $\defcolor{\varphi_\calA}\colon T_N\to \RR^\calA_{\geq}$ be the map given by
 \[
   T_N\ \ni\ t\ \longmapsto\ ( t^a \mid a\in\calA) \ \in\ \RR^\calA_{\geq}\,.
 \]
We are using $\calA$ as an index set, so that $\defcolor{\RR^\calA_{\geq}}=\RR^{|\calA|}_{\geq}$ is the set of
$|\calA|$-tuples of nonnegative real numbers whose coordinates are indexed by elements of $\calA$.
(This is the nonnegative orthant of $\RR^\calA$.)
The map $\varphi_\calA$ is a group homomorphism from $T_N$ to \defcolor{$\RR^\calA_>$},
which is the set of points of $\RR^\calA_{\geq}$ with nonzero coordinates.
Write \defcolor{$Y_\calA^\circ$} for the image of $T_N$ under $\varphi_\calA$, 
and let \defcolor{$Y_\calA$} be the closure of $Y_\calA^\circ$ in the usual topology on
$\RR^\calA_{\geq}$. 
We call $Y_\calA$ an \demph{irrational affine toric variety}.
It inherits a continuous $T_N$-action from the homomorphism $\varphi_\calA$.

\begin{remark}
 Under the map $\gamma\colon N\xrightarrow{\,\sim\,}T_N$ and the coordinatewise map 
 $-\log\colon\RR^\calA_>\xrightarrow{\sim}\RR^\calA$, the map
 $\varphi$ becomes the linear map $N\to\RR^\calA$,
\[
   N\ \ni\ v\ \longmapsto\ (a\cdot v \mid a\in A)\ \in\ \RR^A\,.
\]
 This is why irrational toric varieties in algebraic statistics are called log-linear models.\hfill$\diamond$
\end{remark}

The kernel of $\varphi_\calA$ is $T_{\calA^\perp}$, where $\calA^\perp$ is the subspace of $N$ that annihilates $\calA$,
\[
   \defcolor{\calA^\perp}\ :=\ \{ v\in N\,\mid\, a\cdot v=0\quad\forall a\in\calA\}\,.
\]
and thus $Y^\circ_\calA$ is homeomorphic to $T_N/T_{\calA^\perp}\  (\simeq N/\calA^\perp$).
When $\calA\subset M_\ZZ$, the ideal of $Y_\calA$ is spanned by binomials determined by $\calA$~\cite[Lem.~4.1]{GBCP}. 
This remains true when $\calA\subset M$.

\begin{proposition}\label{P:IATV}
 The irrational affine toric variety $Y_\calA$ is the set of points $z\in\RR^\calA_\geq$ that satisfy all binomial
 equations of the form 
 \begin{equation}\label{Eq:binomials}
   \prod_{a\in\calA} z_a^{\lambda_a}\ =\ \prod_{a\in\calA} z_a ^{\mu_a}\ ,
 \end{equation}
 where $\lambda,\mu\in\RR^\calA_\geq$ satisfy $\sum_{a\in\calA} \lambda_a a=\sum_{a\in\calA} \mu_a a$.
\end{proposition}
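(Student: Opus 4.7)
The plan is to show that the set $V \subseteq \RR^\calA_\geq$ cut out by the stated binomial equations coincides with $Y_\calA$. The inclusion $Y_\calA \subseteq V$ is essentially bookkeeping: for any $t \in T_N$, $\prod_a (t^a)^{\lambda_a} = t^{\sum_a \lambda_a a}$, so $Y_\calA^\circ \subseteq V$; and since each map $z \mapsto z^\lambda$ is continuous on $\RR^\calA_\geq$ (with the conventions $0^0 = 1$ and $0^c = 0$ for $c > 0$), $V$ is closed, so $Y_\calA = \overline{Y_\calA^\circ} \subseteq V$.

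For the reverse inclusion, I will fix $z \in V$, set $\calF := \supp(z)$, and carry out three steps. Step 1 shows $\calF$ is a face of $\calA$. Let $F$ be the smallest face of $\sigma := \cone(\calA)$ containing $\cone(\calF)$, and set $p := \sum_{a \in \calF} a$. A standard argument shows $p$ lies in the relative interior of $F$: were $p$ in a proper face $G \subsetneq F$, each summand $a \in \calF \subset \sigma$ would also have to lie in $G$, contradicting minimality of $F$. Suppose now $a_0 \in (F \cap \calA) \setminus \calF$. The relative-interior property then gives $t > 0$ with $(1+t)p - t a_0 \in F \subseteq \sigma$, hence $(1+t) p = t a_0 + \sum_a c_a a$ for some $c_a \geq 0$. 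Setting $\lambda_a = (1+t)\bbI_\calF(a)$ and $\mu_a = c_a + t \bbI_{\{a_0\}}(a)$ gives a relation $\sum \lambda_a a = \sum \mu_a a$, and the corresponding binomial reads
\[
   \prod_{a \in \calF} z_a^{1+t}\ =\ z_{a_0}^{t+c_{a_0}} \prod_{a \neq a_0} z_a^{c_a}\,.
\]
The left side is positive and the right side vanishes (since $z_{a_0} = 0$ and $t + c_{a_0} > 0$), a contradiction; hence $\calF = F \cap \calA$.

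Step 2 realizes $z|_\calF$ as $\varphi_\calF(\gamma_v)$ for some $v \in N$. This requires solving the linear system $a \cdot v = -\log z_a$ for $a \in \calF$, which is consistent provided every real relation $\sum_{a \in \calF} c_a a = 0$ forces $\sum_{a \in \calF} c_a \log z_a = 0$. Splitting each $c_a$ into positive and negative parts and extending by zero to $\calA$ produces $\lambda, \mu \in \RR^\calA_\geq$ with $\sum \lambda_a a = \sum \mu_a a$; the hypothesis $\prod z_a^{\lambda_a} = \prod z_a^{\mu_a}$ yields the required logarithmic identity. Step 3 takes a limit: since $\calF$ is a face there is $w \in N$ with $a \cdot w = 0$ on $\calF$ and $a \cdot w > 0$ on $\calA \setminus \calF$, and then the $a$-th coordinate of $\varphi_\calA(\gamma_{v + sw})$ equals $z_a$ for $a \in \calF$ and equals $e^{-s\,a \cdot w}\gamma_v^a \to 0$ for $a \notin \calF$, so $z = \lim_{s \to \infty} \varphi_\calA(\gamma_{v+sw}) \in Y_\calA$.

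The hard part will be Step 1, showing that $\supp(z)$ is forced to be a face. The key trick is the relative-interior construction of $p$: it manufactures a real relation in which $a_0$ appears with strictly positive coefficient, giving the binomial a vanishing factor that contradicts the positivity of the $\calF$-side. Without such a relation, the purely multiplicative hypothesis cannot be leveraged into combinatorial information about $\calA$. Once $\calF$ is known to be a face, the remaining steps reduce to elementary linear algebra and a one-parameter limit, paralleling the integer case.
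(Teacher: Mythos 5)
Your proposal is correct. The paper does not give a self-contained proof of this proposition, instead deferring to \cite[Prop.~B.3]{CGS} for the case where $\calA$ lies on an affine hyperplane and to \cite{Ata_Thesis} for the general case, so a line-by-line comparison is not possible; but your three-step argument---showing $\supp(z)$ is a face of $\calA$ via the relative-interior trick, solving the resulting linear system on that face using the positive/negative splitting of a real relation, and realizing $z$ as a one-parameter limit $\lim_{s\to\infty}\varphi_\calA(\gamma_{v+sw})$---is exactly the standard implicitization strategy those sources use, handles the general (not-necessarily-affine) case directly, and I find no gaps in it.
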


This is stated without proof as Equation (8) after Theorem 2.2 in~\cite{PSV}.
If $\calA$ lies on an affine hyperplane in $M$, this was shown in~\cite[Prop.~B.3]{CGS} and that proof is easily modified
for the general case, see~\cite[Prop.~5.4]{Ata_Thesis} for details.

For each face $\calF\subset\calA$, there is an inclusion $\RR^\calF_\geq\subset\RR^\calA_\geq$, where
$\RR^\calF_\geq$ is the set of $z\in\RR^\calA_\geq$ whose coordinates $z_a$ are zero for $a\not\in\calF$.
Then $Y^\circ_\calF\subset\RR^\calF_\geq$ is the image of $T_N$ under the map $\varphi_\calF$, which is also the composition
of $\varphi_\calA$ with the projection to the coordinate orthant $\RR^\calF_\geq$.
The proof of Proposition~\ref{P:IATV} shows that $Y^\circ_\calF\subset Y_\calA$ and also that 
 \begin{equation}\label{Eq:decomposition}
    Y_\calA \ =\ \bigsqcup Y^\circ_\calF\,,
 \end{equation}
the (disjoint) union over faces $\calF$ of $\calA$.
This is also the decomposition of $Y_\calA$ into orbits of $T_N$,
where the orbit $Y^\circ_\calF$ is in bijection with $T_N/T_{\calF^\perp} \simeq N/\calF^\perp$.

The affine toric variety $Y_\calA$ is a subset of the nonnegative orthant $\RR^\calA_\geq$.
The \demph{tautological map} \defcolor{$\pi_\calA$} parameterizing $\cone(\calA)$ has domain $\RR^\calA_\geq$:
\[
  \pi_\calA\ \colon\    \RR^\calA_\geq\ \ni\ \lambda=(\lambda_a\mid a\in\calA)\ 
   \longmapsto\ \sum_{a\in\calA} \lambda_a a\ \in\ \cone(\calA)\,.
\]
Consequently, $\pi_\calA(Y_\calA)\subset\cone(\calA)$.
By Birch's Theorem~\cite{Birch}, this map is a homeomorphism between $Y_\calA$ and $\cone(\calA)$.

\begin{proposition}[Birch]\label{P:Birch}
 The restriction of the tautological map $\pi_\calA\colon  \RR^\calA_\geq\to\cone(\calA)$ to the irrational affine toric
 variety $Y_\calA$ is a homeomorphism,
\[ 
   \pi_\calA\ \colon\ Y_\calA\ \xrightarrow{\ \sim\ }\ \cone(\calA)\,.
\]
\end{proposition}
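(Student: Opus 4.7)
The plan is to establish the bijection stratum-by-stratum via a strict convexity argument, then upgrade the continuous bijection to a homeomorphism by proving properness. First I would invoke the orbit decomposition~\eqref{Eq:decomposition}: $Y_\calA=\bigsqcup_\calF Y^\circ_\calF$, with $\pi_\calA(Y^\circ_\calF)\subset\cone(\calF)^\circ$ since any $z\in Y^\circ_\calF$ has $z_a>0$ for $a\in\calF$ and $z_a=0$ elsewhere, placing $\pi_\calA(z)=\sum_{a\in\calF}z_a a$ in the relative interior of $\cone(\calF)$. Because $\cone(\calA)=\bigsqcup_\calF\cone(\calF)^\circ$ is the stratification by relatively open faces, it then suffices to show that for each face $\calF$ the restriction $\pi_\calA\colon Y^\circ_\calF\to\cone(\calF)^\circ$ is a bijection, and to verify properness globally.

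For the bijection on a single orbit I would parameterize $Y^\circ_\calF\simeq T_N/T_{\calF^\perp}$ by $v\in N/\calF^\perp$ through $v\mapsto(e^{-a\cdot v}\mid a\in\calF)$. Then $\pi_\calA$ becomes $\Phi(v)=\sum_{a\in\calF}e^{-a\cdot v}\,a=-\nabla F(v)$, where $F(v):=\sum_{a\in\calF}e^{-a\cdot v}$. The Hessian $\sum_{a\in\calF}e^{-a\cdot v}\,a\, a^\top$ is positive definite modulo $\calF^\perp$, so $F$ is strictly convex on $N/\calF^\perp$. For $p\in\cone(\calF)^\circ$ I would minimize the strictly convex function $G(v):=F(v)+p\cdot v$; a minimizer $v^*$ satisfies $\Phi(v^*)=p$, and uniqueness follows from strict convexity.

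The main obstacle is coerciveness of $G$ on $N/\calF^\perp$. Along a sequence $v_n=t_n w_n$ with $t_n\to\infty$ and $w_n\to w$, $\|w\|=1$, $w\notin\calF^\perp$, there are two cases. If some $a\in\calF$ satisfies $a\cdot w<0$, then $e^{-a\cdot v_n}$ grows exponentially in $t_n$ and dominates the linear term. Otherwise $a\cdot w\geq 0$ for every $a\in\calF$; writing $p=\sum_{a\in\calF}c_a a$ with $c_a>0$, one obtains $p\cdot w=\sum_a c_a(a\cdot w)>0$, since $w\notin\calF^\perp$ forces at least one strict inequality. This is exactly the step where the assumption $p\in\cone(\calF)^\circ$ (rather than $p$ on the boundary) is essential. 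Hence $G(v_n)\to\infty$ in both cases, the minimizer exists, and the orbit bijection is established.

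To upgrade to a homeomorphism, I would show $\pi_\calA$ is proper, since a continuous proper bijection between locally compact Hausdorff spaces is automatically a homeomorphism. Suppose $z_n\in Y_\calA$ with $\pi_\calA(z_n)$ bounded and $\|z_n\|\to\infty$. Set $w_n:=z_n/\|z_n\|$ and, passing to a subsequence, let $w_n\to w\in\RR^\calA_\geq$ with $\|w\|=1$. Then $\pi_\calA(w)=\lim\pi_\calA(z_n)/\|z_n\|=0$, so $\sum_a w_a a=\sum_a 0\cdot a$. Proposition~\ref{P:IATV} with $\lambda=w$ and $\mu=0$ yields $\prod_a z_a^{w_a}=1$ for every $z\in Y_\calA$, and substituting $z_n=\|z_n\|w_n$ gives
\[
\|z_n\|^{\sum_a w_a}\prod_a (w_n)_a^{w_a}\ =\ 1.
\]
Since $\sum_a w_a>0$ and the product over $\{a:w_a>0\}$ tends to a finite positive limit $\prod_{a:w_a>0}w_a^{w_a}$, the left side blows up, contradicting equality to $1$. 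So $\pi_\calA$ is proper, completing the proof.
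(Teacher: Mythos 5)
Your proof is correct. The paper does not actually write out a proof of Proposition~\ref{P:Birch} --- it cites Pachter--Sturmfels for the hyperplane/polytope version and Pir's thesis for the general cone version --- so what you have done is supply, in full, the argument those citations stand for. The first half (strict convexity of $F(v)=\sum_{a\in\calF}e^{-a\cdot v}$ on $N/\calF^\perp$, coercivity of $G=F+p\cdot(\cdot)$ for $p\in\cone(\calF)^\circ$, and bijectivity of $\Phi=-\nabla F$ onto $\cone(\calF)^\circ$) is the standard Legendre-duality proof of Birch's theorem, applied stratum-by-stratum via~\eqref{Eq:decomposition}; the two-case analysis of coerciveness using a positive representation $p=\sum_{a\in\calF}c_a a$ is exactly where relative interiority is needed, and you identify that correctly. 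The second half, properness, is precisely the ``modification for the general case'' the paper alludes to: when $\calA$ lies on a hyperplane the relevant domain $Z_\calA\subset\simplex^\calA$ is compact and a continuous bijection is automatically a homeomorphism, whereas the cone version requires one to rule out sequences escaping to infinity over a bounded image. Your contradiction --- normalize to a unit direction $w$ with $\pi_\calA(w)=0$, apply the binomial $\prod_a z_a^{w_a}=1$ from Proposition~\ref{P:IATV}, and note that the left-hand side grows like $\|z_n\|^{\sum_a w_a}$ --- is sound. One point worth making explicit, since the convention $0^s=0$ for $s>0$ could make the binomial read $0=1$: after passing to a subsequence with all $z_n$ in a fixed orbit $Y^\circ_\calF$, any $a\notin\calF$ has $(w_n)_a=(z_n)_a/\|z_n\|=0$ and hence $w_a=0$, so $\supp(w)\subset\calF$ and the product $\prod_a(z_n)_a^{w_a}$ never involves a term $0^{w_a}$ with $w_a>0$; with that observation the computation is airtight.
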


If $\calA$ lies on an affine hyperplane in $M$, this was shown in~\cite[Thm.~1.10]{ASCB}, and its proof can be modified
for the general case, see~\cite{Ata_Thesis} for details.
For another, independent proof, see~\cite[Thm.~26]{Krasauskas}.
By~\eqref{Eq:decomposition}, each orbit $Y^\circ_\calF$ is mapped
homeomorphically to the relative interior of $\cone(\calF)$. 

The results together show that $Y_\calA$ is a $T_N$-equivariant cell complex, with one cell for each face $F$ of
$\cone(\calA)$, were the $T_N$-action  realizes that  cell as $T_N/T_{F^\perp}\ (\simeq N/F^\perp)$.

\section{Irrational Toric Varieties From Fans}
\label{S:irrational}

While irrational affine toric varieties were developed before, the construction we give here of irrational toric varieties from irrational
fans is novel. 
We construct a $T_N$-equivariant cell complex $Y_\Sigma$ associated to a fan $\Sigma\subset N$, which we call an irrational
toric variety.
This follows the construction of the nonnegative part $X_\Sigma(\RR_\geq)$ of a toric variety at the end of
Section~\ref{S:classical}, and it yields this nonnegative part when $\Sigma$ is a rational fan.
For each cone $\sigma$ of $\Sigma$, we construct a topological space $V_\sigma$ with a $T_N$-action that is equivariantly
homeomorphic to an irrational affine toric variety $Y_\calA$, but not canonically.  
Given a face $\tau\subset\sigma$, there is a natural equivariant inclusion $V_\tau\hookrightarrow V_\sigma$.
The irrational toric variety $Y_\Sigma$ is constructed by gluing the irrational affine toric varieties $V_\sigma$ for
$\sigma$ a cone in $\Sigma$ along common subvarieties corresponding to smaller cones in $\Sigma$.

\subsection{Irrational affine toric varieties from cones}\label{SS:Affine_cone}
Let $C\subset M$ be a polyhedral cone.
Define \defcolor{$\Hom_c(C,\RR_\geq)$} to be the set of all monoid homomorphisms $\varphi\colon C\to\RR_\geq$ that are
continuous on the relative interior of each face of $C$.
We endow this with the weakest topology such that every evaluation at a point of $C$ is continuous.
That is, a sequence $\{\varphi_n\mid n\in\NN\}\subset\Hom_c(C,\RR_\geq)$ converges to
$\varphi\in\Hom_c(C,\RR_\geq)$ if and only if, for every $u\in C$, the sequence 
$\{\varphi_n(u)\mid n\in\NN\}$ of real numbers  converges to $\varphi(u)$.

\begin{example}\label{Ex:ray}
 Suppose that $M=\RR$ and $C=[0,\infty)$.
 Let $\varphi\in\Hom_c(C,\RR_\geq)$.
 Because $\varphi$ is a monoid homomorphism, $\varphi(0)=1$.
 Set $\defcolor{\alpha}:=\varphi(1)\geq 0$.
 As $\varphi$ is a monoid homomorphism, for every $n\in\NN$ with $n\geq 1$, we have $\varphi(n)=\alpha^n$.
 For $n\in\NN$, we have  $\varphi(1/n)=\alpha^{1/n}$, as 
\[
   \alpha\ =\ \varphi(1)\ =\ \varphi(n\cdot\tfrac{1}{n})\ =\ (\varphi(1/n))^n\,.
\]
 Similarly, if $r\in\QQ$ is positive, then $\varphi(r)=\alpha^r$.
 By the continuity of $\varphi$ on the interior $(0,\infty)$ of $C$, $\varphi(s)=\alpha^s$ for $s>0$.
 Here, if $\alpha=0$, then $\alpha^s=0$ and if $\alpha>0$, then $\alpha^s=\exp(s\log\alpha)$.
 If we further set $\alpha^0:=1$ for any $\alpha\geq 0$ (that is, $0^0=1$), then $\varphi(s)=\alpha^s$ for $s\geq 0$.

 For $\alpha\in[0,\infty)$ write \defcolor{$\varphi_\alpha$} for the monoid homomorphism such that 
 $\varphi_\alpha(s)=\alpha^s$. 
 Figure~\ref{F:graphs} shows the graphs of $\varphi_\alpha$ for several values of $\alpha$.
\begin{figure}[htb]
  \centering 
  \begin{picture}(232,89)(-18,-10)
    \put(-0.5,-0.5){\includegraphics{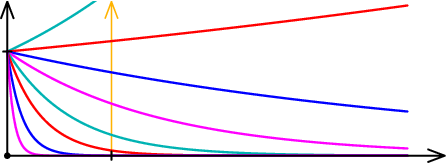}}
    \put(-8,49){$1$} \put(50,-10){$1$}  \put(57,65){$\alpha$}  \put(190,-10){$C$} 
    \put(-18,67){$\RR_\geq$}
  \end{picture}
  \caption{Graphs of $\varphi_\alpha\colon[0,\infty)\to\RR_\geq$ for several values of $\alpha\in[0,\infty)$.}
  \label{F:graphs}
\end{figure}
 For $s>0$, we have $\lim_{\alpha\to 0}\varphi_\alpha(s)=0$ while $\varphi_\alpha(0)=1$ for all $\alpha$, so
 that $\lim_{\alpha\to 0}\varphi_\alpha=\varphi_0$.
 Thus the evaluation map $\varphi\mapsto\varphi(1)$ induces a
 homeomorphism between $\Hom_c(C,\RR_\geq)$ and $[0,\infty)$.
 Restricting a map $\varphi\in \Hom_c(C,\RR_\geq)$ to $\NN\subset C$ gives a homeomorphism 
 between $\Hom_c(C,\RR_\geq)$ and $\Hom_{\mon}(\NN,\RR_\geq)$.\hfill$\diamond$
\end{example}

\begin{lemma}\label{L:HomFace}
 Let $C\subset M$ be a polyhedral cone.
 For any $\varphi\in\Hom_c(C,\RR_\geq)$, the set $\{u\in C\mid\varphi(u)>0\}$ is a face of $C$.
\end{lemma}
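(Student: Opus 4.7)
The plan is to identify $S := \{u \in C \mid \varphi(u) > 0\}$ with a single face of $C$ by analyzing which relative interiors $\tau^\circ$ of faces $\tau$ of $C$ meet $S$, and showing that the collection of such $\tau$ forms a principal ideal in the face lattice. First I would record basic multiplicative consequences. Because $\varphi$ is a monoid homomorphism, $\varphi(0) = 1$ (so $0 \in S$) and $\varphi(u+v) = \varphi(u)\varphi(v)$, so $S$ is closed under addition. Mimicking the calculation in Example~\ref{Ex:ray}, for any $u$ in the relative interior $\tau^\circ$ of a face $\tau$ of $C$ and any real $\lambda > 0$, one has $\varphi(\lambda u) = \varphi(u)^\lambda$: the identity holds for rational $\lambda$ directly from $\varphi(nu) = \varphi(u)^n$, and extends to real $\lambda$ by continuity of $\varphi$ on $\tau^\circ$, since $\lambda u$ stays in $\tau^\circ$.

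The heart of the argument is a dichotomy: for each face $\tau$ of $C$, either $\tau^\circ \subseteq S$ or $\tau^\circ \cap S = \emptyset$. Given $u_0 \in \tau^\circ \cap S$ and any $v_0 \in \tau^\circ$, the segment $u_\lambda := (1-\lambda)u_0 + \lambda v_0$ lies in $\tau^\circ$ for every $\lambda \in [0,1]$, and multiplicativity together with the scaling identity give
\[
\varphi(u_\lambda)\ =\ \varphi((1-\lambda)u_0)\,\varphi(\lambda v_0)\ =\ \varphi(u_0)^{1-\lambda}\,\varphi(v_0)^\lambda.
\]
If $\varphi(v_0) = 0$, then $\varphi(u_\lambda) = 0$ for every $\lambda \in (0,1]$ while $\varphi(u_0) > 0$, contradicting continuity of $\varphi$ on $\tau^\circ$. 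Hence $\varphi(v_0) > 0$.

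Set $\calG := \{\tau \mid \tau \text{ is a face of } C \text{ with } \tau^\circ \subseteq S\}$; the dichotomy yields $S = \bigsqcup_{\tau \in \calG} \tau^\circ$, so it suffices to show $\calG$ consists of exactly the sub-faces of a single face $F$ of $C$. I would argue that $\calG$ is closed under taking joins: if $\tau_1, \tau_2 \in \calG$ with $u_i \in \tau_i^\circ \cap S$, then $u_1 + u_2$ lies in the relative interior of the smallest face $\tau_1 \vee \tau_2$ containing $\tau_1 \cup \tau_2$, and $\varphi(u_1+u_2) = \varphi(u_1)\varphi(u_2) > 0$ places $\tau_1 \vee \tau_2$ in $\calG$. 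Finiteness of the face lattice then produces a greatest element $F \in \calG$. For downward closure, given any face $\tau \subseteq F$, pick $u \in \tau^\circ$ and $u_F \in F^\circ \cap S$; then $u_F + u \in F^\circ \subseteq S$ satisfies $\varphi(u_F+u) > 0$, and $\varphi(u_F) > 0$ forces $\varphi(u) > 0$. Therefore $\calG = \{\tau : \tau \subseteq F\}$ and $S = \bigsqcup_{\tau \subseteq F} \tau^\circ = F$, a face of $C$. I expect the dichotomy step to be the main obstacle, since it delicately combines multiplicativity of $\varphi$ (valid on all of $C$) with continuity only on each $\tau^\circ$; the remainder is bookkeeping in the finite face lattice.
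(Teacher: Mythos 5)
Your proof is correct and shares the paper's central idea: show that $S$ either contains or misses the relative interior $\tau^\circ$ of each face $\tau$ of $C$, so that $S$ is a union of open faces, and then identify that union as a single face. You diverge from the paper at both key steps, though. For the dichotomy, the paper argues purely algebraically: if $u\in\tau^\circ$ with $\varphi(u)=0$ and $w\in\tau^\circ$, choose $s>0$ small enough that $w-su\in\tau^\circ$; then $\varphi(w)=\varphi(w-su)\,\varphi(su)=0$ since $\varphi(su)=\varphi(u)^s=0$. You instead parametrize a segment $u_\lambda$ inside $\tau^\circ$ and pass to the limit $\lambda\to 0^+$, invoking continuity on $\tau^\circ$ a second time; this is valid but slightly heavier than needed, since continuity is already spent deriving the scaling identity $\varphi(su)=\varphi(u)^s$. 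For the final identification, the paper observes that $S$ is a convex cone (closed under sums and $\RR_\geq$-scaling), shows that $\tau\subset S$ (not merely $\tau^\circ\subset S$) whenever $S$ meets $\tau^\circ$, and then quotes the fact that a convex union of faces of a polyhedral cone is a face. You instead work directly in the face lattice, verifying that the set $\calG$ of faces with relative interior in $S$ is closed under joins and under passing to subfaces, hence a principal order ideal with top element $F$, so that $S=F$. Both routes are sound; yours makes the face-lattice combinatorics explicit, while the paper's is shorter because it outsources that combinatorics to a standard convexity fact.
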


We will call this face the \demph{support} of $\varphi$ and write \defcolor{$\supp(\varphi)$} for this face.

\begin{proof}
 As in Example~\ref{Ex:ray}, if $s\in\RR_\geq$ and $u\in C$, then
 $\varphi(su)=(\varphi(u))^s$. 
 Suppose that $u$ lies in the relative interior \defcolor{$\tau^\circ$} of a face $\tau$ of $\sigma$ and $\varphi(u)=0$.
 Then $\varphi(su)=0$ for $s>0$.
 For $w\in\tau^\circ$, there is an $s>0$ with 
 $w-su\in\tau^\circ$, and so $\varphi(w)=\varphi(w-su)\varphi(su)=0$.

 Let $S:=\{u\in C\mid\varphi(u)>0\}$, which is closed under sum and multiplication by $\RR_\geq$, so it is a convex
 cone. 
 If $S$ meets the relative interior  $\tau^\circ$ of a face $\tau$ of $C$, then it contains 
 $\tau^\circ$, by the previous arguments.
 In fact, $\tau\subset S$.
 Indeed, if $u\in\tau$ and $w\in\tau^\circ$, then $w+u\in\tau^\circ$ so that
\[
   0\ \neq\ \varphi(w+u)\ =\ \varphi(w)\varphi(u)\,,
\]
 so that $\varphi(u)\neq 0$ and thus $u\in S$.
 A convex union of faces of a polyhedral cone is a face of that cone, which completes the proof.
\end{proof}

\begin{example}\label{Ex:LinSpace}
 If $L$ is a linear subspace of $M$, then for any $\varphi\in\Hom_c(L,\RR_\geq)$ and $x\in L$, we have that 
 $\varphi(x)>0$.
 Indeed, $-x\in L$ so $1=\varphi(0)=\varphi(x+(-x))=\varphi(x)\varphi(-x)$.
 In particular, $\Hom_c(L,\RR_\geq)=\Hom_c(L,\RR_>)$, which is homeomorphic to 
 $T_{L^\vee}=T_N/T_{L^\perp}\simeq N/{L^\perp}$.\hfill$\diamond$
\end{example}

Applying $\Hom_c(-,\RR_>)$ to the map $C\to M\oplus C$ given by $u\mapsto (u,u)$ induces a map
 \begin{equation}\label{Eq:T-action}
  \defcolor{\mu}\ \colon\ \Hom_c(M,\RR_>)\ \times\ \Hom_c(C,\RR_\geq)\ \longmapsto\ \Hom_c(C,\RR_\geq)\,,
 \end{equation}
written $\mu(t,\varphi)= t.\varphi$, which is the map defined at $u\in C$ by $t.\varphi(u)=t(u)\varphi(u)=t^u\varphi(u)$.
This gives a continuous action of $T_N=\Hom_c(M,\RR_>)$ on $\Hom_c(C,\RR_\geq)$.

\begin{lemma}\label{L:HomIsAffine}
 Let $\calA\subset\calM$ be finite and set $C:=\cone(\calA)$.
 Then the map $f_\calA\colon\Hom_c(C,\RR_\geq)\mapsto \RR^\calA_\geq$ given by
\[
   f_\calA\ \colon\ \varphi\ \longmapsto\ (\varphi(a)\,\mid\, a\in\calA)\,,
\]
 is a $T_N$-equivariant homeomorphism between $\Hom_c(C,\RR_\geq)$ and the irrational affine toric variety $Y_\calA$.
 In particular, $\Hom_c(C,\RR_\geq)$ is homeomorphic to $C$ under the map 
 $\varphi\mapsto \sum_{a\in\calA}\varphi(a) a$.
\end{lemma}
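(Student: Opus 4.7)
The plan is to verify well-definedness, equivariance, bijectivity, and bicontinuity of $f_\calA$, then deduce the ``In particular'' statement by composing with Birch's map from Proposition~\ref{P:Birch}. Equivariance is immediate from the definitions: $f_\calA(t.\varphi)_a = t^a\varphi(a)$, which matches the restriction to $Y_\calA$ of the standard $T_N$-action on $\RR^\calA_\geq$. To see that $f_\calA(\varphi)\in Y_\calA$, I would invoke Proposition~\ref{P:IATV}: whenever $\sum\lambda_a a=\sum\mu_a a$, the homomorphism property of $\varphi$ combined with the identity $\varphi(\lambda u)=\varphi(u)^\lambda$ for $\lambda\geq 0$ from Example~\ref{Ex:ray} (with $0^0:=1$) yields
\[
   \prod_{a\in\calA}\varphi(a)^{\lambda_a}\ =\ \varphi\Bigl(\sum\lambda_a a\Bigr)\ =\ \varphi\Bigl(\sum\mu_a a\Bigr)\ =\ \prod_{a\in\calA}\varphi(a)^{\mu_a}\,.
\]

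The key observation, yielding both injectivity and a candidate inverse, is that any $\varphi\in\Hom_c(C,\RR_\geq)$ is determined by $\varphi|_\calA$: for $u=\sum_{a\in\calA}\lambda_a a\in C$ with $\lambda_a\geq 0$, the same two facts give $\varphi(u)=\prod_{a\in\calA}\varphi(a)^{\lambda_a}$. For surjectivity, given $z\in Y_\calA$, I would define $\varphi(u):=\prod_{a\in\calA}z_a^{\lambda_a}$ for any representation $u=\sum\lambda_a a$. Independence of the representation follows from the binomial equations characterizing $Y_\calA$ in Proposition~\ref{P:IATV}; the resulting map is a monoid homomorphism by construction; and by~\eqref{Eq:decomposition} there is a unique face $\calF\subset\calA$ with $z\in Y^\circ_\calF$, so that $\supp(\varphi)=\cone(\calF)$ in the sense of Lemma~\ref{L:HomFace} and continuity on each relative interior of a face of $C$ reduces to continuity of a monomial on an orbit of the form $T_N/T_{\cone(\calF)^\perp}$, as in Example~\ref{Ex:LinSpace}.

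Continuity of $f_\calA$ is automatic: each coordinate is an evaluation $\varphi\mapsto\varphi(a)$, which is continuous by the very definition of the topology on $\Hom_c(C,\RR_\geq)$. Conversely, if $f_\calA(\varphi_n)\to f_\calA(\varphi)$ in $\RR^\calA_\geq$, then for every $u=\sum\lambda_a a\in C$,
\[
  \varphi_n(u)\ =\ \prod_{a\in\calA}\varphi_n(a)^{\lambda_a}\ \longrightarrow\ \prod_{a\in\calA}\varphi(a)^{\lambda_a}\ =\ \varphi(u)\,,
\]
by joint continuity of multiplication on $\RR_\geq$ and of $\alpha\mapsto\alpha^\lambda$ on $\RR_\geq$ (again with $0^0:=1$); thus $\varphi_n\to\varphi$ in $\Hom_c(C,\RR_\geq)$, proving continuity of $f_\calA^{-1}$. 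The final homeomorphism with $C$ is then the composition $\pi_\calA\circ f_\calA$, which sends $\varphi$ to $\sum_{a\in\calA}\varphi(a) a$ and is a homeomorphism by Proposition~\ref{P:Birch}.

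The main obstacle is the surjectivity step: one must verify that the formula $\varphi(u):=\prod z_a^{\lambda_a}$, built from an arbitrary cone representation of $u$, is well-defined independent of the representation, respects the monoid structure, and satisfies the delicate face-by-face continuity required by the definition of $\Hom_c(C,\RR_\geq)$. Once this is in hand, the other pieces are short verifications using the formula $\varphi(u)=\prod\varphi(a)^{\lambda_a}$.
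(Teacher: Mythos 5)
Your proposal is correct and takes essentially the same route as the paper: both directions rest on the identity $\varphi\bigl(\sum\lambda_a a\bigr)=\prod_{a}\varphi(a)^{\lambda_a}$ together with the binomial characterization from Proposition~\ref{P:IATV}, the orbit decomposition~\eqref{Eq:decomposition} is used to pin down the support face $\cone(\calF)$ of the inverse map, and the final identification with $C$ is Birch's theorem composed with $f_\calA$. Your explicit argument for continuity of $f_\calA^{-1}$ (pointwise convergence via the power-and-product formula with $0^0=1$) spells out a step the paper leaves implicit, but it is not a different method.
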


Observe that this homeomorphism depends on the choice of a generating set $\calA$ for $C$.

\begin{proof}
 Let $\varphi\in\Hom_c(C,\RR_\geq)$ and suppose that $u\in C$ has two representations as a nonnegative 
 combination of elements of $\calA$, $u=\sum_a\lambda_a a=\sum_a \mu_a a$ for $\lambda,\mu\in\RR^\calA_\geq$.
 Since
 \begin{equation}\label{Eq:HomIsAffine}
   \varphi(u)\ =\ \varphi\Bigl(\sum \lambda_a a\Bigr)\ =\ \prod_{a\in\calA} (\varphi(a))^{\lambda_a}\ ,
 \end{equation}
 $f_\calA(\varphi)$ satisfies the equations~\eqref{Eq:binomials}, and therefore $f_\calA(\varphi)$ lies in 
 $Y_\calA$, by Proposition~\ref{P:IATV}.

 Conversely, let $z\in Y_\calA$.
 Then by~\eqref{Eq:HomIsAffine} and the equations~\eqref{Eq:binomials} for $Y_\calA$, the function
 $\defcolor{\varphi_z}\colon\calA\to\RR_\geq$ defined by 
 $\varphi_z(a)=z_a$ extends to a monoid homomorphism $C=\cone \calA \to\RR_\geq$.
 By the decomposition~\eqref{Eq:decomposition}, there is a face $\calF$ of $\calA$ such that $z\in Y^\circ_\calF$.
 Then $\varphi_z$ is continuous and nonvanishing on $\defcolor{F}:=\cone(\calF)$.
 We show that  $\varphi_z$ is identically zero on $C\smallsetminus F$, and thus it lies in $\Hom_c(C,\RR_\geq)$.

 Note that if $b\in\calA{\smallsetminus}\calF$, then $z_b=0$ and thus $\varphi_z(b)=0$.
 If $u\in C{\smallsetminus} F$, then in any expression  $u=\sum_{a\in\calA} \lambda_a a$ with $\lambda_a\geq 0$, 
 there is some $b\in\calA{\smallsetminus}\calF$ with $\lambda_b>0$.
 Thus
\[
   \varphi_z(u)\ =\ \bigl(\varphi_z(b)\bigr)^{\lambda_b}\, \varphi_z 
       \Bigl(\, \sum_{a\in\calA\smallsetminus\{b\}}\lambda_a a\Bigr)\ =\ 0\,.
\]

 The maps $\varphi\mapsto f_\calA(\varphi)$ and $z\mapsto \varphi_z$ are inverse bijections
 which are continuous and therefore homeomorphisms.
 The given formulas show that they are $T_N$-equivariant.
 The identification between $\Hom_c(C,\RR_\geq)$ and $C$ now follows by Birch's Theorem (Proposition~\ref{P:Birch}).
\end{proof}

\begin{example}\label{Ex:ConeEx}
 Let $C:=\cone\{a,b\}$, where $\defcolor{a}=(-\sqrt{2},1)$ and $\defcolor{b}=(1,0)$.
 Elements of $C$ have unique expressions as nonnegative combinations of $a$ and $b$, so that a map
 $\varphi\in\Hom_c(C,\RR_\geq)$ is determined by its values $\varphi(a)$ and $\varphi(b)$, which may be any two nonnegative
 numbers.
 Thus the map $\defcolor{\psi}\colon\varphi\mapsto \varphi(a) a + \varphi(b)b$ is a homeomorphism between 
 $\Hom_c(C,\RR_\geq)$  and $C$.

 Adding a generator $\defcolor{c}=(1,1)$ of $C$, if $\varphi\in\Hom_c(C,\RR_\geq)$, 
 then $\varphi(c)=\varphi(a)\varphi(b)^{1+\sqrt{2}}$, as we have $c = a+(1{+}\sqrt{2})b$.
 By Lemma~\ref{L:HomIsAffine}, $\varphi\mapsto(\varphi(a),\varphi(b),\varphi(c))$ is a homeomorphism
 $\Hom_c(C,\RR_\geq)\xrightarrow{\sim}Y_{\{a,b,c\}}$.
 Composing with the tautological map $\pi_{\{a,b,c\}}$ gives 
\[
   \varphi\ \longmapsto\  \varphi(a) a + \varphi(b)b + \varphi(c)c\ =\  
    \varphi(a) a + \varphi(b)b + \varphi(a)\varphi(b)^{1+\sqrt{2}} c\,,
\]
 which is a  homeomorphism between  $\Hom_c(C,\RR_\geq)$  and $C$ that is different from $\psi$.
 \begin{figure}[htb]
 \[
   \begin{picture}(187,90)(0,-6)
    \put(0,0){\includegraphics[height=84pt]{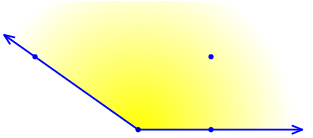}}
    \put(15,40){$a$}    \put(127,10){$b$}    \put(127,40){$c$}   \put(81,-8){$0$}
    \put(38,66){$C=\cone\{a,b,c\}$}
   \end{picture}   
     \qquad\quad
   \begin{picture}(149,90)
    \put(0,0){\includegraphics[height=90pt]{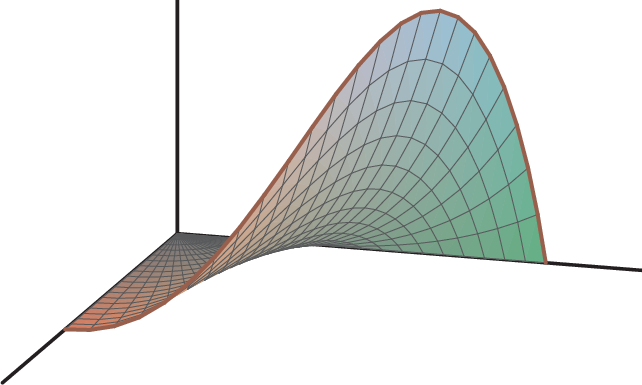}}
    \put(0,8){$a$}    \put(139,16){$b$}    \put(44,82){$c$}   
    \put(117,77){$Y_{\{a,b,c\}}$}
   \end{picture}
 \]
 \caption{Cone and irrational affine toric variety from Example~\ref{Ex:ConeEx}.}
 \label{F:IATV}
\end{figure}
 This illustrates that the homeomorphism depends upon the choice of generators.
 Figure~\ref{F:IATV} shows the cone $C$ and the irrational affine toric variety $Y_{\{a,b,c\}}$.\hfill$\diamond$
\end{example}

In Lemma~\ref{L:HomFace} and in the proof of Lemma~\ref{L:HomIsAffine}, faces $F$ of the cone $C$
correspond to monoid homomorphisms $\varphi$ that vanish on $C{\smallsetminus}F$ and are nonzero on $F$.
For a face $F$ of $C$, let \defcolor{$\langle F\rangle$} be its linear span.
Recall that $\Hom_c(\langle F\rangle,\RR_\geq)=\Hom_c(\langle F\rangle, \RR_>)$, which  is a single 
$T_N$-orbit isomorphic to $T_N/T_{F^\perp}$, where $\defcolor{F^\perp}\subset N$ is the annihilator of
$F$. 
Write $\defcolor{\varepsilon_F}\in\Hom_c(\langle F\rangle, \RR_>)$ for the constant homomorphism, $\varepsilon_F(u)=1$ for
all $u\in\langle F\rangle$.
Then $\Hom_c(\langle F\rangle, \RR_>)=T_N.\varepsilon_F$.
Restriction to $F$ followed by extension by 0 to the rest of $C$ gives a $T_N$-equivariant
map $\Hom_c(\langle F\rangle, \RR_>) \to \Hom_c(C,\RR_\geq)$ which sends the constant map $\varepsilon_F$ to the element 
of $\Hom_c(C,\RR_\geq)$ (still written $\varepsilon_F$) whose value at $u\in C$ is
\[
    \varepsilon_F(u)\ =\ \left\{
     \begin{array}{rcl} 1&\ & u\in F\\ 0&&u\in C\smallsetminus F\end{array}\right.\ .
\]
Set $\defcolor{\calO_F}:=T_N.\varepsilon_F\subset\Hom_c(C,\RR_\geq)$, the orbit through $\varepsilon_F$, which is
the image of $\Hom_c(\langle F\rangle, \RR_>)$.

\begin{corollary}
  For any face $F$ of $C$, $\calO_F$ consists of those homomorphisms in $\Hom_c(C,\RR_\geq)$ that vanish on 
  $C{\smallsetminus}F$ and are nonzero on $F$.
  The map  $\Hom_c(\langle F\rangle, \RR_>) \to \Hom_c(C,\RR_\geq)$ is an inclusion with image $\calO_F$, and 
  we have the orbit decomposition
 \begin{equation}\label{Eq:orbit_decomposition}
  \Hom_c(C,\RR_\geq)\ =\ \bigsqcup_{\mbox{\scriptsize  $F$ a face of $C$}} \calO_F\ .
 \end{equation}
\end{corollary}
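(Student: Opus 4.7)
The plan is to partition $\Hom_c(C,\RR_\geq)$ by support using Lemma~\ref{L:HomFace} and then identify each part with a single $T_N$-orbit via the extension-by-zero construction.

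First I would observe that the support is $T_N$-invariant: if $\varphi \in \Hom_c(C,\RR_\geq)$ and $t \in T_N$, then $(t.\varphi)(u) = t^u \varphi(u)$, and since $t^u > 0$ for every $u \in C$, the set $\{u : (t.\varphi)(u) > 0\}$ equals $\supp(\varphi)$. Combined with Lemma~\ref{L:HomFace}, this gives a partition
\[
  \Hom_c(C,\RR_\geq)\ =\ \bigsqcup_{\text{$F$ a face of $C$}} \{\varphi : \supp(\varphi) = F\}\,,
\]
where each piece is $T_N$-stable. The remaining task is to show that the piece for $F$ is exactly a single orbit, and that it coincides with the image of $\Hom_c(\langle F\rangle,\RR_>)$ under the extension-by-zero map.

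Next I would analyze the extension-by-zero map $\iota_F \colon \Hom_c(\langle F\rangle,\RR_>) \to \Hom_c(C,\RR_\geq)$. Given $\psi \in \Hom_c(\langle F\rangle,\RR_>)$, define $\iota_F(\psi)$ to be $\psi$ on $F$ and $0$ on $C\smallsetminus F$. The monoid homomorphism property follows by cases: on sums within $F$ it holds because $\psi$ is a homomorphism, and whenever $u$ or $u+v$ leaves $F$, the relevant values vanish (using that $F$ is a face, so $u+v \in F$ forces $u,v \in F$ for $u,v \in C$). Continuity on each relative interior of a face of $C$ is immediate: on faces inside $F$ it follows from continuity of $\psi$ on $\langle F\rangle \supset F$, and on faces not contained in $F$ the map is identically zero. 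Thus $\iota_F$ lands in $\Hom_c(C,\RR_\geq)$ and is $T_N$-equivariant, with $\iota_F(\varepsilon_F) = \varepsilon_F$ by construction.

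I would then establish that $\iota_F$ is a bijection onto $\{\varphi : \supp(\varphi) = F\}$. Injectivity is clear, since restriction to $F$ recovers $\psi|_F$, and a continuous monoid homomorphism $F \to \RR_>$ extends uniquely to $\langle F \rangle$ via $\psi(u - v) := \psi(u)/\psi(v)$. For surjectivity, given $\varphi$ with $\supp(\varphi) = F$, the restriction $\varphi|_F$ is a continuous monoid homomorphism $F \to \RR_>$, which extends uniquely to some $\psi \in \Hom_c(\langle F\rangle,\RR_>)$, and $\iota_F(\psi) = \varphi$ because $\varphi$ vanishes on $C \smallsetminus F$. By Example~\ref{Ex:LinSpace}, $\Hom_c(\langle F\rangle,\RR_>) = T_N.\varepsilon_F$, so the $T_N$-equivariant bijection $\iota_F$ identifies this orbit with the piece of support $F$, which is therefore $\calO_F = T_N.\varepsilon_F$. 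The orbit decomposition~\eqref{Eq:orbit_decomposition} then follows from the support partition.

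The only step that needs care is verifying the monoid-homomorphism property of $\iota_F(\psi)$ across the boundary of $F$, specifically the implication that if $u + v \in F$ for $u, v \in C$ then $u, v \in F$; this uses the definition of $F$ as a face (supporting hyperplane argument) and is the one place the combinatorics of faces of $C$ enters essentially.
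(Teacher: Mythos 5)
Your proof is correct, and it takes a genuinely more intrinsic route than the paper for the decomposition~\eqref{Eq:orbit_decomposition}. Where the paper obtains the orbit decomposition by transporting the decomposition~\eqref{Eq:decomposition} of $Y_\calA$ across the equivariant homeomorphism of Lemma~\ref{L:HomIsAffine} (which requires choosing a generating set $\calA$ for $C$), you partition $\Hom_c(C,\RR_\geq)$ directly by support using Lemma~\ref{L:HomFace}, note that support is $T_N$-invariant, and then identify each support-level set with a single orbit via the extension-by-zero map. This is cleaner in that it never passes through the coordinate embedding $f_\calA$ and so avoids any choice of generators. For the first two assertions your argument is close to the paper's (restrict to $F$, extend uniquely to the group $\langle F\rangle$, compare with the definition of $\calO_F$), but you supply details the paper leaves implicit: that the extension-by-zero map actually lands in $\Hom_c(C,\RR_\geq)$, with the verification of the homomorphism property at the boundary of $F$ reduced to the fact that $u+v\in F$ with $u,v\in C$ forces $u,v\in F$ (the defining property of a face), and the verification of continuity on the relative interior of each face of $C$. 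One small citation slip: the identity $\Hom_c(\langle F\rangle,\RR_>)=T_N.\varepsilon_F$ is stated in the paragraph preceding the corollary rather than in Example~\ref{Ex:LinSpace} itself, though that example supplies the key ingredient. Otherwise the argument is complete and sound.
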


\begin{proof}
 By definition of the map $\Hom_c(\langle F\rangle, \RR_>) \to \Hom_c(C,\RR_\geq)$, its image consists of homomorphisms that
 vanish on $C{\smallsetminus} F$ and are nonzero on $F$.
 Suppose that $\varphi\in\Hom_c(C,\RR_\geq)$ vanishes on $C{\smallsetminus} F$ and is nonzero on $F$.
 If we restrict $\varphi$ to $F$, it is a monoid homomorphism from $F$ to the group $\RR_>$ and therefore has a unique
 extension to $\langle F\rangle$.
 Thus $\varphi$ lies in the image of the map $\Hom_c(\langle F\rangle, \RR_>) \to \Hom_c(C,\RR_\geq)$, proving
 the first assertion.
 The map is an injection, as an element of $\Hom_c(\langle F\rangle, \RR_>)$ is determined by its restriction to $F$.

 Finally, the decomposition~\eqref{Eq:orbit_decomposition} follows
 from~\eqref{Eq:decomposition} and Lemma~\ref{L:HomIsAffine}.
\end{proof}

Let $v$ be an element of $C^\vee\subset N$ that exposes the face $F$ of $C$, that is, $v^\perp\cap C=F$ and for 
$u\in C{\smallsetminus} F$, $u\cdot v>0$.
Recall the multiplicative homomorphism $\gamma_v$ for $v\in N$ defined in~\eqref{Eq:gammav} and the resulting identification of $T_N$ with
$\Hom_c(M,\RR_>)$. 
For $s\in \RR$, we have the element $\gamma_{sv}\in T_N$ whose value at $u\in M$ is $\gamma_{sv}(u)=\exp(-su\cdot v)$.
The map $\RR\to T_N$ defined by $s\mapsto \gamma_{sv}$ is a \demph{one-parameter subgroup} of $T_N$.
Recall that $\varepsilon_C$ is the constant map on $C$, taking the value $1$ at every point $u\in C$.

\begin{lemma}\label{L:limitOfDistinguishedPoints}
 With these definitions, we have
 \begin{enumerate}
  \item  $\varepsilon_F=\lim_{s\to \infty} \gamma_{sv}.\varepsilon_C$.
  \item  If $F$ is a face of $E$ and both are faces of $C$, then 
          $\varepsilon_F=\lim_{s\to \infty} \gamma_{sv}.\varepsilon_E$.
 \end{enumerate}
\end{lemma}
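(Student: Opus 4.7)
The plan is to argue both statements by pointwise convergence, since the topology on $\Hom_c(C,\RR_\geq)$ is by definition the weakest one making every evaluation continuous; so it suffices to check that for each fixed $u\in C$, the real number $(\gamma_{sv}.\varphi)(u)$ tends to $\varepsilon_F(u)$ as $s\to\infty$, where $\varphi$ is either $\varepsilon_C$ or $\varepsilon_E$. The computation unwinds immediately from the definition of the $T_N$-action in~\eqref{Eq:T-action}, namely $(\gamma_{sv}.\varphi)(u)=\exp(-s\,u\cdot v)\cdot\varphi(u)$, together with the defining property of an exposing vector.

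For part (1), since $\varepsilon_C(u)=1$ for every $u\in C$, we have $(\gamma_{sv}.\varepsilon_C)(u)=\exp(-s\,u\cdot v)$. I would split into two cases using the hypothesis $v^\perp\cap C=F$: if $u\in F$ then $u\cdot v=0$, so the value is identically $1$ in $s$ and the limit is $1=\varepsilon_F(u)$; if $u\in C\smallsetminus F$ then $u\cdot v>0$ (since $v\in C^\vee$ so $u\cdot v\geq 0$, with equality only on $F$), so the limit is $0=\varepsilon_F(u)$. This gives pointwise convergence on all of $C$ and hence the desired limit in $\Hom_c(C,\RR_\geq)$.

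For part (2), the only extra point is to understand $\varepsilon_E$ viewed inside $\Hom_c(C,\RR_\geq)$ via extension by zero: $\varepsilon_E(u)=1$ for $u\in E$ and $\varepsilon_E(u)=0$ for $u\in C\smallsetminus E$. Then $(\gamma_{sv}.\varepsilon_E)(u)=\exp(-s\,u\cdot v)\cdot\varepsilon_E(u)$ is identically $0$ for $u\in C\smallsetminus E\subset C\smallsetminus F$, matching $\varepsilon_F(u)=0$ for all $s$. For $u\in E$ the argument is the same as in (1), but I must know that $u\cdot v>0$ whenever $u\in E\smallsetminus F$. This is the one point requiring a small observation, and it is the only real content: because $F$ is a face of $E$ and $E\subset C$, we have $v^\perp\cap E\subset v^\perp\cap C=F$, so any $u\in E\smallsetminus F$ satisfies $u\cdot v\neq 0$; combined with $v\in C^\vee$ and $u\in C$ this forces $u\cdot v>0$, and the limit at $u$ is $0=\varepsilon_F(u)$.

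I expect no real obstacle: the entire proof is a pointwise-limit calculation, and the only substantive step is the inclusion $v^\perp\cap E\subset F$ that transfers the exposing property of $v$ from $C$ to the face $E$. Once that is noted, both (1) and (2) reduce to the same three-case analysis ($u\cdot v=0$, $u\cdot v>0$ with $\varphi(u)=1$, and $\varphi(u)=0$) and the conclusion follows from the pointwise topology on $\Hom_c(C,\RR_\geq)$.
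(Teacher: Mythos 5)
Your proof is correct and follows the same pointwise-evaluation argument as the paper, which proves (1) explicitly and dismisses (2) with ``nearly the same argument.'' Your added observation that $v^\perp\cap E\subset v^\perp\cap C=F$ is precisely the small detail the paper leaves to the reader, and you have supplied it correctly.
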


\begin{proof}
 If $\gamma_{sv}.\varepsilon_C$ has a limit as $s\to\infty$ in $\Hom_c(C,\RR_\geq)$, then its value at $u\in C$ is 
\[
   \lim_{s\to\infty} (\gamma_{sv}.\varepsilon_C) (u)\ =\ 
   \lim_{s\to\infty} \gamma_{sv}(u) \varepsilon_C(u)\ =\ 
   \lim_{s\to\infty} \exp(-s\, u\cdot v)\ =\ 
   \left\{\begin{array}{rcl} 0&\ & u\not\in F\\ 1&& u\in F\end{array}\right.\ ,
\]
 which is $\varepsilon_F$.
 (We are using that $\varepsilon_C(u)=1$.)
 The last equality is because as $v\in C^\vee$, $u\cdot v$ is nonnegative and the limit equals zero when $u\cdot v\neq 0$
 and it equals 1 when $u\cdot v= 0$.
 This proves (1).
 Nearly the same argument proves (2).
\end{proof}

By Lemma~\ref{L:limitOfDistinguishedPoints}(1), $\varepsilon_F\in\overline{\calO_C}$.
As $\calO_F=T_N.\varepsilon_F$, we deduce the following.

\begin{corollary}\label{Cor:orbits}
  If $F\subset E$ are faces of the cone $C$, then $\calO_F\subset\overline{\calO_E}$.
  In particular, $\calO_C$ is dense in $\Hom_c(C,\RR_\geq)$, and if $E$ is a face of $C$, then 
\[
    \overline{\calO_E}\ =\ \bigsqcup_{F\mbox{\scriptsize\ a face of }E} \calO_F\ .
\]
\end{corollary}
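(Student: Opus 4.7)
The plan is to bootstrap everything from Lemma~\ref{L:limitOfDistinguishedPoints} together with the orbit decomposition~\eqref{Eq:orbit_decomposition} and the observation that the closure of any $T_N$-invariant set is $T_N$-invariant.

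First I would prove the containment $\calO_F \subset \overline{\calO_E}$ for $F \subset E$ faces of $C$. Pick $v \in C^\vee$ exposing $F$ inside $E$; Lemma~\ref{L:limitOfDistinguishedPoints}(2) gives $\varepsilon_F = \lim_{s\to\infty}\gamma_{sv}.\varepsilon_E$, so $\varepsilon_F \in \overline{\calO_E}$. Because $\calO_E$ is a $T_N$-orbit its closure is $T_N$-invariant, and $\calO_F = T_N.\varepsilon_F \subset \overline{\calO_E}$. Applied with $E=C$ and ranging over all faces $F$ of $C$, this together with~\eqref{Eq:orbit_decomposition} yields that $\overline{\calO_C} \supset \bigsqcup_F \calO_F = \Hom_c(C,\RR_\geq)$, proving density.

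For the final equality, the inclusion $\supset$ is immediate: any face $F$ of $E$ is itself a face of $C$ contained in $E$, so the first part gives $\calO_F \subset \overline{\calO_E}$. For $\subset$, I would take $\varphi \in \overline{\calO_E}$; by~\eqref{Eq:orbit_decomposition} it lies in $\calO_{F'}$ for a unique face $F'$ of $C$, and I must show $F' \subset E$ (hence $F'$ is a face of $E$, since a face of $C$ contained in $E$ is automatically a face of $E$). Choose a net $\varphi_\alpha \in \calO_E$ with $\varphi_\alpha \to \varphi$ in the topology of pointwise convergence on $C$. For any $u \in F'$ we have $\varphi(u) = \lim \varphi_\alpha(u) > 0$, so $\varphi_\alpha(u) > 0$ eventually; but every $\varphi_\alpha$ has support equal to $E$, so $u \in E$. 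Hence $F' \subset E$.

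The main obstacle is this last inclusion: the $T_N$-invariance argument used in the first part does not by itself force the support of a limit to shrink to a face of $E$ rather than some unrelated face of $C$. The crucial input is the pointwise nature of the topology on $\Hom_c(C,\RR_\geq)$, which allows positivity of $\varphi$ on $F'$ to be pulled back to eventual positivity of the $\varphi_\alpha$, combined with the characterization of orbits by their supports from Lemma~\ref{L:HomFace}. Everything else is routine bookkeeping with faces of cones.
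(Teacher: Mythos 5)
Your proof is correct and takes the same approach as the paper, which sketches the argument via Lemma~\ref{L:limitOfDistinguishedPoints} and the $T_N$-invariance of closures of orbits. You have also correctly filled in two details the paper leaves implicit: the need for part~(2) of that lemma to handle general $F\subset E$ (the paper's prelude only cites part~(1)), and the reverse inclusion $\overline{\calO_E}\subset\bigsqcup_{F}\calO_F$, which you establish cleanly from the pointwise-convergence topology on $\Hom_c(C,\RR_\geq)$ together with the support characterization of orbits from Lemma~\ref{L:HomFace}.
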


\subsection{Irrational toric varieties from fans}\label{SS:fans}
Let $\Sigma\subset N$ be a fan.
For a cone $\sigma\in\Sigma$, define $\defcolor{V_\sigma}:=\Hom_c(\sigma^\vee,\RR_\geq)$, the irrational affine toric variety
as in \S~\ref{SS:Affine_cone} associated to the dual cone $\sigma^\vee$ of $\sigma$.
Suppose that $\tau$ is a face of $\sigma$.
Then the inclusion $\sigma^\vee\subset\tau^\vee$ induces a map $V_\tau\to V_\sigma$ by 
restricting a monoid homomorphism on $\tau^\vee$ to $\sigma^\vee$.

\begin{lemma}\label{L:inclusion}
 The map $V_\tau\to V_\sigma$ is a $T_N$-equivariant inclusion. 
\end{lemma}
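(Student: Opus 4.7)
The statement asserts three things about the restriction map $V_\tau\to V_\sigma$: it is injective, it is $T_N$-equivariant, and it is a homeomorphism onto its image. Equivariance and continuity in one direction are immediate from the definitions: the action $\mu$ in~\eqref{Eq:T-action} multiplies a homomorphism $\varphi$ pointwise by the character $t^{(\cdot)}$, and both $V_\tau$ and $V_\sigma$ carry the topology of pointwise convergence, so restriction to the sub-monoid $\sigma^\vee\subset\tau^\vee$ commutes with both the action and with limits. So the real content lies in injectivity and in the continuity of the inverse on the image.

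The plan is to reduce both of these to Proposition~\ref{P:coneFact}. Given any $w\in\tau^\vee$, that proposition produces $u,\ell\in\sigma^\vee$ with $\ell\in\tau^\perp$ such that $w=u-\ell$. Rewriting as $u=w+\ell$ inside the monoid $\tau^\vee$ and applying $\varphi\in V_\tau$ yields
\[
   \varphi(u)\ =\ \varphi(w)\,\varphi(\ell)\,.
\]
Because $\ell$ lies in the lineality space $\tau^\perp$ of $\tau^\vee$, which is a linear subspace, the argument of Example~\ref{Ex:LinSpace} forces $\varphi(\ell)>0$ for every $\varphi\in\Hom_c(\tau^\vee,\RR_\geq)$. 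Hence $\varphi(w)=\varphi(u)/\varphi(\ell)$, and the right-hand side depends only on the restriction of $\varphi$ to $\sigma^\vee$. This proves injectivity.

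The same formula yields continuity of the inverse on the image. If a net $\varphi_n|_{\sigma^\vee}\to\varphi|_{\sigma^\vee}$ converges in $V_\sigma$, then pointwise convergence on $\sigma^\vee$ gives $\varphi_n(u)\to\varphi(u)$ and $\varphi_n(\ell)\to\varphi(\ell)>0$, so $\varphi_n(w)\to\varphi(w)$ for every $w\in\tau^\vee$. Thus the restriction map sends pointwise-convergent nets to pointwise-convergent nets in both directions and is a homeomorphism onto its image.

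The only genuine obstacle is recognizing that Proposition~\ref{P:coneFact} is designed for exactly this purpose: placing the auxiliary term $\ell$ in the lineality space $\tau^\perp$ is precisely what makes $\varphi(\ell)$ nonvanishing and permits us to recover $\varphi(w)$ from the restriction. Once this is seen, the remaining steps are routine unwinding of the definitions of the action $\mu$ and the pointwise topology.
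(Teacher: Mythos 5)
Your proof is correct and takes essentially the same route as the paper: both reduce injectivity to Proposition~\ref{P:coneFact}, writing $w=u-\ell$ with $u,\ell\in\sigma^\vee$ and $\ell\in\tau^\perp$, and using that $\varphi(\ell)>0$ (since $\tau^\perp$ is the lineality space of $\tau^\vee$) to recover $\varphi(w)=\varphi(u)\varphi(\ell)^{-1}$ from the restriction to $\sigma^\vee$. You additionally spell out, via the same formula, that the map is a homeomorphism onto its image, a point the paper's proof leaves implicit.
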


\begin{proof}
 By the definition~\eqref{Eq:T-action} of the $T_N$-action on $\Hom_c(\sigma^\vee,\RR_\geq)$, the restriction map is
 equivariant.
 It is injective because a monoid homomorphism $\varphi\in\Hom_c(\tau^\vee,\RR_\geq)$ 
 is determined by its restriction to $\sigma^\vee$.
 Indeed, let $\varphi\in\Hom_c(\tau^\vee,\RR_\geq)$  and $w\in\tau^\vee$.
 By Proposition~\ref{P:coneFact}, there are $u,\ell\in\sigma^\vee$ with $\ell\in\tau^\perp$ such that 
 $w=u-\ell$.
 Since $\tau^\perp\subset\tau^\vee$ is a linear space, $\varphi(\ell)\neq 0$  and 
 $\varphi(w)=\varphi(u)\varphi(\ell)^{-1}$.
\end{proof}

\begin{example}\label{Ex:Inclusion}
 Let $\sigma=\cone\{(1,\sqrt{2}),(0,1)\} \subset\RR^2$, and let $\tau$ be its face generated by
 $(0,1)$. 
 Then $\sigma^\vee$ is the cone $C$ of Example~\ref{Ex:ConeEx} and 
 $\tau^\vee=\{(x,y)\in\RR^2\mid y\geq 0\}=\RR\times\RR_\geq$.
 The points $a=(-\sqrt{2},1)$, $b=(1,0)$, and $c=(1,1)$  lie in both dual cones $\sigma^\vee$ and $\tau^\vee$, and
 $\tau^\vee$ has an additional generator $d:=(-1,0)$.
 Figure~\ref{F:CDATV} displays the cones $\sigma$ and $\tau$, their duals, and the associated 
\begin{figure}[htb]
\[
\begin{array}{lll}
   \begin{picture}(50,60)(-6,0)
    \put(0,0){\includegraphics{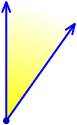}}
    \put(-6,29){$\tau$}  \put(13,43){$\sigma$}
    \put(-6,-2){$0$}
   \end{picture}
   &
   \begin{picture}(155,60)
    \put(0,0){\includegraphics{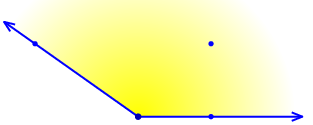}}
    \put(11,31){$a$} \put(50,43){$\sigma^\vee$} \put(93,37){$c$} \put(100,8){$b$}
    \put(33,-8){$\sigma^\perp=0$}
   \end{picture}
    &
   \begin{picture}(95,65)(-2,0)
    \put(0,0){\includegraphics[height=65pt]{figures/AffineTV}}
    \put(-2,5){$a$}  \put(21,58){$c$} \put(100,22){$b$}
    \put(58,8){$Y_{\{a,b,c\}}$}
   \end{picture}
    \\
   \begin{picture}(50,85)(-6,0)
    \put(0,0){\includegraphics{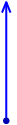}}
    \put(-6,29){$\tau$}
    \put(-6,-2){$0$}
   \end{picture}
    &
   \begin{picture}(155,85)
    \put(0,0){\includegraphics{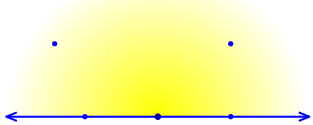}}
    \put(17,32){$a$} \put(70,39){$\tau^\vee$} \put(37,8){$d$} \put(110,8){$b$}
     \put(103,37){$c$}
    \put(-7,4){$\tau^\perp$}
    \put(73,-8){$0$}
   \end{picture}
    &
   \begin{picture}(93,85)
    \put(0,0){\includegraphics[height=67pt]{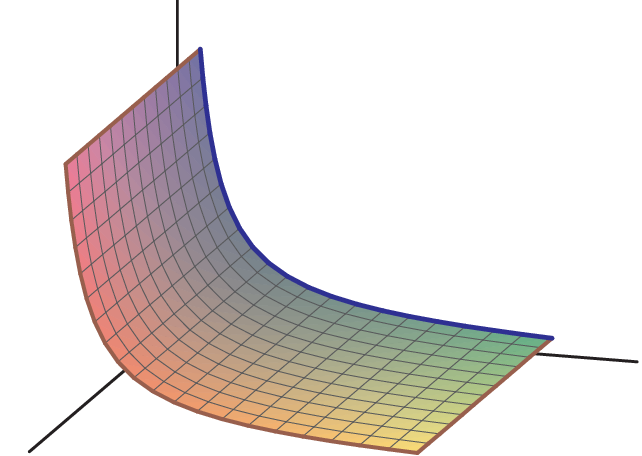}}
    \put(0,4){$a$}  \put(18,63){$d$} \put(87,16.5){$b$}
    \put(45,40){$Y_{\{a,b,d\}}$}
   \end{picture}
    
\end{array}
\]
\caption{Cones, their duals, and associated irrational affine toric varieties from Example~\ref{Ex:Inclusion}.}
\label{F:CDATV}
\end{figure}
irrational affine toric varieties $Y_{\{a,b,c\}}\simeq V_\sigma$ and  $Y_{\{a,b,d\}}\simeq V_\tau$.
The inclusion $V_\tau\hookrightarrow V_\sigma$ is induced by projecting $Y_{\{a,b,d\}}$ to 
the quadrant $\RR^{\{a,b\}}_\geq$ and then applying the inverse of the projection from $Y_{\{a,b,c\}}$.
The image of $Y_{\{a,b,d\}}$  in $Y_{\{a,b,c\}}$ only omits the $a$-axis.\hfill$\diamond$
\end{example}

The \demph{irrational toric variety} $Y_\Sigma$ associated to a fan $\Sigma\subset N$  is 
 \begin{equation}\label{Eq:ITV_Gluing}
   \defcolor{Y_\Sigma}\ :=\ \bigcup_{\sigma\in\Sigma} V_\sigma\ =\ 
    \bigcup_{\sigma\in\Sigma} \Hom_c(\sigma^\vee,\RR_\geq)\,,
 \end{equation}
the union of the irrational affine toric varieties $V_\sigma$ for $\sigma\in\Sigma$ glued together along the inclusions
$V_\tau\hookrightarrow V_\sigma$ for $\tau$ a face of $\sigma$.
For each cone $\sigma\in\Sigma$, let $\defcolor{x_\sigma}\in V_\sigma$ be the distinguished point
$\varepsilon_{\sigma^\perp}$, where $\sigma^\perp\subset\sigma^\vee$ is its lineality space.
We also let \defcolor{$U_\sigma$} be the $T_N$-orbit through $x_\sigma$, so that $U_\sigma=\calO_{\sigma^\perp}$, in the
notation of Subsection~\ref{SS:Affine_cone}. 
Note also that $U_\sigma \simeq N/\langle\sigma\rangle$.

\begin{theorem}\label{Th:ITV_structure}
 For any fan $\Sigma\subset N$, the irrational toric variety $Y_\Sigma$ is a $T_N$-equivariant cell complex.
 Each cell is an orbit and corresponds to a unique cone $\sigma\in\Sigma$.
 The cell corresponding to the cone $\sigma$ is $U_\sigma\simeq N/\langle\sigma\rangle$ and 
 $\tau\subset\sigma$ if and only if $U_\sigma\subset\overline{U_\tau}$, so that the cell structure
 of $Y_\Sigma$ and its poset of containment-in-closure is dual to that of the fan $\Sigma$.
\end{theorem}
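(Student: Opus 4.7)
The plan is to reduce everything to the affine results of Section~\ref{SS:Affine_cone} via gluing and the standard face duality between a polyhedral cone and its dual. Within each affine chart $V_\sigma = \Hom_c(\sigma^\vee,\RR_\geq)$, the orbit decomposition~\eqref{Eq:orbit_decomposition} already writes $V_\sigma = \bigsqcup_F \calO_F$ as $F$ runs over faces of $\sigma^\vee$. The inclusion-reversing bijection $\tau \mapsto F_\tau := \sigma^\vee \cap \tau^\perp$ between faces of $\sigma$ and faces of $\sigma^\vee$ lets me re-index this decomposition by faces of $\sigma$; in particular the face $\sigma$ itself corresponds to the lineality $\sigma^\perp$ of $\sigma^\vee$, so $\calO_{\sigma^\perp}$ is exactly $U_\sigma$ in the notation of the theorem, and the other orbits in $V_\sigma$ are $U_\tau$ for proper faces $\tau$ of $\sigma$.

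The first nontrivial step is to check that this local indexing is compatible with the gluing, so that a single orbit $U_\tau$ is defined unambiguously across all charts. For $\tau$ a face of $\sigma$, restricting $x_\tau = \varepsilon_{\tau^\perp} \in V_\tau$ from $\tau^\vee$ to $\sigma^\vee$ gives a homomorphism that is $1$ on $\sigma^\vee \cap \tau^\perp = F_\tau$ and $0$ elsewhere on $\sigma^\vee$, i.e.\ precisely $\varepsilon_{F_\tau}$; by the $T_N$-equivariance of the inclusion (Lemma~\ref{L:inclusion}) the orbit $T_N.x_\tau$ in $V_\tau$ maps onto $\calO_{F_\tau}$ in $V_\sigma$. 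Combined with the local decomposition $V_\sigma = \bigsqcup_{\tau \text{ face of } \sigma} U_\tau$ and the identity $V_\sigma \cap V_{\sigma'} = V_{\sigma \cap \sigma'}$ (which follows from the gluing definition), I get a global disjoint decomposition $Y_\Sigma = \bigsqcup_{\sigma \in \Sigma} U_\sigma$ into $T_N$-orbits indexed by cones of $\Sigma$.

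To identify $U_\sigma$ as $N/\langle\sigma\rangle$, compute the stabilizer of $x_\sigma$ in $T_N \simeq N$: $\gamma_v$ fixes $\varepsilon_{\sigma^\perp}$ iff $\exp(-u\cdot v) = 1$ for every $u \in \sigma^\perp$, equivalently $v \in (\sigma^\perp)^\perp = \langle\sigma\rangle$, so $U_\sigma \simeq T_N/T_{\langle\sigma\rangle} \simeq N/\langle\sigma\rangle$, a Euclidean space of dimension $\dim N - \dim\langle\sigma\rangle$, hence a topological cell. For the closure poset, note that each $V_\sigma$ is open in $Y_\Sigma$ (the image of $V_\tau$ in $V_\sigma$ corresponds, under the homeomorphism $V_\sigma \simeq \sigma^\vee$ of Lemma~\ref{L:HomIsAffine}, to the star of the face $F_\tau$ of $\sigma^\vee$, which is open), so closures in $Y_\Sigma$ are computed chartwise. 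Corollary~\ref{Cor:orbits} then gives $\overline{U_\tau} \cap V_\sigma = \bigsqcup_{\tau \subset \rho \subset \sigma} U_\rho$ (the inclusion-reversal of $F \subset E$ vs.\ $\rho \supset \tau$ coming from the face duality of Step 1); taking the union over $\sigma \supset \tau$ yields $\overline{U_\tau} = \bigsqcup_{\rho \in \Sigma,\, \rho \supset \tau} U_\rho$, whence $U_\sigma \subset \overline{U_\tau}$ iff $\tau \subset \sigma$, as required. The main obstacle throughout is the careful bookkeeping of the inclusion-reversing correspondence $\tau \leftrightarrow F_\tau$ and the verification that $U_\tau$ is unambiguous as $\tau$ ranges over faces common to several cones of $\Sigma$; once this is in place, everything else is a direct application of the affine results.
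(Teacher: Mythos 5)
Your proof is correct and follows essentially the same route as the paper's: decompose each affine chart $V_\sigma$ into orbits via the face correspondence, use Corollary~\ref{Cor:orbits} for the local closure poset, and transfer to $Y_\Sigma$ through the gluing. The paper's version is more terse (it simply asserts that the gluing is along open subsets), whereas you carefully justify the openness via the star-of-a-face picture and compute the stabilizer of $x_\sigma$ explicitly, but the underlying argument is the same.
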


\begin{proof}
 For each cone $\sigma\in\Sigma$, the set $V_\sigma$ is a $T_N$-equivariant cell complex whose cells are $T_N$-orbits that 
 correspond to the faces $\tau$ of $\sigma$ where the orbit $U_\tau$ corresponding to $\tau$ is identified with
 $N/\langle\tau\rangle$. 
 By Corollary~\ref{Cor:orbits}, the cell $U_\sigma$ lies in the closure of any cell $U_\tau$ for $\tau\subset\sigma$.
 Because $Y_\Sigma$ is obtained by identifying the sets $V_\sigma$ along common open subsets, these same facts hold for 
 $Y_\Sigma$.
 The last statement is a consequence of the previous statements.
\end{proof}

\begin{corollary}\label{Cor:affine_cover}
 The collection $\{ V_\sigma\mid \sigma\in\Sigma\}$ of irrational affine toric varieties  forms a $T_N$-equivariant open
 cover of $Y_\Sigma$ by irrational affine toric varieties.
\end{corollary}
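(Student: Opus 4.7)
The plan is as follows. The covering property is immediate from the definition~\eqref{Eq:ITV_Gluing} of $Y_\Sigma$, and each $V_\sigma$ is $T_N$-invariant since the inclusions $V_\tau\hookrightarrow V_\sigma$ used to form $Y_\Sigma$ are $T_N$-equivariant (Lemma~\ref{L:inclusion}) and $T_N$ acts on each $V_\sigma$ via~\eqref{Eq:T-action}. Choosing any finite generating set $\calA$ for the cone $\sigma^\vee$, Lemma~\ref{L:HomIsAffine} yields a $T_N$-equivariant homeomorphism $V_\sigma \xrightarrow{\sim} Y_\calA$, so each $V_\sigma$ is indeed an irrational affine toric variety. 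The substantive point is to show each $V_\sigma$ is open in the glued topology on $Y_\Sigma$.

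For openness I would unwind the quotient topology: $V_\sigma$ is open in $Y_\Sigma$ if and only if for every other cone $\sigma'\in\Sigma$ the intersection $V_\sigma\cap V_{\sigma'}$ is open in $V_{\sigma'}$. The fan axioms force $\tau := \sigma\cap\sigma'$ to be a common face of both. Using the orbit decomposition of Theorem~\ref{Th:ITV_structure}, each point of $Y_\Sigma$ lies in a unique orbit $U_\eta$, and $U_\eta \subset V_\sigma$ exactly when $\eta$ is a face of $\sigma$. Hence $V_\sigma\cap V_{\sigma'}$ is precisely the union of $U_\eta$ over faces $\eta$ of $\tau$, which is the image of $V_\tau$ in $V_{\sigma'}$. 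It thus suffices to prove that for any face $\tau\subset\sigma'$, the image of $V_\tau\hookrightarrow V_{\sigma'}$ is open in $V_{\sigma'}$.

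The key step is a characterization of this image: a homomorphism $\varphi\in V_{\sigma'}=\Hom_c({\sigma'}^\vee,\RR_\geq)$ extends to an element of $V_\tau=\Hom_c(\tau^\vee,\RR_\geq)$ if and only if $\varphi$ is strictly positive on the face $\tau^\perp\cap{\sigma'}^\vee$ of ${\sigma'}^\vee$. The ``only if'' direction is immediate, since $\tau^\perp$ lies in the lineality space of $\tau^\vee$, on which any element of $V_\tau$ is positive (Example~\ref{Ex:LinSpace}). For the ``if'' direction I would invoke Proposition~\ref{P:coneFact}: every $w\in\tau^\vee$ has a decomposition $w=u-\ell$ with $u,\ell\in{\sigma'}^\vee$ and $\ell\in\tau^\perp$, so one defines the extension by $\tilde\varphi(w):=\varphi(u)/\varphi(\ell)$, with well-definedness, the monoid property, and continuity on relative interiors of faces of $\tau^\vee$ all following from the multiplicative nature of $\varphi$ together with the same proposition.

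Finally, picking finitely many generators $u_1,\dotsc,u_k$ of the face $\tau^\perp\cap{\sigma'}^\vee$, Lemma~\ref{L:HomFace} reduces the condition ``$\varphi$ is nonzero on $\tau^\perp\cap{\sigma'}^\vee$'' to the finite conjunction $\varphi(u_1)>0,\dotsc,\varphi(u_k)>0$; since the topology on $V_{\sigma'}$ is the weakest making every evaluation continuous, this cuts out an open subset, completing the proof. I expect the main obstacle to be the bookkeeping of identifications in the previous paragraph: verifying that $V_\sigma\cap V_{\sigma'}$ as computed in the glued space $Y_\Sigma$ coincides with (the image of) $V_{\sigma\cap\sigma'}$, which requires checking that the pairwise inclusions $V_\tau\hookrightarrow V_\sigma$ are mutually compatible for $\tau$ a common face—equivalently, that the cocycle condition holds on triple overlaps, so that the gluing~\eqref{Eq:ITV_Gluing} is well-defined and the cell decomposition of Theorem~\ref{Th:ITV_structure} may legitimately be applied.
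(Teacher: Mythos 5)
Your proof is correct, but you take a genuinely different and more hands-on route than the paper. The paper's argument is short and global: by Theorem~\ref{Th:ITV_structure}, $Y_\Sigma\smallsetminus V_\sigma=\bigcup_{\tau\not\subset\sigma}U_\tau$, and since $\tau\subset\rho$, $\tau\not\subset\sigma$ forces $\rho\not\subset\sigma$, this equals $\bigcup_{\tau\not\subset\sigma}\overline{U_\tau}$, a finite union of closed sets; so the complement of $V_\sigma$ is closed. Your version instead unwinds the quotient topology chart by chart and reduces to showing that the image of $V_\tau\hookrightarrow V_{\sigma'}$ is open, which you do by proving that this image is $\{\varphi\in V_{\sigma'}\mid\varphi>0\ \mbox{on}\ \tau^\perp\cap(\sigma')^\vee\}$, cut out by finitely many strict inequalities $\varphi(u_i)>0$ on generators $u_i$ of that face. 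This extension criterion is correct (the ``only if'' is Example~\ref{Ex:LinSpace}, and the ``if'' is the Proposition~\ref{P:coneFact} decomposition, with well-definedness and continuity needing a short additional check that you correctly flag), and it buys a concrete local description of the open set $V_\sigma\cap V_{\sigma'}$ inside each chart, which the paper's argument does not exhibit. The price is more bookkeeping; in particular your remark about the cocycle condition on triple overlaps is a legitimate prerequisite for the gluing~\eqref{Eq:ITV_Gluing} to be well-defined, though it is also tacitly assumed in the paper's proof (both rely on Theorem~\ref{Th:ITV_structure}, which already presupposes a coherent glued space). Net: both proofs are valid; the paper's is cleaner, yours is more explicit.
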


\begin{proof}
 Since $Y_\Sigma$ is the union of the irrational affine toric varieties $V_\sigma$, which are $T_N$-equivariant as is the
 gluing, we only need to show that each $V_\sigma$ is open in $Y_\Sigma$. 
 By Theorem~\ref{Th:ITV_structure},
\[
   Y_\Sigma\smallsetminus V_\sigma\ =\ 
   \bigcup_{\tau\not\subset\sigma}  U_\tau\ =\ 
   \bigcup_{\tau\not\subset\sigma}  \overline{U_\tau}\ ,
\]
 as $\tau\subset\rho$ with $\tau\not\subset\sigma$ implies that $\rho\not\subset\sigma$.
 Thus $Y_\Sigma\smallsetminus V_\sigma$ is closed.
\end{proof}

For a fan $\Sigma\subset N$ and a cone $\sigma\in\Sigma$, the \demph{star} of $\sigma$ in $\Sigma$ is the fan in $N$
\[
   \defcolor{\Star(\sigma)}\ :=\ 
  \{ \langle\sigma\rangle + \tau \mid \tau\in\Sigma\mbox{ and }\sigma\mbox{ is a face of }\tau\}\,.
\]
By Theorem~\ref{Th:ITV_structure}, an orbit $U_\tau$ lies in the closure of an orbit $U_\sigma$ if and only if $\sigma$ is
a face of $\tau$.
The following corollary is a consequence of these facts and the definition of star.

\begin{corollary}\label{C:Star}
 For any cone $\sigma\in\Sigma$, the closure of the orbit $U_\sigma$ is the toric variety $Y_{\Star(\sigma)}$.
\end{corollary}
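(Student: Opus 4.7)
The plan is to construct a canonical $T_N$-equivariant homeomorphism between $Y_{\Star(\sigma)}$ and the closure $\overline{U_\sigma}$ in $Y_\Sigma$ by matching them cell-by-cell on compatible affine charts.

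First, I would use Theorem~\ref{Th:ITV_structure} applied to $Y_\Sigma$ to write
\[
 \overline{U_\sigma}\ =\ \bigsqcup\, U_\tau\,,
\]
the union over those $\tau \in \Sigma$ having $\sigma$ as a face, with each $U_\tau \simeq N/\langle\tau\rangle$. On the other side, the cones of $\Star(\sigma)$ are precisely $\langle\sigma\rangle + \tau$ for such $\tau$, and since $\langle\sigma\rangle \subseteq \langle\tau\rangle$ we have $\langle\langle\sigma\rangle+\tau\rangle = \langle\tau\rangle$. Applying Theorem~\ref{Th:ITV_structure} to $Y_{\Star(\sigma)}$ yields cells $U_{\langle\sigma\rangle+\tau} \simeq N/\langle\tau\rangle$, so the assignment $U_{\langle\sigma\rangle+\tau} \leftrightarrow U_\tau$ is a $T_N$-equivariant bijection of cells that respects the containment-in-closure partial order, both being dual to the poset of cones of $\Sigma$ containing $\sigma$ as a face.

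Next I would promote this to a homeomorphism by working locally. For each such $\tau$, computing gives $(\langle\sigma\rangle + \tau)^\vee = \sigma^\perp \cap \tau^\vee$, which is a face of $\tau^\vee$, so extension by zero off this face defines a $T_N$-equivariant continuous injection $V_{\langle\sigma\rangle+\tau} \hookrightarrow V_\tau$. By Lemma~\ref{L:HomIsAffine}, this is a homeomorphism onto its image, which consists of those $\varphi \in V_\tau$ with $\supp(\varphi) \subseteq \sigma^\perp \cap \tau^\vee$. Using the orbit decomposition~\eqref{Eq:orbit_decomposition} and the duality between faces of $\tau^\vee$ and faces of $\tau$, I would verify that this image is precisely $\overline{U_\sigma} \cap V_\tau$.

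The final step is to check that these local identifications glue. For $\tau$ a face of $\tau'$ in $\Sigma$ with both containing $\sigma$ as a face, the functoriality of $\Hom_c(-,\RR_\geq)$ applied to the inclusions of dual cones makes the resulting square commute; the local homeomorphisms then assemble, via the gluing~\eqref{Eq:ITV_Gluing} and Corollary~\ref{Cor:affine_cover}, into the desired identification $Y_{\Star(\sigma)} \xrightarrow{\sim} \overline{U_\sigma}$. The main obstacle is the duality bookkeeping in the second step: verifying that the faces $F$ of $\tau^\vee$ lying inside $\sigma^\perp \cap \tau^\vee$ are exactly those whose dual face $F^\perp \cap \tau$ of $\tau$ contains $\sigma$, so that the orbit decompositions on the two sides agree cell-for-cell.
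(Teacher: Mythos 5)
Your proof is correct and is essentially the argument the paper has in mind, but the paper barely states it: the text before Corollary~\ref{C:Star} simply notes that the corollary ``is a consequence of'' the orbit-closure relation from Theorem~\ref{Th:ITV_structure} and the definition of star, leaving the identification of $\overline{U_\sigma}$ with $Y_{\Star(\sigma)}$ as a homeomorphism of $T_N$-spaces implicit. You supply the missing content. Your key computation $(\langle\sigma\rangle+\tau)^\vee=\sigma^\perp\cap\tau^\vee$ and the observation that this is the face of $\tau^\vee$ exposed by any $v\in\sigma^\circ$ are exactly what is needed to upgrade the combinatorial matching of orbit posets (which is all the paper's one-liner directly gives) to an actual $T_N$-equivariant homeomorphism on affine charts; the compatibility of the extension-by-zero maps with the two gluings then assembles them. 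So the route is the same, but you make explicit the chart-level homeomorphism and its gluing compatibility, which the paper omits yet uses crucially in the proof of Theorem~\ref{Th:complete} when it replaces $Y_\Sigma$ by $Y_{\Star(\sigma)}$. One small point of care: when invoking Lemma~\ref{L:HomIsAffine} to see that extension by zero is a homeomorphism onto its image, it is cleanest to pick a generating set $\calA$ of $\tau^\vee$ adapted to its faces, so that $\calA\cap(\sigma^\perp\cap\tau^\vee)$ generates the face and one can appeal directly to the face decomposition~\eqref{Eq:decomposition} of $Y_\calA$.
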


Lastly, if $\Sigma$ is rational, then $Y_\Sigma$ is the nonnegative part of the classical toric variety
$X_\Sigma$.

\begin{theorem}
 If\/ $\Sigma\subset N$ is a rational fan, then $Y_\Sigma=X_\Sigma(\RR_\geq)$.
\end{theorem}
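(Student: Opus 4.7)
The plan is to reduce to the affine case and leverage the fact that both $Y_\Sigma$ and $X_\Sigma(\RR_\geq)$ are built by gluing affine pieces along inclusions associated to face containment $\tau \subset \sigma$. The heart of the argument is to exhibit, for each cone $\sigma\in\Sigma$, a $T_N$-equivariant homeomorphism
\[
  r_\sigma \colon V_\sigma = \Hom_c(\sigma^\vee,\RR_\geq) \longrightarrow W_\sigma(\RR_\geq) = \Hom_{\mon}(S_\sigma,\RR_\geq)
\]
given by restriction of a monoid homomorphism on $\sigma^\vee$ to the saturated submonoid $S_\sigma = \sigma^\vee\cap M_\ZZ$.

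To construct $r_\sigma$ I would work through Lemma~\ref{L:HomIsAffine}. Choose a Hilbert basis $\calA\subset S_\sigma$; then $\calA$ is finite, generates $S_\sigma$ as a monoid, and generates $\sigma^\vee = \cone(\calA)$ as a cone. Lemma~\ref{L:HomIsAffine} identifies $V_\sigma \xrightarrow{\sim} Y_\calA$ via evaluation on $\calA$. The analogous evaluation $\psi\mapsto(\psi(a))_{a\in\calA}$ on $W_\sigma(\RR_\geq)$ is injective (since $\calA$ generates $S_\sigma$) and continuous, and its image lies in $Y_\calA$: the monoid homomorphism $\psi$ respects every integer-nonnegative binomial relation among elements of $\calA$, and rationality of $\calA$ implies that every real-nonnegative relation is a nonnegative real combination of integer ones, so the full binomial system of Proposition~\ref{P:IATV} is satisfied. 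Conversely, given $z\in Y_\calA$, the monoid homomorphism $\varphi_z$ produced in the proof of Lemma~\ref{L:HomIsAffine} restricts to an element of $W_\sigma(\RR_\geq)$ mapping to $z$. Both $V_\sigma\to Y_\calA$ and $V_\sigma\to W_\sigma(\RR_\geq)\to Y_\calA$ are evaluation on $\calA$, so $r_\sigma$ is the required homeomorphism. Equivariance is immediate because both $T_N$-actions are $(t.\varphi)(u) = t^u\varphi(u)$, which commutes with restriction.

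Compatibility of $r_\sigma$ with face inclusions is automatic: for $\tau$ a face of $\sigma$, both $V_\tau\hookrightarrow V_\sigma$ (Lemma~\ref{L:inclusion}) and $W_\tau(\RR_\geq)\hookrightarrow W_\sigma(\RR_\geq)$ are restriction maps, and restrictions commute with one another. Gluing the identifications $r_\sigma$ over $\sigma\in\Sigma$ along the definitions~\eqref{Eq:ITV_Gluing} and of $X_\Sigma(\RR_\geq)$ then yields $Y_\Sigma = X_\Sigma(\RR_\geq)$. The main obstacle will be the surjectivity of $W_\sigma(\RR_\geq)\to Y_\calA$, equivalently, the existence of a canonical extension of any $\psi\in W_\sigma(\RR_\geq)$ from $S_\sigma$ to a continuous monoid homomorphism on all of $\sigma^\vee$. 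The support argument of Lemma~\ref{L:HomFace} reduces this to extending $\psi$ from $S_F := F\cap M_\ZZ$ to a single face $F$ of $\sigma^\vee$ on which $\psi$ is strictly positive; this extension is unique by first extending via the group envelope of $S_F$ inside $\langle F\rangle\cap M_\ZZ$, then tensoring with $\RR$, and finally invoking continuity of exponential-type one-parameter subgroups as computed in Example~\ref{Ex:ray}. Extending by zero to $\sigma^\vee\smallsetminus F$ gives an element of $V_\sigma$ restricting to $\psi$, completing the argument.
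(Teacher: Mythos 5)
Your proposal is correct and takes essentially the same route as the paper: reduce to the affine pieces, and identify $V_\sigma=\Hom_c(\sigma^\vee,\RR_\geq)$ with $\Hom_{\mon}(S_\sigma,\RR_\geq)$ by showing that the evaluation map $f_\calA$ of Lemma~\ref{L:HomIsAffine} is an isomorphism onto $Y_\calA$ from \emph{both} sides, so the restriction map between them is forced to be a homeomorphism; compatibility with gluing is the same observation that all maps involved are restrictions. Your version usefully spells out the rationality/density argument for why $\Hom_{\mon}(S_\sigma,\RR_\geq)\to Y_\calA$ lands in $Y_\calA$ and is surjective, details the paper asserts without proof.
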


\begin{proof}
 Because both $Y_\Sigma$ and $X_\Sigma(\RR_\geq)$ are constructed by the same gluing procedure from the sets 
 $V_\sigma=\Hom_c(\sigma^\vee,\RR_\geq)$ and $\Hom_{\mon}(S_\sigma, \RR_\geq)$ for the cones $\sigma\in\Sigma$, it
 suffices to show that these two sets are equal.
 (Recall that $S_\sigma=\sigma^\vee\cap M_\ZZ$.)
 Let $\sigma\subset M$ be a rational cone.  
 Then $\sigma^\vee$ is generated as a cone by the monoid $S_\sigma$, so that restricting a map from
 $\sigma^\vee$ to $S_\sigma$ is an injection $V_\sigma\hookrightarrow\Hom_{\mon}(S_\sigma, \RR_\geq)$.  
 Let $\calA\subset S_\sigma$ be a generating set for $\sigma^\vee$.
 The map $f_\calA$ of Lemma~\ref{L:HomIsAffine} maps both $V_\sigma$ and $\Hom_{\mon}(S_\sigma,\RR_\geq)$ to $Y_\calA$,
 with both maps isomorphisms.
 Thus the restriction map identifies $V_\sigma$ with $\Hom_{\mon}(S_\sigma,\RR_\geq)$, which completes the proof.
\end{proof}

\subsection{Maps of fans}\label{SS:maps}

Let $\Sigma\subset N$ and $\Sigma'\subset N'$ be fans in possibly different vector spaces $N$ and $N'$.
Let $Y_\Sigma$ and $Y_{\Sigma'}$ be the associated irrational toric varieties.
A map $\psi\colon Y_\Sigma\to Y_{\Sigma'}$ of irrational toric varieties is a continuous map 
$\psi\colon Y_\Sigma\to Y_{\Sigma'}$ together with a homomorphism $\Psi\colon T_N\to T_{N'}$ of topological groups
such that the following diagram commutes.
 \begin{equation} \label{Eq:Toric_Map}
  \raisebox{-27pt}{\begin{picture}(130,62)(-12,-1)
    \put(2,49){$T_{N}\times Y_{\Sigma}$} \put(53,52){\vector(1,0){44}}\put(71,56){$\mu$} \put(100,49){$Y_{\Sigma}$}
     \put(-12,27){$\Psi\times\psi$} \put(24,44){\vector(0,-1){33}}
                                   \put(105,44){\vector(0,-1){33}} \put(108,27){$\psi$}
    \put(-1,0){$T_{N'}\times Y_{\Sigma'}$} \put(53,3){\vector(1,0){44}}\put(71,7){$\mu$} \put(100,0){$Y_{\Sigma'}$}
  \end{picture}}
 \end{equation}
(The horizontal maps $\mu$ are the action, $\mu(t,y)=t.y$.)
A \demph{map of fans}, $\Psi\colon\Sigma\to\Sigma'$, is a linear map $\Psi\colon N\to N'$ such that for each cone
$\sigma\in\Sigma$, there is a cone $\sigma'\in\Sigma'$ with $\Psi(\sigma)\subset\sigma'$.

\begin{theorem}\label{Th:mapsOfFans}
 The association $\Sigma\mapsto Y_\Sigma$ is functorial for maps of fans.
 That is, if\/ $\Psi\colon \Sigma\to\Sigma'$ is a map of fans with $\Sigma\subset N$ and $\Sigma'\subset N'$, then there is a
 continuous map $\psi\colon Y_\Sigma\to Y_{\Sigma'}$ such that the diagram~\eqref{Eq:Toric_Map} commutes, where 
 the homomorphism $\Psi\colon T_N\to T_{N'}$ is induced by the linear map $\Psi\colon N\to N'$.
\end{theorem}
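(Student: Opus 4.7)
My plan is to construct $\psi$ chart by chart from the dual map $\Psi^*\colon M'\to M$, glue the charts, and check equivariance. For each cone $\sigma\in\Sigma$, let $\sigma'=\sigma'(\sigma)\in\Sigma'$ denote the smallest cone of $\Sigma'$ containing $\Psi(\sigma)$; this is well-defined because $\Sigma'$ is closed under intersections. Dualizing the inclusion $\Psi(\sigma)\subset\sigma'$ gives $\Psi^*((\sigma')^\vee)\subset\sigma^\vee$, so precomposition with $\Psi^*$ defines
\[
   \psi_\sigma\colon V_\sigma\longrightarrow V_{\sigma'}\,,\qquad
    \varphi\longmapsto\varphi\circ\Psi^*|_{(\sigma')^\vee}\,.
\]
The composite is patently a monoid homomorphism; the content is to show that $\psi_\sigma(\varphi)$ lies in $\Hom_c((\sigma')^\vee,\RR_\geq)$, and that $\psi_\sigma$ is continuous.

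The continuity check is the main technical obstacle. For a face $G$ of $(\sigma')^\vee$, the image $\Psi^*(G^\circ)$ is convex and lies in $\sigma^\vee$, so it is contained in the relative interior $F^\circ$ of a single face $F$ of $\sigma^\vee$: taking $F$ to be the smallest face of $\sigma^\vee$ that contains $\Psi^*(G^\circ)$, any point of $\Psi^*(G^\circ)$ on a proper subface of $F$ cut out by $w\in\sigma$ would force $w$ to vanish on the convex set $\Psi^*(G^\circ)$ entirely, contradicting the minimality of $F$. Continuity of $\varphi$ on $F^\circ$ and of $\Psi^*$ on $G^\circ$ then yields continuity of $\varphi\circ\Psi^*$ on $G^\circ$, so $\psi_\sigma(\varphi)\in\Hom_c((\sigma')^\vee,\RR_\geq)$. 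Continuity of the map $\psi_\sigma$ itself is automatic from the pointwise-convergence topology on $\Hom_c$: if $\varphi_n\to\varphi$ in $V_\sigma$, then for each $u\in(\sigma')^\vee$ we have $\varphi_n(\Psi^*(u))\to\varphi(\Psi^*(u))$.

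Equivariance and gluing come next. The homomorphism $\Psi\colon T_N\to T_{N'}$ induced by the linear map $\Psi\colon N\to N'$ sends $\gamma_v\mapsto\gamma_{\Psi(v)}$, equivalently $t\mapsto t\circ\Psi^*$. The calculation
\[
   \psi_\sigma(t.\varphi)(u)\ =\ t^{\Psi^*(u)}\,\varphi(\Psi^*(u))\ =\ \Psi(t)^u\,\psi_\sigma(\varphi)(u)
\]
gives chart-wise equivariance. For compatibility along inclusions, observe that if $\tau$ is a face of $\sigma$ in $\Sigma$, then $\tau':=\sigma'(\tau)$ is a face of $\sigma'$, since $\tau'$ is a cone of $\Sigma'$ contained in $\sigma'$ and any such cone in a fan is automatically a face of $\sigma'$. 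The square with horizontal arrows $\psi_\tau,\psi_\sigma$ and vertical arrows the inclusions of Lemma~\ref{L:inclusion} then commutes, as both compositions send $\varphi\in V_\tau$ to the restriction of $\varphi\circ\Psi^*$ to $(\sigma')^\vee$. Hence the $\psi_\sigma$ assemble into a well-defined continuous map $\psi\colon Y_\Sigma\to Y_{\Sigma'}$.

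Finally, $\Psi\colon T_N\to T_{N'}$ is continuous because it is induced by the linear map $\Psi\colon N\to N'$ under the identification $T_N\cong N$ via $v\mapsto\gamma_v$, and the global commutativity of~\eqref{Eq:Toric_Map} follows from the chart-wise equivariance displayed above together with the open cover of Corollary~\ref{Cor:affine_cover}. The hard part is the convexity argument in the second paragraph, where the interplay between the linear map $\Psi^*$ and the face lattice of the dual cones must be used to keep the composition inside the $\Hom_c$ class.
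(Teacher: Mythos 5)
Your proposal is correct and takes essentially the same route as the paper: define $\psi$ chart-by-chart as precomposition with $\Psi^*$, verify the composite lies in $\Hom_c((\sigma')^\vee,\RR_\geq)$, and then observe equivariance and gluing compatibility. The only cosmetic difference is in the $\Hom_c$ membership check: you argue directly that $\Psi^*$ maps each $G^\circ$ into a single relative-interior $F^\circ$ (using that $\Psi^*(G^\circ)$ is relatively open, which is worth stating explicitly to justify the vanishing-at-one-point argument), whereas the paper notes that preimages of faces and of complements of faces under $\Psi^*$ are again of that form, so that $\supp(\psi(\varphi))=(\Psi^*)^{-1}(\supp(\varphi))$; both are valid ways to see the same thing.
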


\begin{proof}
 Let $\Psi\colon \Sigma \to\Sigma'$ be a map of fans, where $\Sigma\subset N$ and $\Sigma'\subset N'$ are fans.
 The linear map $\Psi\colon N\to N'$ induces a homomorphism $\Psi\colon T_N\to T_{N'}$ of topological groups.
 We construct a map $\psi\colon Y_\Sigma\to Y_{\Sigma'}$ so that the diagram~\eqref{Eq:Toric_Map} commutes, by defining
 $\psi$ on each irrational affine toric variety $V_\sigma$ for $\sigma$ a cone of $\Sigma$.

 Let $\sigma\in\Sigma$ be a cone.
 Since $\psi\colon\Sigma \to\Sigma'$ is a map of fans, there is a cone $\sigma'\in\Sigma$ with
 $\Psi(\sigma)\subset\sigma'$. 
 Let $\defcolor{\Psi^*}\colon M' \to M$ be the map adjoint to $\Psi$, where $M$ and $M'$ are the vector spaces dual to $N$ and
 $N'$ respectively.
 As $\Psi(\sigma)\subset\sigma'$, we have $\Psi^*((\sigma')^\vee)\subset\sigma^\vee$.  
 Since these are polyhedral cones and $\Psi^*$ is linear, for any face $F$ of $\sigma^\vee$, its inverse image \defcolor{$F'$}
 in $(\sigma')^\vee$ is a face, and the same is true for the complement $\sigma^\vee{\smallsetminus} F$ of a face $F$ of
 $\sigma^\vee$.   

 For $\varphi\in V_\sigma=\Hom_c(\sigma^\vee,\RR_\geq)$, the composition $\defcolor{\psi(\varphi)}:=\varphi\circ\Psi^*$ is
 a monoid homomorphism  $(\sigma')^\vee \to\RR_\geq$.
 By the previous remark, the inverse image  of the support $\supp(\varphi)$ of $\varphi$ is the support of
 $\psi(\varphi)$,  and $\psi(\varphi)$ is continuous on its support.

 Thus $\psi$ maps $V_\sigma$ to $V_{\sigma'}$.
 This map is continuous as the topology is defined by point evaluation.
 It is also equivariant in the sense of~\eqref{Eq:Toric_Map}.
 Noting that it is compatible with the gluing~\eqref{Eq:ITV_Gluing} completes the proof.
\end{proof}

\section{Global Properties of Irrational Toric Varieties}
\label{S:projective}

We show that an irrational toric variety forms a monoid, that the fan $\Sigma$
is determined from limits in $Y_{\Sigma}$ and thus that $Y_\Sigma$ is a compact topological space if and only if the fan
$\Sigma$ is complete.
We also show that if $\Sigma$ is the normal fan to a polytope, then $Y_\Sigma$ has an equivariant embedding into a simplex
and is homeomorphic to that polytope.

\subsection{Irrational toric varieties as monoids}\label{SS:Monoids}
A topological monoid is a monoid that is a topological space whose operation \defcolor{$\bullet$}, called product, is
continuous. 
The affine irrational toric varieties $Y_\calA$ and $\Hom_c(C,\RR_\geq)$ are topological monoids whose structures 
are compatible with the isomorphism of Lemma~\ref{L:HomIsAffine}.
These monoids contain a dense torus which acts on them with finitely many orbits, and are thus irrational analogs of
linear algebraic monoids~\cite{Putcha,Renner}.
If we adjoin an absorbing element ${\bf 0}$ to an irrational toric variety $Y_\Sigma$, we obtain a
commutative topological monoid such that the inclusion of the irrational affine toric variety $V_\sigma$ is a monoid map,
for each cone $\sigma$ of the fan $\Sigma$.

Let $C\subset M$ be a polyhedral cone.
For $x,y\in \Hom_c(C,\RR_\geq)$, define $\defcolor{x\bullet y}\colon C\to\RR_\geq$  by  $(x\bullet y)(u)=x(u)y(u)$, for 
$u\in C$.
Let $\Phi(C)$ be the set of faces of $C$.
For faces $F,G\in \Phi(C)$, define $\defcolor{F\bullet G}:= F\cap G$.
Note that $x\mapsto \supp(x)$  is map $\Hom_c(C,\RR_\geq)\to\Phi(C)$.

\begin{proposition}\label{P:Cone_monoid}
 $\Hom_c(C,\RR_\geq)$ is a topological monoid, $\Phi(C)$ is a monoid, and $x\mapsto\supp(x)$ is a homomorphism of monoids.
 The identity of $\Hom_c(C,\RR_\geq)$ is the constant map $\varepsilon_C$ and if the lineality space $L$ of $C$ is the
 origin, then  $\Hom_c(C,\RR_\geq)$ has an absorbing element $\varepsilon_0$.
 The identity of $\Phi(C)$ is $C$ itself, and $L$ is its absorbing element.
 
 For any $u\in C$, the evaluation map $x\mapsto x(u)$ is a homomorphism of topological monoids
 $\Hom_c(C,\RR_\geq)\to\RR_\geq$. 
 For any linear map $f\colon M'\to M$ and cone $C'\subset M'$ with $f(C')\subset C$, the pullback map 
 $f^*\colon \Hom_c(C,\RR_\geq)\to \Hom_c(C',\RR_\geq)$ is a homomorphism of topological monoids.
\end{proposition}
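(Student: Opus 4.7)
The proof is a direct verification built on one key observation: the pointwise product of two monoid homomorphisms $C\to(\RR_\geq,\cdot)$ is again a monoid homomorphism, and a pointwise product of functions continuous on a set remains continuous there. I would first check that $x\bullet y\in\Hom_c(C,\RR_\geq)$ whenever $x,y$ are. The computation $(x\bullet y)(u+v)=x(u)x(v)y(u)y(v)=(x\bullet y)(u)\,(x\bullet y)(v)$ together with $(x\bullet y)(0)=1$ gives the homomorphism property, and continuity on the relative interior of each face is preserved under products. Associativity and commutativity are inherited from $\RR_\geq$, and $\varepsilon_C$ is the identity since $\varepsilon_C\equiv 1$. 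Continuity of $\bullet$ follows from the definition of the topology on $\Hom_c(C,\RR_\geq)$: convergence there is pointwise evaluation, and multiplication on $\RR_\geq$ is continuous, so $x_n\to x$ and $y_n\to y$ forces $x_n(u)y_n(u)\to x(u)y(u)$ for every $u\in C$. When $L=\{0\}$, the origin is a face and $\varepsilon_0=\varepsilon_{\{0\}}$ lies in $\Hom_c(C,\RR_\geq)$; using $x(0)=1$ one computes $(\varepsilon_0\bullet x)(u)=0$ for $u\neq 0$ and $1$ at $u=0$, so $\varepsilon_0\bullet x=\varepsilon_0$, giving the absorbing element.

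Next, the face monoid $\Phi(C)$ under intersection is commutative and associative with identity $C$; since the lineality space $L$ is contained in every face of $C$, one has $F\cap L=L$, so $L$ is absorbing. The support map is a monoid homomorphism: by Lemma~\ref{L:HomFace} the support is a face, and $(x\bullet y)(u)>0$ iff $x(u),y(u)>0$, giving $\supp(x\bullet y)=\supp(x)\cap\supp(y)$, with $\supp(\varepsilon_C)=C$.

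Finally, evaluation at any $u\in C$ is visibly a continuous monoid homomorphism $\Hom_c(C,\RR_\geq)\to\RR_\geq$. For a linear map $f\colon M'\to M$ with $f(C')\subset C$, the pullback $f^*x=x\circ f$ satisfies $f^*(x\bullet y)=f^*x\bullet f^*y$ and $f^*\varepsilon_C=\varepsilon_{C'}$ by unwinding definitions, and is continuous since pointwise convergence pulls back. The one substantive point, which I expect to be the main obstacle, is checking that $f^*x$ actually lies in $\Hom_c(C',\RR_\geq)$, i.e.\ that it is continuous on the relative interior of each face $F'$ of $C'$. This reduces to the standard convex-geometry fact that the linear image $f((F')^\circ)$ lies in the relative interior of the unique smallest face of $C$ containing $f(F')$, on which $x$ is continuous by hypothesis.
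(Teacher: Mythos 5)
Your proof is correct and takes the same direct-verification approach as the paper; the paper's own proof is terser, establishing closure under $\bullet$ via $\supp(x\bullet y)=\supp(x)\cap\supp(y)$ being a face, then dismissing the rest as "straightforward," whereas you fill in those checks. The one point you flag as substantive---that $f^*x$ lands in $\Hom_c(C',\RR_\geq)$---is indeed the right thing to worry about, and your reduction to the fact that $f$ sends $(F')^\circ$ into the relative interior of the smallest face of $C$ containing $f(F')$ is the same convex-geometry input the paper uses (in slightly different form, via preimages of faces) when it proves the analogous functoriality statement in Theorem~\ref{Th:mapsOfFans}.
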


\begin{proof}
 Let $x,y\in\Hom_c(C,\RR_\geq)$.
 As $x$ and $y$ are monoid homomorphisms, $x\bullet y$ is a monoid homomorphism.
 Also as $\supp(x\bullet y)=\supp(x)\cap\supp(y)$ is a face of $C$ and elements of $\Hom_c(C,\RR_\geq)$  are monoid
 homomorphisms that are continuous on their support, we conclude that $x\bullet y\in\Hom_c(C,\RR_\geq)$.
 This product is commutative.
 As it is defined pointwise, it is continuous, and so $\Hom_c(C,\RR_\geq)$ is a commutative 
 topological monoid.
 The other assertions are straightforward.
\end{proof}

Let $\calA\subset M$ be finite.
The orthant $\RR^\calA_\geq$ is a monoid under the Hadamard product; for $x,y\in\RR^\calA_\geq$ and $a\in\calA$,
set $(x\bullet y)_a:= x_a y_a$.
Then the injective map $f_\calA\colon\Hom_c(\cone(\calA),\RR_\geq)\to\RR^\calA_\geq$ of
Lemma~\ref{L:HomIsAffine} is a homomorphism of topological monoids whose image is $Y_\calA$.

Let $\Sigma\subset N$ be a fan.
For a cone $\sigma\in\Sigma$, the irrational affine toric variety $V_\sigma$ is a monoid under pointwise multiplication and
when $\tau\subset\sigma$ is a face, the inclusion $V_\tau\subset V_\sigma$ is a monoid homomorphism.
We define a product $\bullet$ on $\defcolor{Y_\Sigma^+}:=Y_\Sigma \sqcup \{{\bf 0}\}$, where
${\bf 0}$ is an isolated point that acts as an absorbing element.
Let $x,y\in Y_\Sigma^+$,
\begin{enumerate}
 \item
     If either $x$ or $y$ is ${\bf 0}$, then $x\bullet y={\bf 0}$.
 \item
     If there is a cone $\sigma\in\Sigma$ with $x,y\in V_\sigma$, then $x\bullet y$ is their product in $V_\sigma$.
 \item
     If there is no cone $\sigma\in\Sigma$  with $x,y\in V_\sigma$, then $x\bullet y={\bf 0}$.
      (This includes case (1).)
\end{enumerate}

Intersection of cones defines a monoid structure on $\Sigma$.
More interesting is the product on $\defcolor{\Sigma^+}:=\Sigma\sqcup\{{\bf 0}\}$, where ${\bf 0}$ is a new point that acts
as an absorbing element, in which $\sigma\bullet\tau$ is defined to be the smallest cone containing both $\sigma$ and
$\tau$ if such a cone exists, and ${\bf 0}$ otherwise.

\begin{theorem}\label{Th:ITV_monoid}
 For a fan $\Sigma\subset N$, $Y_\Sigma^+$ is a commutative topological monoid with the inclusion
 $V_\sigma\hookrightarrow Y_\Sigma\subset Y_\Sigma^+$ a homomorphism of topological monoids, for every $\sigma\in\Sigma$.
 For a map $\Psi\colon\Sigma\to\Sigma'$ of fans the functorial map $\psi\colon Y_\Sigma^+\to Y_{\Sigma'}^+$ of irrational
 toric varieties is a homomorphism of topological monoids.
 Finally, the map $Y_\Sigma^+\to\Sigma^+$ that sends an element $x$ to the cone
 $\sigma\in\Sigma$ where $x\in U_\sigma$ or to ${\bf 0}$ when $x={\bf 0}$ is a homomorphism of monoids.
\end{theorem}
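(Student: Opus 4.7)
The plan is to reduce each of the three claims to properties already established on the affine pieces $V_\sigma = \Hom_c(\sigma^\vee,\RR_\geq)$ and then propagate along the open cover $\{V_\sigma\}_{\sigma\in\Sigma}$ of $Y_\Sigma$ (Corollary~\ref{Cor:affine_cover}). By Proposition~\ref{P:Cone_monoid} each $V_\sigma$ is already a commutative topological monoid under pointwise product, with identity $\varepsilon_{\sigma^\vee}$, and the face inclusions $V_\tau\hookrightarrow V_\sigma$ are continuous monoid homomorphisms because restriction commutes with pointwise multiplication.

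For the monoid structure on $Y_\Sigma^+$, I would first note that the identities $\varepsilon_{\sigma^\vee}$ of distinct pieces all restrict to the constant map $1$ on overlaps, so they glue to a single identity element in $Y_\Sigma$. Commutativity is inherited from each $V_\sigma$. Well-definedness of $x\bullet y$ when $x,y$ lie in two different pieces $V_\sigma$ and $V_{\sigma'}$ follows because both then lie in $V_{\sigma\cap\sigma'}$ and the two inclusions are monoid homomorphisms, so the two products agree. For associativity, either $x,y,z$ all lie in a common $V_\sigma$---in which case Proposition~\ref{P:Cone_monoid} applies---or one uses the support identity $\supp(x\bullet y)=\supp(x)\cap\supp(y)$ to see that if $(x\bullet y)\bullet z\neq\mathbf{0}$, then the orbit cones of $x$, $y$, and $z$ are all faces of a common cone in $\Sigma$, contradicting the hypothesis; hence both iterated products equal $\mathbf{0}$.

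For functoriality, the map $\psi$ constructed in Theorem~\ref{Th:mapsOfFans} is given on each $V_\sigma$ by $\varphi\mapsto\varphi\circ\Psi^*$; precomposition commutes with pointwise product, so $\psi$ restricts to a monoid homomorphism on each piece, and these glue. For the support map $Y_\Sigma^+\to\Sigma^+$, I would use the same identity $\supp(x\bullet y)=\supp(x)\cap\supp(y)$ from Proposition~\ref{P:Cone_monoid}: under the order-reversing face duality between $\sigma$ and $\sigma^\vee$, the face $\sigma^\vee\cap\tau^\perp\cap\rho^\perp$ corresponds to the smallest face of $\sigma$ containing both $\tau$ and $\rho$, which is precisely their product in $\Sigma^+$.

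The main obstacle I anticipate is the global continuity of $\bullet$ on $Y_\Sigma^+$: inside each $V_\sigma\times V_\sigma$ continuity is immediate from Proposition~\ref{P:Cone_monoid}, and each $V_\sigma$ is open by Corollary~\ref{Cor:affine_cover}; but verifying continuity at pairs $(x,y)$ whose orbit cones lie in no common cone of $\Sigma$---where the product must jump to the isolated point $\mathbf{0}$---requires carefully using the closedness of $Y_\Sigma\setminus V_\sigma$ together with the orbit closure structure of Theorem~\ref{Th:ITV_structure} to show that such incompatibility persists under small perturbations.
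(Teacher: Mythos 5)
The paper omits this proof entirely, remarking only that it may be proven using arguments similar to those of Proposition~\ref{P:Cone_monoid}, so there is no paper argument to compare against directly.

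Your algebraic arguments are sound. Well-definedness (reducing two pieces $V_\sigma$, $V_{\sigma'}$ to the common face $V_{\sigma\cap\sigma'}$), associativity (if $(x\bullet y)\bullet z\neq\mathbf{0}$ then the orbit cone of $x\bullet y$ is the join of those of $x$ and $y$, and since that join and the orbit cone of $z$ are faces of a common cone $\sigma'$, so are all three original orbit cones), and the face-duality description of the support homomorphism $Y_\Sigma^+\to\Sigma^+$ are all correct and are the right reductions.

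Two genuine gaps remain, however, and both point at weaknesses in the theorem as stated. First, functoriality: showing that $\psi|_{V_\sigma}$ is a monoid map does not show that $\psi$ respects the product on $Y_\Sigma^+$, because of case~(3) of the definition of $\bullet$. It can happen that $x\bullet y=\mathbf{0}$ in $Y_\Sigma^+$ while $\psi(x)\bullet\psi(y)\neq\mathbf{0}$ in $Y_{\Sigma'}^+$: take $\Sigma\subset\RR^2$ consisting of the origin and the two positive coordinate rays $\sigma_1,\sigma_2$, take $\Sigma'=\Sigma\cup\{\sigma'\}$ with $\sigma'$ the first quadrant, and take $\Psi=\mathrm{id}$. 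Then $x_{\sigma_1}\bullet x_{\sigma_2}=\mathbf{0}$ in $Y_\Sigma^+$, but in $Y_{\Sigma'}^+$ both images lie in $V_{\sigma'}$ and their product is $x_{\sigma'}\neq\mathbf{0}$. Your argument never confronts this case.

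Second, you correctly flag global continuity as the crux, but the hope that a careful open/closed argument will dispose of it is misplaced: with $\mathbf{0}$ declared an isolated point, continuity actually fails. In the same fan $\Sigma$, take $x_n=\gamma_{n(1,0)}.\varepsilon\to x_{\sigma_1}$ and $y_n=\gamma_{n(0,1)}.\varepsilon\to x_{\sigma_2}$. Then $x_n\bullet y_n=\gamma_{n(1,1)}.\varepsilon$, which by Lemma~\ref{L:limits_exist} has no limit in $Y_\Sigma$ because $(1,1)$ lies in no cone of $\Sigma$, and it certainly does not stabilize at the isolated point $\mathbf{0}$. So $\bullet$ is not continuous at $(x_{\sigma_1},x_{\sigma_2})$. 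For the continuity clause to hold, the topology on $Y_\Sigma^+$ would have to be modified (for example by attaching $\mathbf{0}$ as a point at infinity so that escaping sequences converge to it), and any correct proof would have to say so explicitly.
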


This may be proven using arguments similar to those of Proposition~\ref{P:Cone_monoid}, which we omit.

\subsection{Recovering the fan}\label{SS:Recove}

Before establishing our results on compact and projective irrational toric varieties, we study certain limits in an
irrational toric variety $Y_\Sigma$ and show that the fan $\Sigma$ may be recovered from these limits.

Let $\Sigma\subset N$ be a fan.
Let \defcolor{$\varepsilon$} be the distinguished point in the dense orbit of $T_N$ on $Y_\Sigma$.
In every affine patch $V_\sigma$ for $\sigma\in\Sigma$, this restricts to the constant homomorphism.
If $L$ is the minimal cone of $\Sigma$ (its lineality space), then $\varepsilon=x_L$; our notation avoids $L$.
We study limits of $\varepsilon$ in $Y_\Sigma$ under one-parameter subgroups $\gamma_{sv}$ of $T_N$, giving a global
version of Lemma~\ref{L:limitOfDistinguishedPoints}. 

\begin{lemma}\label{L:limits_exist}
 Let $v\in N$.
 Then the family of translates $\gamma_{sv}.\varepsilon$ for $s\in \RR$ has a limit in $Y_\Sigma$ as $s\to\infty$ if and
 only if there is a cone $\sigma\in\Sigma$ with $v\in\sigma$.
 When this limit exists, it equals $x_\tau$, where $\tau$ is the minimal cone of $\Sigma$ that contains $v$, so that
 $v\in\tau^\circ$. 
\end{lemma}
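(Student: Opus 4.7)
The plan is to reduce everything to the topology on a single affine patch $V_\sigma = \Hom_c(\sigma^\vee, \RR_\geq)$, where convergence means pointwise evaluation on $\sigma^\vee$. Since $\varepsilon$ lies in the dense orbit (which, by Theorem~\ref{Th:ITV_structure}, is contained in every $V_\sigma$), the entire family $\gamma_{sv}.\varepsilon$ lies in each $V_\sigma$, and its representative there is the monoid homomorphism sending $u \in \sigma^\vee$ to $\exp(-s\, u\cdot v)$.

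For the ``if'' direction, suppose $v \in \sigma$ for some cone $\sigma \in \Sigma$, and let $\tau$ be the unique face of $\sigma$ with $v \in \tau^\circ$; a short argument using that $\tau \cap \tau'$ is a common face of $\tau$ and $\tau'$ shows $\tau$ is the minimal cone of $\Sigma$ containing $v$. Working in $V_\tau$: since $v \in \tau = (\tau^\vee)^\vee$, we have $u\cdot v \geq 0$ for every $u \in \tau^\vee$, so $\exp(-s\, u\cdot v)$ converges pointwise as $s \to \infty$, to $1$ when $u\cdot v = 0$ and to $0$ otherwise. Because $v$ lies in the relative interior of $\tau$, the locus $\{u \in \tau^\vee \mid u\cdot v = 0\}$ is exactly the lineality space $\tau^\perp$ of $\tau^\vee$. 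Hence the pointwise limit is $\varepsilon_{\tau^\perp}$, which is precisely $x_\tau$, and continuity of this limit on its support $\tau^\perp$ places it in $V_\tau \subset Y_\Sigma$.

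For the ``only if'' direction, suppose $\lim_{s \to \infty} \gamma_{sv}.\varepsilon$ exists in $Y_\Sigma$ and equals some point $y$. By Corollary~\ref{Cor:affine_cover}, $y$ lies in some $V_\sigma$, which is open, and since the entire family lies in the dense orbit contained in $V_\sigma$, the convergence takes place inside $V_\sigma$. Pointwise convergence on $\sigma^\vee$ then forces $\exp(-s\, u\cdot v)$ to converge for every $u \in \sigma^\vee$, which requires $u\cdot v \geq 0$ for all such $u$ (otherwise the exponential diverges). Consequently $v \in (\sigma^\vee)^\vee = \sigma$, contradicting the hypothesis that no cone of $\Sigma$ contains $v$.

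The delicate point is ensuring that convergence in the gluing topology on $Y_\Sigma$ really can be read off pointwise inside a single $V_\sigma$. This rests on two facts already established: $V_\sigma$ is open in $Y_\Sigma$ (Corollary~\ref{Cor:affine_cover}), and the dense orbit is contained in every $V_\sigma$ (Theorem~\ref{Th:ITV_structure}), so the entire family $\gamma_{sv}.\varepsilon$ is trapped in each affine patch and the limit, when it exists, is detected by the point-evaluation topology on one such patch. Once these observations are in place, the proof reduces to the elementary asymptotics of the characters $\exp(-s\, u\cdot v)$.
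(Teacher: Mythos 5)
Your proof is correct and follows essentially the same route as the paper: compute $(\gamma_{sv}.\varepsilon)(u)=\exp(-s\,u\cdot v)$, analyze the pointwise limits on a dual cone, and read convergence off in a single affine patch. Your only departures are cosmetic---you work directly in $V_\tau$ for the minimal cone $\tau$ containing $v$ rather than $V_\sigma$, and you make the openness of the affine cover (Corollary~\ref{Cor:affine_cover}) explicit in the ``only if'' direction, which the paper leaves implicit---but the substance of the argument is the same.
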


\begin{proof}
 For $u\in M$ and $s\in\RR$, we have $(\gamma_{sv}.\varepsilon)(u)=\exp(-su\cdot v)$, as $\varepsilon(u)=1$.
 Thus
 \begin{equation}\label{Eq:limits}
   \lim_{s\to\infty}(\gamma_{sv}.\varepsilon)(u)\ =\ 
    \left\{\begin{array}{rcl}  0 &\ & \mbox{if }u\cdot v>0\\
                              1 &\ & \mbox{if }u\cdot v=0\\
                              \infty &\ & \mbox{if }u\cdot v<0\end{array}\right. .
 \end{equation}
 If there is a cone $\sigma\in\Sigma$ with $v\in\sigma$, then by~\eqref{Eq:limits} 
 $(\gamma_{sv}.\varepsilon)(u)$ has a limit as $s\to\infty$ for all $u\in\sigma^\vee$, and so the
 family $\gamma_{sv}.\varepsilon$ has a limit as $s\to\infty$ in the affine toric variety $V_\sigma$.
 (This is Lemma~\ref{L:limitOfDistinguishedPoints}.)
 Conversely, if $\gamma_{sv}.\varepsilon$ has a limit in $Y_\Sigma$ as $s\to\infty$, then there is an affine irrational toric variety
 $V_\sigma$ for some $\sigma\in\Sigma$ in which it has a limit.
 Then for all $u\in\sigma^\vee$, the family of real numbers 
 $(\gamma_{sv}.\varepsilon)(u)$ has a limit, which implies that $v\in\sigma$.

 The assertion identifying the limit is a consequence of the definition of $x_\tau$ 
 and of~\eqref{Eq:limits}. 
\end{proof}

Let $\Sigma\subset N$ be a fan.
Let $\defcolor{|\Sigma|}\subset N$ be the set of $v\in N$ such that $\gamma_{sv}.\varepsilon$ has a limit in $Y_\Sigma$ as
$s\to\infty$.
We define an equivalence relation on $|\Sigma|$.
For $v,w\in |\Sigma|$ we declare
\[
   v\ \sim\ w\ \mbox{ if and only if }\ 
   \lim_{s\to\infty}\gamma_{sv}.\varepsilon\ =\ 
   \lim_{s\to\infty}\gamma_{sw}.\varepsilon\,.
\]
By Lemma~\ref{L:limits_exist}, the set $|\Sigma|$ is the support of the fan $\Sigma$ and the equivalence classes are the
relative interiors of cones of $\Sigma$.
In fact, a cone $\sigma\in\Sigma$ is the closure of the set of $u\in |\Sigma|$ such that 
$\lim_{s\to\infty}\gamma_{sv}.\varepsilon = x_\sigma$.
Since these limits commute with the action of the torus $T_N$, we may replace $\varepsilon$ in the definition of $\sim$  by
any point $y$ in the dense orbit of $Y_\Sigma$.
Similarly, we may identify the cone $\sigma$ as the closure of 
the set of $u\in |\Sigma|$ such that for any $y$ in the dense orbit of $Y_\Sigma$, 
$\lim_{s\to\infty}\gamma_{sv}.y \in U_\sigma$.
We summarize this discussion.

\begin{corollary}\label{C:recoverFan}
 The fan $\Sigma\subset N$ may be recovered from the irrational toric variety $Y_\Sigma$ using limits under translation
 by one-parameter subgroups $\gamma_{sv}$ of elements $y$ in the dense orbit.
\end{corollary}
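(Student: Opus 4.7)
The plan is essentially to crystallize the discussion preceding the corollary: everything needed is already provided by Lemma~\ref{L:limits_exist} together with the orbit structure from Theorem~\ref{Th:ITV_structure}, and the proof amounts to giving intrinsic recipes in terms of the data $(Y_\Sigma, T_N, \varepsilon)$ for recovering the support $|\Sigma|$, the relative interiors of the cones, and the face poset.

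First I would define $|\Sigma| \subset N$ as the set of $v$ for which the translated family $\gamma_{sv}.\varepsilon$ admits an $s\to\infty$ limit in $Y_\Sigma$; Lemma~\ref{L:limits_exist} immediately identifies this subset with the support of the fan. Second, introduce the equivalence relation $v \sim w$ on $|\Sigma|$ by declaring $v \sim w$ precisely when $\lim_{s\to\infty}\gamma_{sv}.\varepsilon = \lim_{s\to\infty}\gamma_{sw}.\varepsilon$; by the second assertion of Lemma~\ref{L:limits_exist}, which identifies the limit as $x_\tau$ for the unique cone $\tau$ containing $v$ in its relative interior, the $\sim$-classes are exactly the relative interiors $\sigma^\circ$ for $\sigma \in \Sigma$. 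Third, recover each cone $\sigma$ as the closure in $N$ of any such equivalence class, and recover the face relation from the containment $U_\sigma \subset \overline{U_\tau}$ of the corresponding orbits in $Y_\Sigma$, which is dual to the face relation $\tau \subset \sigma$ by Theorem~\ref{Th:ITV_structure}.

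To finish, I would verify that the procedure does not depend on a preferred choice of base point in the dense orbit. For any $y = t.\varepsilon$ with $t \in T_N$, commutativity of $T_N$ gives $\gamma_{sv}.y = t.(\gamma_{sv}.\varepsilon)$, and continuity of the action then shows that $\lim_{s\to\infty}\gamma_{sv}.y$ exists in $Y_\Sigma$ if and only if $\lim_{s\to\infty}\gamma_{sv}.\varepsilon$ does, with the two limits lying in the same $T_N$-orbit $U_\sigma$. Hence one may equivalently reconstruct $\Sigma$ by recording, for each $v$ in the convergence locus, the orbit in which $\lim_{s\to\infty}\gamma_{sv}.y$ lands, using any fixed $y$ in the dense orbit. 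There is no real obstacle in the argument; the substantive analytic content is already packaged in Lemma~\ref{L:limits_exist}, and this corollary is a bookkeeping statement assembling those limits into the intrinsic data of the fan.
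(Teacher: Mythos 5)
Your proposal is correct and follows the paper's approach exactly: define $|\Sigma|$ via convergence of $\gamma_{sv}.\varepsilon$, identify equivalence classes with relative interiors of cones via Lemma~\ref{L:limits_exist}, recover cones as closures, and use commutativity of $T_N$ to replace $\varepsilon$ by an arbitrary point in the dense orbit. The paper's corollary is precisely a summary of this discussion, and your write-up matches it step for step.
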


\subsection{Compact irrational toric varieties}\label{SS:Complete}

A classical toric variety $X_\Sigma$ is a proper scheme over $\spec\ZZ$ if and only if the rational fan $\Sigma$ is
\demph{complete} (every point of $N$ lies in some cone of $\Sigma$, so that $N=|\Sigma|$).
The analogous result holds for an irrational toric variety $Y_\Sigma$.

\begin{theorem}\label{Th:complete}
 Let $\Sigma\subset N$ be a fan.
 The irrational toric variety $Y_\Sigma$ is a compact topological space if and only if the fan $\Sigma$ is complete. 
\end{theorem}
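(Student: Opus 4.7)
The plan is to use sequential compactness throughout. Each affine chart $V_\sigma$ is first countable, since it is homeomorphic to the cone $\sigma^\vee$ by Lemma~\ref{L:HomIsAffine} and Birch's theorem, and $Y_\Sigma$ is covered by finitely many such charts by Corollary~\ref{Cor:affine_cover}; hence $Y_\Sigma$ is itself first countable, and compactness amounts to the existence of a convergent subsequence for every sequence in $Y_\Sigma$.

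For the forward implication I would argue by contrapositive. Fix $v\in N\setminus|\Sigma|$ and consider the sequence $y_n:=\gamma_{nv}.\varepsilon$. If some subsequence $y_{n_k}$ converged to a point $y\in V_\sigma$, then for every $u\in\sigma^\vee$ the sequence $y_{n_k}(u)=\exp(-n_k\,u\cdot v)$ would converge in $\RR_\geq$; since $n_k\to\infty$ this forces $u\cdot v\geq 0$ for all $u\in\sigma^\vee$, whence $v\in(\sigma^\vee)^\vee=\sigma$, contradicting the choice of $v$. This is essentially a sub-sequential refinement of Lemma~\ref{L:limits_exist}.

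For the reverse implication I would induct on $d:=\dim N-\dim L$, where $L$ is the lineality of $\Sigma$; the base case $d=0$ is trivial since then $\Sigma=\{N\}$ and $Y_\Sigma$ is a point. Two auxiliary facts would set up the induction. First, completeness of $\Sigma$ implies completeness of the quotient fan $\Star(\sigma)/\langle\sigma\rangle$ in $N/\langle\sigma\rangle$ for every $\sigma\in\Sigma$: lift $\bar v\in N/\langle\sigma\rangle$ to $v\in N$, add a large multiple of some $w\in\sigma^\circ$, and use completeness of $\Sigma$ to place the shift in a cone that must then contain $\sigma$. Second, the closed orbit $\overline{U_\sigma}=Y_{\Star(\sigma)}$ from Corollary~\ref{C:Star} is naturally homeomorphic to the irrational toric variety $Y_{\Star(\sigma)/\langle\sigma\rangle}$ of that quotient fan. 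Together these facts make $\overline{U_\sigma}$ compact by the inductive hypothesis whenever $\sigma\neq L$.

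Given a sequence $\{y_k\}$ in $Y_\Sigma$, pigeonhole over the finite orbit decomposition puts it in some single $U_\rho$; if $\rho\neq L$ we are done inside the compact $\overline{U_\rho}$. Otherwise we reduce to $L=0$ and write $y_k=\gamma_{v_k}.\varepsilon$: a bounded subsequence of $\{v_k\}$ converges in the dense orbit, so assume $|v_k|\to\infty$ and $v_k/|v_k|\to\hat v$, and use completeness to pick $\sigma\neq 0$ with $\hat v\in\sigma^\circ$. I would then introduce the continuous \emph{projection} $p\colon V_\sigma\to V_\sigma$ defined by $p(\varphi)(u)=\varphi(u)$ for $u\in\sigma^\perp$ and $p(\varphi)(u)=0$ for $u\in\sigma^\vee\setminus\sigma^\perp$; then $p(y_k)\in U_\sigma\subset\overline{U_\sigma}$, so by the inductive hypothesis a subsequence $p(y_{k_j})\to y^*\in\overline{U_\sigma}$. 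The step I expect to be the main obstacle is verifying that $y_{k_j}\to y^*$ in $Y_\Sigma$ as well: for $u\in\sigma^\vee\setminus\sigma^\perp$ we have $u\cdot\hat v>0$ and hence $y_{k_j}(u)\to 0$, while on $\sigma^\perp$ the sequences $y_{k_j}$ and $p(y_{k_j})$ agree; when $y^*\in U_\sigma$ this suffices, but when $y^*\in U_\tau$ for some $\tau\supsetneq\sigma$ one must work in $V_\tau$ and transfer the quotient-level convergence $p(y_{k_j})\to y^*$ through the identification $\overline{U_\sigma}\simeq Y_{\Star(\sigma)/\langle\sigma\rangle}$ in order to control the pairings $u\cdot v_{k_j}$ for $u\in\sigma^\perp\cap\tau^\vee$. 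This careful bookkeeping between the ambient space and its quotient by $\langle\sigma\rangle$ is the technical heart of the argument.
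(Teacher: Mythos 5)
Your argument is correct, but in the reverse direction you take a genuinely different route from the paper, so here is a comparison.

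Forward direction: your contrapositive via subsequences is essentially the paper's argument, and in fact slightly more careful --- the paper asserts that compactness forces $\gamma_{sv}.\varepsilon$ to \emph{have a limit} as $s\to\infty$, whereas you correctly extract only an accumulation point $y\in V_\sigma$ along a subsequence $n_k\to\infty$ and then deduce $v\in(\sigma^\vee)^\vee=\sigma$. Both go through Lemma~\ref{L:limits_exist}.

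Reverse direction: the paper's proof, after the same pigeonhole reductions (to a single orbit $U_\rho$, then to $\overline{U_\rho}=Y_{\Star(\rho)}$, then to $v_n\in\sigma^\circ$), finds the correct limit orbit \emph{in one step} by invoking \cite[Lem.~1.3]{PSV}: the faces $\tau$ of $\sigma$ for which $\{v_n\}$ is bounded in $N/\langle\tau\rangle$ form an order ideal, so there is a unique minimal such $\tau$. It then writes $v_n=b_n+c_n$ with $b_n$ bounded (converging to $b$) and $c_n\in\tau$, and directly computes $\lim y_n=\gamma_b.x_\tau$ in the chart $V_\tau$. Your proof replaces the appeal to the order-ideal lemma by an induction on $\dim N-\dim L$, using the normalization $v_k/|v_k|\to\hat v$ to locate a nonzero cone $\sigma\ni\hat v$ and the retraction $p\colon V_\sigma\to U_\sigma$ (restrict to $\sigma^\perp$, kill the rest) to push the sequence into $\overline{U_\sigma}\cong Y_{\Star(\sigma)/\langle\sigma\rangle}$, where the inductive hypothesis applies. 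What you buy is self-containment: no reliance on the external lemma, and a conceptually pleasant use of the monoid retraction $p$. What the paper's route buys is a direct identification of the limit point $\gamma_b.x_\tau$ and the avoidance of the quotient bookkeeping that you correctly flag as the crux. That crux does check out: if $p(y_{k_j})\to y^*\in U_\tau$ with $\tau\supsetneq\sigma$, then for $u\in\tau^\vee\cap\sigma^\perp$ one has $y_{k_j}(u)=p(y_{k_j})(u)\to y^*(u)$ because the relevant chart of $\overline{U_\sigma}$ is $\Hom_c(\sigma^\perp\cap\tau^\vee,\RR_\geq)$, while for $u\in\tau^\vee\setminus\sigma^\perp$ one has $u\cdot\hat v>0$ (since $\hat v\in\sigma^\circ$), hence $y_{k_j}(u)\to 0=y^*(u)$ because $\supp(y^*)=\tau^\perp\subset\sigma^\perp$; together this is exactly pointwise convergence on $\tau^\vee$, i.e.\ convergence in $V_\tau$. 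Your auxiliary claim that $\Star(\sigma)/\langle\sigma\rangle$ is complete is also correct for the reason you give. So the proposal is sound; it just trades the external lemma of \cite{PSV} for an induction and the retraction~$p$.
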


\begin{proof}
 Suppose that $Y_\Sigma$ is compact.
 Recall the definitions and notation preceding Lemma~\ref{L:limits_exist}.
 As $Y_\Sigma$ is compact, for every $v\in N$, the family $\gamma_{sv}.\varepsilon$ has a limit in  $Y_\Sigma$ as
 $s\to\infty$.  
 By Lemma~\ref{L:limits_exist}, there is some cone $\sigma$ of $\Sigma$ with $v\in\sigma$,
 which implies that $\Sigma$ is a complete fan.

 Suppose now that the fan $\Sigma$ is complete.
 We prove that $Y_\Sigma$ is compact by showing that every sequence $\{y_n\mid n\in\NN\}\subset Y_\Sigma$ has a subsequence
 that converges in $Y_\Sigma$.
 To that end, we will replace $\{y_n\mid n\in\NN\}$ by a subsequence with desirable properties several times.
 By Theorem~\ref{Th:ITV_structure}, $Y_\Sigma$ is a disjoint union of finitely many orbits of $\RR_>^n$, one orbit
 $U_\sigma$ for each cone $\sigma$ of $\Sigma$.
 Thus there is some orbit $U_\sigma$ whose intersection with $\{y_n\mid n\in\NN\}$ is infinite.
 Replacing $\{y_n\mid n\in\NN\}$ by a subsequence, we may assume that $\{y_n\mid n\in\NN\}\subset U_\sigma$.
 By Corollary~\ref{C:Star}, $\overline{U_\sigma}=Y_{\Star(\sigma)}$.
 By its construction, if $\Sigma$ is complete, then $\Star(\sigma)$ is also complete.
 Replacing $Y_\Sigma$ by $Y_{\Star(\sigma)}$, we may assume that $\{y_n\mid n\in\NN\}$ lies in the dense orbit of $Y_\Sigma$.

 The dense orbit of $Y_\Sigma$ is parameterized by $N$ under the map $v\mapsto \gamma_v.\varepsilon$.
 For each $n\in\NN$, choose a point $v_n\in N$ such that $y_n=\gamma_{v_n}.\varepsilon$.
 This gives a sequence $\{v_n\mid n\in\NN\}\subset N$.
 Since $\Sigma$ is complete, $N$ is the finite disjoint union of the relative interiors $\sigma^\circ$ of cones $\sigma$ of
 $\Sigma$.
 There is some cone $\sigma$ such that $\sigma^\circ\cap\{v_n\mid n\in\NN\}$ is infinite.
 Replacing $\{v_n\mid n\in\NN\}$ by its intersection with $\sigma^\circ$, we may assume that 
 $\{v_n\mid n\in\NN\}\subset\sigma^\circ$. 

 For a face $\tau$ of $\sigma$, we say that $\{v_n\mid n\in\NN\}$ is \demph{$\tau$-bounded} if its image in the quotient 
 $N/\langle\tau\rangle$ has a bounded subsequence.
 By~\cite[Lem.~1.3]{PSV}, the set of faces $\tau$ of $\sigma$ for which $\{v_n\mid n\in\NN\}$ is $\tau$-bounded forms an
 order ideal. 
 Let $\tau$ be a minimal face of $\sigma$ for which $\{v_n\mid n\in\NN\}$ is $\tau$-bounded.
 Replace $\{v_n\mid n\in\NN\}$ by a subsequence whose image in $N/\langle\tau\rangle$ is bounded.
 As in~\cite[Ex.~4.1]{PSV}, there is a bounded set $B\subset \sigma\subset N$ such that 
 $\{v_n\mid n\in\NN\}\subset B+\tau$, and thus there are sequences
 $\{b_n\mid n\in\NN\}\subset B$ and $\{c_n\mid n\in\NN\}\subset\tau$ with $v_n=b_n+c_n$.
 Since $\{b_n\mid n\in\NN\}$ is bounded, we may further pass to a convergent subsequence with limit $b\in\sigma$.

 Replacing all sequences by their corresponding subsequences, we claim that in $V_\tau$, 
 \begin{equation}\label{Eq:main_limit}
  \lim_{n\to\infty} y_n\ =\ \gamma_b.x_\tau\ \in\ U_\tau\,,
 \end{equation}
 which will complete the proof.

 Consider the sequence $\{y_n\mid n\in\NN\}$ as a subset of $V_\tau=\Hom_c(\tau^\vee,\RR_\geq)$.
 For $u\in\tau^\vee$, the proof of Lemma~\ref{L:limits_exist} shows that $y_n(u) =\exp(-u\cdot v_n)$, as
 $y_n=\gamma_{v_n}.\varepsilon$. 
 Since $v_n=b_n+c_n$,  
\[
  y_n(u)\ =\ 
  \exp(- u\cdot v_n)\ =\  \exp(- u\cdot b_n)\cdot \exp(- u\cdot c_n)\,.
\]
 If $u\in\tau^\perp$, then $u\cdot c_n=0$, so that 
\[
  \lim_{n\to\infty} y_n(u)\ =\
  \lim_{n\to\infty}\exp(- u\cdot b_n)\ =\  \exp(- u\cdot b)\,.
\]
 If $u\in\tau^\vee{\smallsetminus}\tau^\perp$, then $u$ exposes a proper face of $\tau$, and 
 the minimality of $\tau$ implies that $u\cdot v_n$ has no bounded subsequence.
 But then $u\cdot c_n$ has no bounded subsequence.
 Since $u\cdot c_n\geq 0$, we conclude that $u\cdot c_n$ has limit $+\infty$ as $n\to\infty$.
 Thus 
\[
  \lim_{n\to\infty} y_n(u)\ =\
  \lim_{n\to\infty}  \exp(- u\cdot v_n)\ =\  0\,.
\]
 These calculations together establish~\eqref{Eq:main_limit}, and complete the proof.
\end{proof}

\subsection{Projective irrational toric varieties}\label{SS:Projective}


The analog of projective space for irrational toric varieties is the standard simplex
 \begin{equation}\label{Eq:simplex}
   \defcolor{\bsimplex^n}\ :=\ 
   \bigl\{ (u_0,u_1,\dotsc,u_n)\in\RR^{n+1}_\geq \mid {\textstyle \sum} u_i = 1\bigr\}\,.
 \end{equation}
As any ray in the orthant $\RR^{n+1}_\geq$ meets $\simplex^n$ in a unique point, we may identify $\simplex^n$ with the set
of rays.
This implies that $\simplex^n=(\RR^{n+1}_\geq{\smallsetminus}\{0\})/\RR_>$, the quotient under multiplication by positive
scalars.
As in \S~\ref{S:Affine}, it is convenient to write $\simplex^\calA\subset\RR^\calA_\geq$, where $\calA\subset M$ is 
finite.

\begin{example} \label{Ex:ProjectiveSpace}
 The simplex has the structure of an irrational toric variety associated to a fan.
 For this, let $\defcolor{[n]}:=\{0,1,\dotsc,n\}$ and $e_0,\dotsc,e_n$ be the standard basis for
 $\RR^{[n]}\simeq\RR^{n+1}$.
 Letting  $f_0,\dotsc,f_n\in\RR^{[n]}$ be the dual basis, the standard simplex
 $\defcolor{\bsimplex^{[n]}}\subset\RR^{[n]}$ is their convex hull, $\conv\{f_0,\dotsc,f_n\}$, which equals the 
 standard simplex $\simplex^n$~\eqref{Eq:simplex}.
 Define a complete fan $\Sigma_{[n]}\subset\RR^{[n]}$ with one cone $\sigma_I$ for each proper subset 
 $I\subsetneq[n]$ defined by 
\[
   \defcolor{\sigma_I}\ :=\ \cone\{e_i\mid i\in I\}\ +\ \RR \bbI\,,
\]
 where $\defcolor{\bbI}:=e_0+\dotsb+e_n$.
 Note that $\sigma_\emptyset=\RR\bbI$.
 We have the irrational toric variety $Y_{\Sigma_{[n]}}$.
 
 The hyperplane $\{u\in \RR^{[n]}\mid u\cdot \bbI =0\}$ contains all dual cones $\sigma^\vee_I$ and is spanned by the
 differences $f_i{-}f_j$ for $i,j\in [n]$.
 Set $\defcolor{V_I}:=\Hom_c(\sigma_I^\vee,\RR_\geq)$.
 For $I\subsetneq[n]$,  $j\not\in I$, and $\varphi\in V_I$, set 
\[
   \psi_I(\varphi)\ =\ \RR_\geq\Bigl( \sum_{i\in [n]} \varphi(f_i{-}f_j) f_i \Bigr)\ \cap\ \simplex^{[n]}\,,
\]
 the intersection of the ray through $\sum_{i} \varphi(f_i{-}f_j)f_i$ with the simplex $\simplex^{[n]}$.
 This injective map does not depend on the choice of $j\not\in I$ and defines a map $\psi_I\colon V_I\to\simplex^{[n]}$.
 These maps $\psi_I$ are compatible with the gluing in that if $y\in Y_{\Sigma_{[n]}}$ lies in two affine patches $V_I$ and
 $V_J$, then $\psi_I(y)=\psi_J(y)$. 
 Thus these maps induce a homeomorphism $\Psi\colon Y_{\Sigma_{[n]}}\xrightarrow{\sim}\simplex^{[n]}$.
 (This will be explained in greater generality in the proof of Theorem~\ref{Th:ProjITV}.)

 The quotient map  $\RR^{[n]}_\geq{\smallsetminus}\{0\}\twoheadrightarrow\simplex^{[n]}$ may be understood in terms of
 Theorem~\ref{Th:mapsOfFans}. 
 Let $\defcolor{\Sigma'_{[n]}}\subset\RR^{[n]}$ be the fan consisting of the boundary of the nonnegative orthant.
 Its cones are 
\[
   \sigma'_I\ :=\ \cone\{ e_i\mid i\in I\}\,,
\]
 for all proper subsets $I\subsetneq[n]$.
 The irrational toric variety $Y_{\Sigma'_{[n]}}$ is $\RR^{n+1}_\geq{\smallsetminus}\{0\}$, as the orbit consisting of the origin in
 $\RR^{n+1}_\geq$ corresponds to the omitted full-dimensional cone.
 As $\sigma'_I\subset \sigma_I$, these inclusions induce a map of fans $\Psi\colon\Sigma'_{[n]}\to\Sigma_{[n]}$, and thus
 a functorial map of toric varieties $Y_{\Sigma'_{[n]}}\to Y_{\Sigma_{[n]}}$, which is the 
 quotient map  $\RR^{[n]}_\geq{\smallsetminus}\{0\}\twoheadrightarrow\simplex^{[n]}$.\hfill$\diamond$
\end{example}

Suppose that $\calA\subset M$ lies on an affine hyperplane in that there is some $v\in N$ and $0\neq r\in\RR$ with 
$a\cdot v=r$ for all $a\in\calA$.
Thus for $t\in T_N$ and $s\in\RR$, we have 
\[
  \varphi_\calA(\gamma_{sv}.t)\ =\ (\gamma_{sv}(a) t^a\mid a\in\calA)
    \ =\ \exp(-sr) \varphi_\calA(t)\,,
\]
as $\gamma_{sv}(a)=\exp(-s a\cdot v)=\exp(-sr)$ for $a\in\calA$ and $s\in\RR$.
Consequently, the affine irrational toric variety $Y_\calA\subset\RR^\calA_\geq$ is a union of rays.
Define the \demph{projective irrational toric variety} \defcolor{$Z_\calA$} to be the intersection 
$Y_\calA\cap\simplex^{\calA}$, equivalently, the quotient $(Y_\calA\smallsetminus\{0\})/\RR_>$ under scalar multiplication.
This has an action of $T_N$ with a dense orbit, as the action of $T_N$ on $\RR^\calA_\geq$ through $\varphi_\calA$ gives an
action on rays and hence on $\simplex^\calA$, which restricts to an action on $Z_\calA$.

The restriction of the tautological map $\pi_\calA\colon\RR^\calA_\geq\to\cone(\calA)$ to the simplex $\simplex^\calA$ is
the canonical parametrization of the convex hull of $\calA$,
\[
   \simplex^\calA\ni u\ \longmapsto\ \sum_a u_a a\ \in\ \conv(\calA)\,.
\]
By Birch's Theorem (Proposition~\ref{P:Birch}), restricting to the projective toric variety $Z_\calA$ gives a homeomorphism
$\pi_\calA\colon Z_\calA\xrightarrow{\sim}\conv(\calA)$, called the \demph{algebraic moment map}~\cite{vilnius}.
This isomorphism is also essentially proven by Krasauskas~\cite[Thm.~26]{Krasauskas}.

A \demph{polytope} $P\subset M$ is a set that is the convex hull, $P=\conv(\calA)$, of a finite subset $\calA\subset M$.
For any $v\in N$, the subset $F$ of $P$ where the linear function $u\mapsto u\cdot v$ is minimized is the 
\demph{face exposed} by $v$, and every face of $P$ is exposed by some element of $N$.
For example, the polytope $P$ is exposed by the zero vector.

For a face $F$ of $P$, the set of $v\in N$ that expose a face containing $F$ forms a polyhedral
cone \defcolor{$\sigma_F$} in $N$, whose relative interior consists of those $v$ which expose $F$.
The faces of $\sigma_F$ are cones $\sigma_E$ for $E$ a face of $P$ containing $F$.
The collection of these cones $\sigma_F$ for the faces $F$ of $P$ forms the (inner) \demph{normal fan} to $\Delta$, which
is a complete fan.
For example, 
the fan $\Sigma_{[n]}$ of
Example~\ref{Ex:ProjectiveSpace} is the normal fan of the standard simplex $\simplex^{[n]}$.
Figure~\ref{F:NormalFans} shows two views of a polytope and its normal fan.
\begin{figure}[htb]
  \centering
  \raisebox{10pt}{\includegraphics[height=70pt]{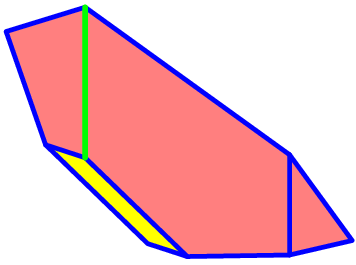}}\qquad
  \includegraphics[height=90pt]{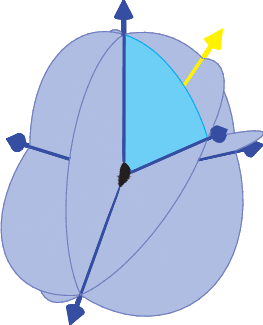}\qquad\qquad
  \raisebox{5pt}{\includegraphics[height=80pt]{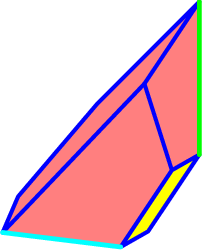}}\qquad
  \includegraphics[height=90pt]{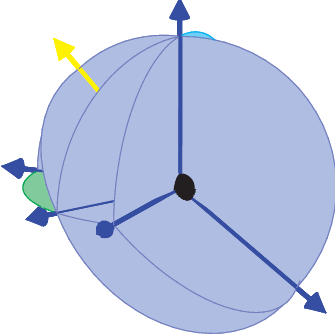}
  \caption{Two views of a polytope and its normal fan.}
  \label{F:NormalFans}
\end{figure}
The yellow ray in the normal fan exposes the yellow facet, 
the green cone exposes the green edge, and
the cyan cone exposes the cyan edge.

\begin{theorem}\label{Th:ProjITV}
 Suppose that $P\subset M$ is a polytope lying on an affine hyperplane with normal fan $\Sigma$.
 For any $\calA\subset P$ with $\conv(\calA)=P$, there is an injective map of irrational toric varieties 
 $\Psi_\calA\colon Y_\Sigma\to\simplex^\calA$ whose image is the projective irrational toric variety $Z_\calA$.
 The map $\Psi_\calA$ composed with the algebraic moment map $\pi_\calA$ is a homeomorphism
 $Y_\Sigma\xrightarrow{\sim}P$.
\end{theorem}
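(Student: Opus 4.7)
The plan is to construct $\Psi_\calA$ affine-patch by affine-patch, show the local definitions assemble into a continuous equivariant map $Y_\Sigma \to \simplex^\calA$ with image in $Z_\calA$, and then use compactness of $Y_\Sigma$ together with Birch's Theorem to upgrade this to a homeomorphism. Since $\Sigma$ is the normal fan of $P$, each cone $\sigma \in \Sigma$ corresponds to a face $F_\sigma$ of $P$, and for any $p \in F_\sigma$ the relation $w \cdot a \geq w \cdot p$ for $w \in \sigma$, $a \in P$ shows that $a - p \in \sigma^\vee$, with $a-p \in \sigma^\perp$ exactly when $a \in F_\sigma$. On $V_\sigma = \Hom_c(\sigma^\vee, \RR_\geq)$ I fix a vertex $p$ of $F_\sigma$ and define
\[
   \Psi_\calA^\sigma(\varphi)\ :=\ \RR_>\bigl(\varphi(a-p)\bigr)_{a \in \calA}\ \cap\ \simplex^\calA\,,
\]
a well-defined point of the simplex because $\varphi(0) = 1$ makes the coordinate at $a = p$ nonzero.

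First I would check that $\Psi_\calA^\sigma$ is independent of the chosen vertex: any other vertex $p'$ of $F_\sigma$ satisfies $p - p' \in \sigma^\perp$, a linear subspace of $\sigma^\vee$ on which $\varphi$ is positive by Example~\ref{Ex:LinSpace}, so $(\varphi(a-p'))_a = \varphi(p-p')\cdot (\varphi(a-p))_a$ is a positive rescaling. Compatibility with the inclusion $V_\tau \hookrightarrow V_\sigma$ of Lemma~\ref{L:inclusion} when $\tau$ is a face of $\sigma$ then follows, since $F_\tau \supset F_\sigma$ lets us use the same vertex $p$ on both patches, and the inclusion simply restricts $\varphi$ from $\tau^\vee$ to $\sigma^\vee$. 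Hence the local maps glue into a continuous $T_N$-equivariant map $\Psi_\calA \colon Y_\Sigma \to \simplex^\calA$. To see the image lies in $Z_\calA$: because $\calA$ lies on an affine hyperplane, any relation $\sum \lambda_a a = \sum \mu_a a$ in $M$ automatically satisfies $\sum \lambda_a = \sum \mu_a$, whence $\sum \lambda_a (a-p) = \sum \mu_a (a-p)$ in $\sigma^\vee$; applying $\varphi$ yields the binomial equations of Proposition~\ref{P:IATV} that cut out $Z_\calA$ inside $\simplex^\calA$.

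The main geometric step is to verify that $\pi_\calA \circ \Psi_\calA \colon Y_\Sigma \to P$ is a bijection. For $\varphi \in U_\sigma$, Lemma~\ref{L:HomFace} gives $\supp(\varphi) = \sigma^\perp$, so $\varphi(a-p) \neq 0$ if and only if $a \in F_\sigma$. Setting $\calF_\sigma := F_\sigma \cap \calA$, which satisfies $\conv(\calF_\sigma) = F_\sigma$, the composition on $U_\sigma$ becomes
\[
   \varphi\ \longmapsto\ \frac{\sum_{a \in \calF_\sigma} \varphi(a-p)\, a}{\sum_{a \in \calF_\sigma} \varphi(a-p)}\ \in\ F_\sigma^\circ\,.
\]
Under the identification $U_\sigma \simeq N/\langle\sigma\rangle$ of Theorem~\ref{Th:ITV_structure}, this is, after cancelling the common factor involving $p$, the algebraic moment map for $\calF_\sigma$, which Birch's Theorem (Proposition~\ref{P:Birch}) identifies as a homeomorphism onto $F_\sigma^\circ$. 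Since $\sigma \leftrightarrow F_\sigma$ is a bijection between the cones of $\Sigma$ and the faces of $P$, and since $Y_\Sigma = \bigsqcup_\sigma U_\sigma$ while $P = \bigsqcup_F F^\circ$, these orbit-wise bijections assemble into a global bijection $Y_\Sigma \to P$.

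Finally, because $\Sigma$ is the normal fan of a polytope it is complete, so Theorem~\ref{Th:complete} implies $Y_\Sigma$ is compact. A continuous bijection from a compact space to a Hausdorff space is a homeomorphism, so $\pi_\calA \circ \Psi_\calA$ is a homeomorphism $Y_\Sigma \to P$; composing with the inverse of the algebraic moment map $\pi_\calA \colon Z_\calA \to P$ then shows that $\Psi_\calA$ itself is a homeomorphism $Y_\Sigma \to Z_\calA$, in particular injective with image $Z_\calA$. I expect the most delicate part to be the gluing check when neighbouring patches $V_\sigma, V_{\sigma'}$ naturally use different reference vertices $p, p'$: on their shared face $V_\tau$ both vertices lie in $F_\tau$, and vertex-independence on $V_\tau$ is exactly what reconciles the two formulas.
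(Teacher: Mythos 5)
Your proposal is correct and follows essentially the same route as the paper: both define $\Psi_\calA$ patch by patch via $\varphi \mapsto \bigl(\varphi(a - f)\bigr)_{a \in \calA}$ for a reference point $f$ in the face corresponding to the cone (you fix a vertex; the paper allows any $f \in \calF$), both check that changing the reference point rescales the tuple by the positive factor $\varphi(f' - f)$ so the resulting ray in $\RR^\calA_\geq$ is unchanged, and both verify gluing compatibility along $V_\tau \hookrightarrow V_\sigma$. The one point where you diverge mildly is in establishing that $\Psi_\calA$ is injective with image $Z_\calA$: the paper proves injectivity directly by comparing supports of monoid homomorphisms attached to incomparable faces and then identifies the image with $Z_\calA$ by comparing the two compact sets on a common dense orbit, whereas you show $\pi_\calA \circ \Psi_\calA$ restricts to a bijection $U_\sigma \to F_\sigma^\circ$ on each orbit via Birch's theorem for the face configuration $\calF_\sigma$, assemble these over the orbit/face stratifications, and then invoke compactness of $Y_\Sigma$ (via Theorem~\ref{Th:complete}) to conclude. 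Both routes are valid and of comparable length; your orbit-wise version makes the role of Birch's theorem on each stratum explicit, while the paper's version keeps Birch's theorem for the end and front-loads a cleaner set-theoretic injectivity argument.
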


When $\Sigma$ is a rational fan, this is a standard result about the nonnegative part of projective toric varieties,
polytopes with integer vertices, and the classical moment map.

\begin{proof}
 Let $\calA\subset P$ be a subset with $\conv(\calA)=P$.
 For a face $F$ of $P$, we have $\conv(F\cap\calA)=F$.
 An intersection $\calF:=F\cap\calA$ for a face $F$ of $P$ a \demph{face} of $\calA$.
 Cones of the normal fan $\Sigma$ to $P$ correspond to faces $\calF$ of $\calA$.
 For a face $\calF$ of $\calA$, the corresponding cone in $\Sigma$ is 
\[
   \defcolor{\sigma_\calF}\ =\ \{ v\in N\mid f\cdot v \leq a\cdot v\ \mbox{for all } f\in\calF\mbox{ and } a\in\calA\}\,.
\]
 The lineality space of $\Sigma$ is spanned by those $v\in N$ such that $a\cdot v=b\cdot v$ for any $a,b\in\calA$.

 For a subset $\calB\subset\calA$ and any $u\in M$, we define $\defcolor{\calB{-}u}:=\{b{-}u\mid b\in\calB\}$. 
 Duals of cones $\sigma_\calF$ lie in the subspace $L$ of $M$ spanned by the differences $\{b{-}a\mid a,b\in\calA\}$,
 equivalently by $\calA{-}a$ for any $a\in\calA$.
 For a face $\calF$ of $\calA$, the cone dual to $\sigma_\calF$ is 
\[
   \sigma_\calF^\vee\ =\ 
     \cone(\calA{-}f)\ +\ \RR(\calF{-}f)\,,
\]
 for any $f\in\calF$.
 Choosing another $f'\in\calF$ translates the points $\calA{-}f$ along the lineality space $\RR(\calF{-}f)$.
 Figure~\ref{F:calFdual} shows an example of $\sigma_\calF^\vee$.
\begin{figure}[htb]
 \centering
  \begin{picture}(170,90)
    \put(0,0){\includegraphics{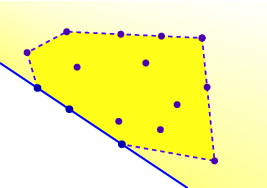}}
    \put(110,58){$\sigma_\calF^\vee$}
    \put(51,45){$\calA{-}f$}

    \put(11.2,21){\vector(1,4){5.5}}   
     \put(20,20){\vector(3,4){11}}    
    \put(29,13.5){\vector(4,1){25}}
    \put(55,9){$0$}
    \put(0,10){$\calF{-}f$}    
    \put(139,2){$\RR(\calF{-}f)$}   \put(137,6){\vector(-1,0){51}}
  \end{picture}

 \caption{A dual cone $\sigma_\calF^\vee$.}
 \label{F:calFdual}

\end{figure}
 The affine toric variety $V_\calF$ corresponding to a face $\calF$ of $\calA$ is $\Hom_c(\sigma_\calF^\vee,\RR_\geq)$. 

 Let $f\in\calF$ and consider the map
\[
   \defcolor{\psi_f}\ \colon\ V_\calF\ \ni\ \varphi\ \longmapsto\ 
     ( \varphi(a{-}f)\mid a\in\calA)\ \in\ \RR^\calA_\geq\,.
\]
 Since for $f,f'\in\calF$, $a\in\calA$, and $\varphi\in V_\calF$, we have $\varphi(a{-}f)=\varphi(f'{-}f)\varphi(a{-}f')$,
 it follows that $\psi_{f}(\varphi)=\varphi(f'{-}f)\psi_{f'}(\varphi)$.
 That is, the two points $\psi_{f}(\varphi)$ and $\psi_{f'}(\varphi)$ lie along the same ray in $\RR^\calA_\geq$.
 Consequently, the map  $V_\calF\to\simplex^\calA$ defined by 
 \begin{equation}\label{Eq:psicalF}
   V_\calF\ \ni\ \varphi\ \longmapsto\ (\RR_\geq\psi_f(\varphi))\cap\simplex^\calA\ \in\ \simplex^\calA\,,
 \end{equation}
 is independent of the choice of $f\in\calF$.
 Write \defcolor{$\psi_\calF$} for the map~\eqref{Eq:psicalF}, which is a continuous injection from $V_\calF$ into 
 $\simplex^\calA$.

 Suppose that $\calF,\calG$ are faces of $\calA$ with $\calF$ a face of $\calG$.
 Then $V_\calG\subset V_\calF$, and for $\varphi\in V_\calG$, we have $\psi_\calF(\varphi)=\psi_\calG(\varphi)$, as both
 maps are computed using $\psi_f$ for $f\in\calF\subset\calG$.
 Thus the maps $\psi_\calF$ for $\calF$ a face of $\calA$ are compatible with the gluing of the 
 $V_\calF$ to form $Y_\Sigma$, and so they induce a continuous map $\defcolor{\Psi_\calA}\colon Y_\Sigma\to\simplex^\calA$.
 This is an injection because if $\calF,\calG$ are faces of $\calA$ that are not faces of each other, then the support of 
 $\varphi\in V_\calF\smallsetminus V_\calG$ contains $\calF$ and is disjoint from $\calG{\smallsetminus}\calF$.
 Consequently, $\Psi_\calA$ is injective on the union $V_\calF\cup V_\calG$.

 We claim that  $\Psi_\calA(Y_\Sigma)=Z_\calA$.
 Since both are complete, it suffices to show that both contain the same dense subset.
 Let $t=\gamma_v\in T_N$ with $v\in N$.
 Since, for $u\in M$, $t^u=\exp(-u\cdot v)$, we have $\varphi_\calA(t)=(\exp(-a\cdot v)\mid a\in\calA)$.
 This lies on a ray in $\RR^\calA_\geq$ that meets $\simplex^\calA$ in the point
 \begin{equation}\label{Eq:varphiZ}
   \bigl(\RR_\geq\cdot(\exp(-a\cdot v)\mid a\in\calA)\bigr)\ \cap\ \simplex^\calA\ \in\ Z_\calA\,.
 \end{equation}

 The corresponding point $\gamma_v.\varepsilon$ of $Y_\Sigma$ lies in
 $V_\calA=\Hom_c(L,\RR_>)$, where $L=\RR(\calA{-}b)$ for any $b\in\calA$.
 Its image $\Psi_\calA(\gamma_v.\varepsilon)$ is
\[
   \bigl(\RR_\geq\cdot(\gamma_v.\varepsilon(a{-}b)\mid a\in\calA)\bigr)\ \cap\ \simplex^\calA\ =\ 
   \bigl(\RR_\geq\cdot(\exp(-a\cdot v)\mid a\in\calA)\bigr)\ \cap\ \simplex^\calA\,,
\]
 as $\varepsilon(a-b)=1$ and $\gamma_v(a-b)=\exp(-a\cdot v)\exp(b\cdot v)$, so that the two rays are equal.
 Comparing this to~\eqref{Eq:varphiZ} completes the proof.
\end{proof}

The restriction that $\calA$ lie in an affine hyperplane of $M$ may be removed by enlarging $N$ and $M$ by adding
a summand of $\RR$ to each and placing $\calA\subset M\oplus\RR$ in the copy of $M$ at height $1$.
That is, as the set of points  $\{(a,1)\mid a\in\calA\}\subset M\oplus\{1\}\subset M\oplus\RR$.

\section{Hausdorff Limits and the Secondary Polytope}
\label{S:Hausdorff}

We use the theory developed in the previous sections to establish our main result about the moduli space of limits of
translations of an irrational projective toric variety.
The set \defcolor{$\closed(X)$} of closed subsets of a compact metric space $X$ is itself a compact metric space.
The Hausdorff distance between $Y,Z\in\closed(X)$ is
\[
   \defcolor{d_H(Y,Z)}\ :=\    
    \max\{\max_{y\in Y} \min_{z\in Z} d(y,z)\, ,\, 
    \max_{z\in Z} \min_{y\in Y} d(y,z)\}\,,
\]
where $d(\bullet,\bullet)$ is the metric on $X$~\cite[p.~279]{Munkres}.

Suppose that $\calA\subset M$ lies on an affine hyperplane so that the irrational affine toric variety
$Y_\calA\subset\RR^\calA_\geq$ is a union of rays with associated irrational projective toric variety
$Z_\calA=Y_\calA\cap\simplex^\calA$.
The positive torus $\RR^\calA_>$ acts linearly on the orthant $\RR^\calA_\geq$ by scaling each coordinate, 
\[  
    w.x\ =\ w.(x_a\mid a\in\calA)\ =\ (w_ax_a\mid a\in\calA)\,,
\]
for $w\in\RR^\calA_>$ and $x\in\RR^\calA_\geq$.
This induces an action of $\RR^\calA_>$ on rays, and hence on  $\simplex^\calA$. 

For $w\in\RR^\calA_>$, the translate $w.Z_\calA$ is a closed subset of $\simplex^\calA$ defined by binomials similar to
those defining $Y_\calA$ of Proposition~\ref{P:IATV} (the actual binomials are described in~\cite[Prop.~A2]{GSZ}).
The association $w\mapsto w.Z_\calA$ gives a continuous  map $\RR^\calA_>\to\closed(\simplex^\calA)$.
Let \defcolor{$\Delta_\calA$} be the closure of that image.
This is a compact Hausdorff space equipped  with a continuous action of $\RR^\calA_>$, and it consists of all Hausdorff limits of translates
of $Z_\calA$. 
The main result of~\cite{PSV} was a set-theoretic identification of the points of $\Delta_\calA$.
We extend that to construct a homeomorphism between $\Delta_\calA$ and the irrational
toric variety associated to the secondary fan of $\calA$.
This fan is normal to the secondary polytope of $\calA$, so Theorem~\ref{Th:ProjITV} identifies $\Delta_\calA$ with the
secondary polytope.
This extends the observation of~\cite{GSZ} which adapted the results of~\cite{KSZ1,KSZ2} to prove this statement
when $\calA\subset M_\ZZ$ is integral.
 
We define the secondary fan and secondary polytope of $\calA$, which were introduced in~\cite{GKZ} 
(see also~\cite{dLRS}).
For $\lambda\in\RR^\calA$, let $Q_\lambda\subset M\oplus\RR$ be the convex hull
of the lifted points
\[
   \defcolor{Q_\lambda}\ :=\    
    \conv\{ (a,\lambda_a)\in M\oplus\RR \mid a\in\calA\}\ .
\]
A \demph{lower face} of $Q_\lambda$ is a face with an (inner) normal vector whose last coordinate is positive.
We define a system \defcolor{$S(\lambda)$} of subsets of $\calA$.
A subset $\calF\subset\calA$ is an element of $S(\lambda)$ if there is a lower face $F$ of $Q_\lambda$ such that 
\[
   \calF\ =\ \{a\in\calA \mid (a,\lambda_a)\in F\}\,.
\]
A system $S$ of subsets is a \demph{regular subdivision of $\calA$} if $S=S(\lambda)$ for some
$\lambda\in\RR^\calA$. 
Elements $\calF$ of a regular subdivision $S$ are its \demph{faces}.
Figure~\ref{F:Regular} shows four regular subdivisions of a set $\{a,b,c,d,e\}$ of points in the plane, including
\begin{figure}[htb]
 \centering
   \begin{picture}(118,90)
     \put(0,0){\includegraphics[height=90pt]{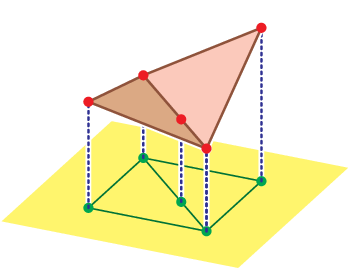}}
     \put(21,17){\small$a$} \put(40,36){\small$b$} \put(51,21){\small$c$} 
     \put(69,5){\small$d$} \put(91,26){\small$e$}
   \end{picture}%
   \begin{picture}(118,90)
     \put(0,0){\includegraphics[height=90pt]{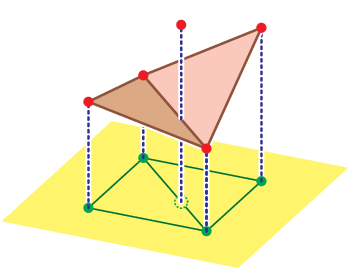}}
     \put(21,17){\small$a$} \put(40,36){\small$b$} \put(51,21){\small$c$} 
     \put(69,5){\small$d$} \put(91,26){\small$e$}
   \end{picture}%
   \begin{picture}(118,90)
     \put(0,0){\includegraphics[height=90pt]{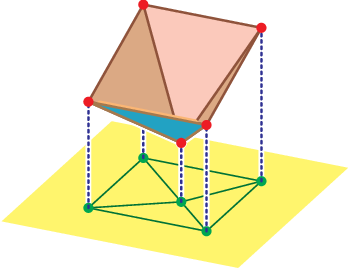}}
     \put(21,17){\small$a$} \put(40,36){\small$b$} \put(50,23.5){\small$c$} 
     \put(69,5){\small$d$} \put(91,26){\small$e$}
   \end{picture}%
   \begin{picture}(118,90)
     \put(0,0){\includegraphics[height=90pt]{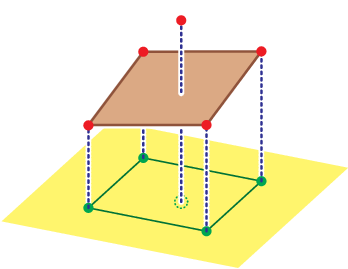}}
     \put(21,17){\small$a$} \put(40,36){\small$b$} \put(52,21){\small$c$} 
     \put(69,5){\small$d$} \put(91,26){\small$e$}
   \end{picture}
 \caption{Regular subdivisions of a point set.}
 \label{F:Regular}
\end{figure}
the lower faces of the lifted polytope $Q_\lambda$.
We list the facets (maximal elements) of each subdivision.
The first on the left has two facets, $\{a,b,c,d\}$ and $\{b,c,d,e\}$, as the lift of the point $c$ is collinear with the
lifts of $b$ and $d$.
The point $c$ is lifted above the lower hull in the second and does not participate in this regular subdivision, whose
facets are  $\{a,b,d\}$ and $\{b,d,e\}$.  
The third has four facets, $\{a,b,c\}$, $\{a,c,d\}$, $\{b,c,e\}$, and $\{c,d,e\}$.
The fourth has one facet $\{a,b,d,e\}$, as their lifts are coplanar and the point $c$ is again lifted above the
lower hull.

Two elements $\lambda,\mu\in\RR^\calA$ are equivalent if they induce the same regular
subdivision of $\calA$, $S(\lambda)=S(\mu)$.
An equivalence class is defined by finitely many linear equations and linear inequalities, and so the closure of each
equivalence class is a polyhedral cone.
These cones fit together to form the \demph{secondary fan $\Sigma(\calA)$} of the point configuration $\calA$.

A face $\calF\in S(\lambda)$ corresponds to a lower face $F$ of the lifted polytope $Q_\lambda$,
with  $F=\conv\{(f,\lambda_f)\mid f\in\calF\}$.
The projections of the lower faces of $Q_\lambda$ to $M$ form a regular polyhedral subdivision of $\conv(\calA)$ whose
polytopes are $\conv(\calF)$ for $\calF\in S(\lambda)$.  
A regular subdivision $S(\lambda)$ is a \demph{triangulation} if for each face $\calF\in S(\lambda)$, its convex hull is a 
simplex with vertices $\calF$.
Triangulations correspond to full-dimensional cones of $\Sigma(\calA)$.
The middle two subdivisions in Figure~\ref{F:Regular} are triangulations.

The secondary fan $\Sigma(\calA)$ is the normal fan of the \demph{secondary polytope $P(\calA)$} of
$\calA$. 
It lies in $\RR^\calA$ and its vertices correspond to regular triangulations $\calT$ of $\calA$.
The $a$th coordinate of the vertex corresponding to $\calT$ is the sum of the volumes of the convex
hulls of faces $\calF\in\calT$ that contain $a$.
When $a$ does not participate in $\calT$, the vertex has $a$th coordinate 0.

A subset $\calF\subset\calA$ corresponds to a face $\simplex^\calF\subset\simplex^\calA$; it
is the set of points $x\in\simplex^\calA$ with $x_a=0$ for $a\not\in\calF$.
As in Section~\ref{S:Affine}, the irrational toric variety $Z_\calF$ lies in this face $\simplex^\calF$.
For a regular subdivision $S$ of $\calA$, we define a \demph{complex $Z(S)$} of irrational toric varieties to be
 \begin{equation}\label{Eq:complex}
    Z(S)\ :=\ \bigcup_{\calF\in S}  Z_\calF\ .
 \end{equation}
This is a complex in that if $\calF,\calF'\in S$ with $\emptyset\neq\calG=\calF\cap\calF'$, so that $\calG$ is also a face
of $S$, then 
\[
    Z_\calG\ =\ Z_\calF \cap Z_{\calF'}\, .
\]

An element $w\in\RR^\calA_>$ acts on the projective toric variety $Z_\calF$ as before, with
$w.Z_\calF\subset\simplex^\calF$.
Only the coordinates $w_f$ of $w$ for $f\in\calF$ act on $\simplex^\calF$.
For a regular subdivision $S$ of $\calA$ and $w\in\RR^\calA_>$, we have the translated complex of irrational toric
varieties, 
\[
   \defcolor{Z(S,w)}\ :=\ \bigcup_{\calF\in S}  w.Z_\calF\ .
\]
We recall one of the main results of~\cite{PSV}.

\begin{proposition}[{\cite[Thm.~3.3]{PSV}}]
 The points of $\Delta_\calA\subset\closed(\simplex^\calA)$ are exactly the translated complexes of irrational toric
 varieties $Z(S,w)$ for $w\in\RR^\calA_>$ and $S$ a regular subdivision of $\calA$.
\end{proposition}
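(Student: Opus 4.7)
The plan is to prove both inclusions: first, that each translated complex $Z(S,w)$ arises as a Hausdorff limit of translates $w'.Z_\calA$; second, that every Hausdorff limit has this form. Both directions pivot on the correspondence between cones of the secondary fan $\Sigma(\calA)$ and regular subdivisions of $\calA$, together with the one-parameter limit analysis of Section~\ref{S:irrational}.

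For the forward direction, fix a subdivision $S=S(\lambda)$ with $\lambda\in\RR^\calA$ and any $w\in\RR^\calA_>$. Consider the path $w_t:=w\cdot(\exp(-t\lambda_a)\mid a\in\calA)$ for $t\geq 0$. Using the parametrization of the dense orbit, a general point of $w.Z_\calA$ has homogeneous coordinates $\bigl(w_a\exp(-a\cdot v)\mid a\in\calA\bigr)$ for $v\in N$; translating by $w_t/w$ multiplies the $a$th coordinate by $\exp(-t\lambda_a)$, and after projection to $\simplex^\calA$ the surviving coordinates as $t\to\infty$ are those $a\in\calA$ minimizing $t\lambda_a+a\cdot v$. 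The $a$ achieving this minimum for some choice of $v$ are precisely the points of $\calA$ on the lower face of $Q_\lambda$ exposed by $(v,1)$, hence a face $\calF\in S$. One then checks that the collection of all limit points, as $v$ ranges over $N$, is $\bigcup_{\calF\in S} w.Z_\calF=Z(S,w)$, and upgrades this pointwise statement to Hausdorff convergence using the compactness of each $Z_\calF\simeq \conv(\calF)$ afforded by Birch's Theorem (Proposition~\ref{P:Birch}) together with Theorem~\ref{Th:ProjITV}.

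For the reverse direction, suppose $w_n.Z_\calA\to Y$ in $\closed(\simplex^\calA)$. Set $\lambda_n:=-\log w_n\in\RR^\calA$. Since $\Sigma(\calA)$ is complete (being the normal fan of the secondary polytope $P(\calA)$), after passing to a subsequence we may assume all $\lambda_n$ lie in the relative interior of a single cone $\sigma\in\Sigma(\calA)$, corresponding to a regular subdivision $S$. Now apply the boundedness-plus-unboundedness decomposition used in the proof of Theorem~\ref{Th:complete}: take a minimal face $\tau$ of $\sigma$ for which the image of $\{\lambda_n\}$ in $\RR^\calA/\langle\tau\rangle$ has a bounded subsequence; along this subsequence write $\lambda_n=b_n+c_n$ with $\{b_n\}$ bounded, say $b_n\to b$, and $c_n\in\tau$. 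The bounded part contributes a genuine translate $w:=\exp(-b)\in\RR^\calA_>$ to the limit, while the unbounded direction $c_n\in\tau$ drives the limit onto the subcomplex $Z(S,w)$ by the same lower-envelope computation as in the forward direction, with the direction of escape determined by the cone $\sigma$ (hence by $S$).

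The main obstacle is controlling the Hausdorff (rather than merely pointwise or subsequential) convergence in both directions, and in particular verifying that each stratum $w.Z_\calF$ for $\calF\in S$ is hit in the limit and nothing extraneous appears. Concretely, for each $y\in w.Z_\calF$ one must produce a choice of parameter $v_t\in N$, typically escaping to infinity in a direction dual to $\calF$, such that the corresponding points of $w_t.Z_\calA$ converge to $y$; this mirrors the delicate balancing of the bounded and unbounded components $b_n,c_n$ in the proof of Theorem~\ref{Th:complete}. Once this stratum-by-stratum analysis is in place, the decomposition $Z(S,w)=\bigsqcup_{\calF\in S}w.Z^\circ_\calF$ together with continuity of the $\RR^\calA_>$-action yields the identification of $Y$ with $Z(S,w)$ and completes the proof.
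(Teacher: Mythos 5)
Your framework---secondary-fan cones, one-parameter escape, and the bounded/unbounded decomposition from the proof of Theorem~\ref{Th:complete}---is the right one. Note that the paper itself imports this proposition from \cite[Thm.~3.3]{PSV} without proof, so there is no in-text argument to compare against; your sketch instead tracks how the paper deploys the same machinery later, in the proof of Theorem~\ref{Th:SecondaryPolytope}.

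There is, however, a genuine error in your reverse direction. After passing to a subsequence with $\lambda_n$ in the relative interior of a cone $\sigma$, you set $S=S_\sigma$, the regular subdivision attached to $\sigma$, and claim the limit is $Z(S,w)$ for that $S$. This misidentifies the limit. The escape happens along $c_n\in\tau$, where $\tau$ is the \emph{minimal face} of $\sigma$ for which $\{\lambda_n\}$ is bounded modulo $\langle\tau\rangle$, and the lower-envelope computation for that escape direction produces the complex indexed by $S_\tau$, the (generally strictly coarser) regular subdivision attached to $\tau$, not $S_\sigma$. For instance, with $\sigma$ full-dimensional (so $S_\sigma$ a triangulation) and $\lambda_n=ne_1+e_2\in\sigma^\circ$, one has $\tau=\RR_\geq e_1\subsetneq\sigma$, and the Hausdorff limit is $Z(S_\tau,w)$, not $Z(S_\sigma,w)$; the paper's own proof of Theorem~\ref{Th:SecondaryPolytope} records precisely $\lim_n w_n.Z_\calA=Z(S_\tau,v)$. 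The proposition's conclusion survives after replacing $S_\sigma$ by $S_\tau$, since $S_\tau$ is still a regular subdivision, but as written your verification that the limit lies in the claimed form does not go through.

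A smaller, related issue in the forward direction: the claim that ``the surviving coordinates as $t\to\infty$ are those $a\in\calA$ minimizing $t\lambda_a+a\cdot v$'' does not, for a fixed $v$, select the face of $Q_\lambda$ exposed by $(v,1)$; for large $t$ it just selects the minimizers of $\lambda_a$ (the face exposed by $(0,1)$). To reach the face $\calF$ exposed by $(u,1)$ you must let the orbit parameter escape along $u$ as well, taking $v=v_0+tu$ with $v_0$ tuned to hit the desired point of $w.Z_\calF$. You acknowledge exactly this in your closing paragraph, so the forward direction is salvageable, but the computation as first stated is incorrect and should be replaced by the parametrized-escape version before the stratum-by-stratum Hausdorff argument can be completed.
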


Recall that $\varepsilon\in Y_{\Sigma(\calA)}$ is the distinguished point of its dense $\RR^\calA_>$-orbit.
We give our main theorem.

\begin{theorem}\label{Th:SecondaryPolytope}
 For $w\in\RR^\calA_>$, the association $\psi\colon w.Z_\calA\mapsto w.\varepsilon$ is a well-defined continuous map from the
 set of translates of $Z_\calA$ to the dense orbit of $Y_{\Sigma(\calA)}$ that extends to
 an $\RR^\calA_>$-equivariant homeomorphism 
 $\Delta_\calA\xrightarrow{\sim}Y_{\Sigma(\calA)}$. 
 Composing it with an algebraic moment map $Y_{\Sigma(\calA)}\to P(\calA)$ gives a homeomorphism between $\Delta_\calA$ and
 the secondary polytope $P(\calA)$.
\end{theorem}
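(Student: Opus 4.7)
The plan is to construct $\psi$ first on the set of translates $w.Z_\calA$, identify it with the dense orbit of $Y_{\Sigma(\calA)}$ by matching stabilizers, extend orbit-by-orbit using the bijection between regular subdivisions of $\calA$ and cones of the secondary fan, and then deal with continuity across orbit boundaries. Both $\Delta_\calA$ and $Y_{\Sigma(\calA)}$ carry $\RR^\calA_>$-actions with finitely many orbits, so $\RR^\calA_>$-equivariance automatically gives continuity of $\psi$ orbit-by-orbit; the real work is in limits. The final step is to compose with an algebraic moment map.

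For the dense orbit, I would identify the stabilizer of $Z_\calA$ in $\RR^\calA_>$ with that of $\varepsilon$. An element $w$ stabilizes $Z_\calA$ precisely when $-\log w$ is an affine functional on $\calA$, that is, $-\log w_a = a\cdot v + c$ for some $v\in N$ and $c\in\RR$; such a $w$ acts on $\simplex^\calA$ as a positive scalar times $\varphi_\calA(\gamma_v)$ and hence fixes $Z_\calA$. By Example~\ref{Ex:LinSpace}, the stabilizer of $\varepsilon$ in $Y_{\Sigma(\calA)}$ is the image of $T_L$, where $L\subset\RR^\calA$ is the lineality space of $\Sigma(\calA)$. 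But $L$ is exactly the space of affine functionals on $\calA$, since these are the lifts $\lambda$ under which $Q_\lambda$ has a single lower facet, so the two stabilizers agree and $\psi$ is a well-defined equivariant bijection of dense orbits. For the extension, I would use that cones $\sigma_S$ of $\Sigma(\calA)$ are indexed by regular subdivisions $S$, and by Theorem~\ref{Th:ITV_structure} so are the orbits $U_{\sigma_S}\subset Y_{\Sigma(\calA)}$, while by the cited Proposition~3.3 of~\cite{PSV} the $\RR^\calA_>$-orbits on $\Delta_\calA$ are indexed by regular subdivisions via $Z(S,w)$. A parallel stabilizer comparison---the stabilizer of $Z(S,1)$ consists of those $w$ whose logarithm is affine on each facet of $S$, and such logs span $\sigma_S^\vee$---gives a well-defined equivariant bijection $Z(S,w)\mapsto w.x_{\sigma_S}$ from $\Delta_\calA$ to $Y_{\Sigma(\calA)}$ extending $\psi$.

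The main obstacle is continuity. I would establish it by analyzing limits along one-parameter subgroups: for $\lambda$ in the relative interior of $\sigma_S$, set $w_s:=(\exp(-s\lambda_a))_{a\in\calA}\in\RR^\calA_>$. Each coordinate $z_a$ of a point of $Z_\calA$ is scaled under $w_s$ by $\exp(-s\lambda_a)$, so after projection to $\simplex^\calA$ only coordinates at points $a$ on the lower envelope of $Q_\lambda$ survive as $s\to\infty$. Consequently the Hausdorff limit of $w_s.Z_\calA$ in $\closed(\simplex^\calA)$ is exactly $Z(S,1)$, which matches $\lim_{s\to\infty} w_s.\varepsilon = x_{\sigma_S}$ from Lemma~\ref{L:limits_exist}. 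Equivariance, combined with a sequential compactness argument that reduces any limit in $Y_{\Sigma(\calA)}$ to such a one-parameter degeneration after passing to a subsequence, promotes this to continuity of $\psi$ on all of $\Delta_\calA$. Since $\Sigma(\calA)$ is complete, Theorem~\ref{Th:complete} makes $Y_{\Sigma(\calA)}$ compact Hausdorff, and $\Delta_\calA$ is compact Hausdorff as a closed subset of $\closed(\simplex^\calA)$, so a continuous equivariant bijection between them is a homeomorphism. Finally, since $\Sigma(\calA)$ is the normal fan of $P(\calA)$, composing $\psi$ with the algebraic moment map of Theorem~\ref{Th:ProjITV} yields the asserted homeomorphism $\Delta_\calA \xrightarrow{\sim} P(\calA)$.
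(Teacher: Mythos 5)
Your proposal follows essentially the same route as the paper's proof: an orbit-by-orbit bijection via stabilizer matching (dense orbit first, then each orbit indexed by a regular subdivision), continuity via sequential limit analysis, a topological upgrade to a homeomorphism, and composition with the algebraic moment map from Theorem~\ref{Th:ProjITV}. Your invocation of the fact that a continuous bijection from a compact space to a Hausdorff space is a homeomorphism is a mild streamlining of the paper's argument, which instead verifies directly that both $\psi$ and $\psi^{-1}$ preserve limits of sequences.

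There is one concrete slip in the stabilizer comparison used to extend the map to lower-dimensional orbits. In logarithmic coordinates, the stabilizer of the distinguished point $x_{\sigma_S}\in U_{\sigma_S}\subset Y_{\Sigma(\calA)}$ is the linear span $\langle\sigma_S\rangle\subset\RR^\calA$. You assert that the logs affine on each facet of $S$ span $\sigma_S^\vee$; but $\sigma_S^\vee$ is a cone in the dual space $(\RR^\calA)^*$, not a linear subspace of $\RR^\calA$, so it cannot be a stabilizer subgroup. The correct match, supplied by~\cite[Lem.~2.4]{PSV} as the paper cites, is that the stabilizer of $Z(S,1)$ equals $\langle\sigma_S\rangle$, after which well-definedness of $Z(S,w)\mapsto w.x_{\sigma_S}$ follows. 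Separately, your claim that any limit reduces to a one-parameter degeneration after passing to a subsequence compresses the actual computation: in the paper a general sequence $v_n$ is split as $b_n+c_n$ with $b_n\to b$ bounded and $c_n$ escaping inside a minimal face $\tau$, yielding limits $\gamma_b.x_\tau$ and $Z(S_\tau,\gamma_b)$ in the two spaces; equivariance absorbs $\gamma_b$ and the escape direction is one-parameter-like, so your sketch can be filled in, but it is not literally a one-parameter subgroup limit.
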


\begin{proof}
 We first extend $\psi$ to an $\RR^\calA_>$-equivariant bijection between $\Delta_\calA$ and
 $Y_{\Sigma(\calA)}$, which shows that it is well-defined and that it is a homeomorphism on orbits of
 $\RR^\calA_>$. 
 To show that $\psi$ is a homeomorphism between $\Delta_\calA$ and $Y_{\Sigma(\calA)}$, we will use the proof of
 Theorem~\ref{Th:complete} and Section 4.1 of~\cite{PSV}.
 The last statement is Theorem~\ref{Th:ProjITV}, as $\Sigma(\calA)$ is the normal fan to $P(\calA)$.

 For the toric variety $Y_{\Sigma(\calA)}$, we have $N=\RR^\calA$ as this is the ambient space for the fan $\Sigma(\calA)$.
 Its dual space $M$ is naturally identified also with $\RR^\calA$ under the usual Euclidean dot product.
 For $v\in N$, the element $\gamma_v\in T_N$ is defined~\eqref{Eq:gammav} for $u\in M$ by
 $\gamma_v(u)=\exp(-u\cdot v)$.
 This identifies $N=\RR^\calA$ with $\RR^\calA_>$ where $(v_a\mid a\in\calA)\mapsto (e^{-v_a}\mid a\in\calA)$.

 Let $\sigma$ be a cone of $\Sigma(\calA)$ with corresponding regular subdivision \defcolor{$S_\sigma$}.
 The orbit $U_\sigma$ in  $Y_{\Sigma(\calA)}$ has distinguished point  $x_\sigma$.
 Under the map $\RR^\calA\xrightarrow{\sim}\RR^\calA_>$ given by $v\mapsto \gamma_v$, the orbit
 $U_\sigma=\RR^\calA_>.x_\sigma$ is identified with $\RR^\calA/\langle\sigma\rangle$, so that the stabilizer of $x_\sigma$
 is the linear span $\langle\sigma\rangle$ of $\sigma$.
 This identification is as a topological space, and as a $\RR^\calA_>$ or $\RR^\calA$-orbit.

 The complex $Z(S_\sigma)=Z(S_\sigma,1)$  for the regular subdivision $S_\sigma$ is a distinguished point of $\Delta_\calA$
 corresponding to $\sigma$. 
 By Lemma~2.4 of~\cite{PSV}, the stabilizer of $Z(S_\sigma,1)$ in $\RR^\calA$ is also $\langle\sigma\rangle$.
 Again, the orbit of $Z(S_\sigma,1)$ is identified with  $\RR^\calA/\langle\sigma\rangle$, as a topological space.
 Thus the association $\psi\colon Z(S_\sigma,\gamma_v)\mapsto \gamma_v.x_\sigma$ is a well-defined map.
 This induces an $\RR^\calA_>$-equivariant homeomorphism between the orbits in both spaces, because when both orbits are
 identified with $\RR^\calA/\langle\sigma\rangle$, $\psi$ becomes the identity map.
 When the subdivision $S_\sigma$ has a single facet $\calA$, so that $\sigma$ is the minimal cone of $\Sigma(\calA)$, this
 is the map $\psi$ in the statement of the theorem.

 Write $\psi\colon\Delta_\calA\to Y_{\Sigma(\calA)}$ for the bijection which is given by
\[
   \psi\ \colon\   Z(S_\sigma,w)\ \longmapsto\  w.x_\sigma\,,
\]
 for $w\in\RR^\calA_>$ and $\sigma$ a cone of $\Sigma(\calA)$.
 We show that if $\{w_n\mid n\in\NN\}\subset\RR^\calA_>$, $w\in\RR^\calA_>$, and $\sigma\in\Sigma(\calA)$ are such that 
 \begin{equation}\label{Eq:A}
    \lim_{n\to\infty} w_n.Z_\calA\ =\ Z(S_\sigma,w) 
 \end{equation}
 in the space $\Delta_\calA$ of Hausdorff limits, then 
 \begin{equation}\label{Eq:B}
    \lim_{n\to\infty} w_n.\varepsilon\ =\ w. x_\sigma 
 \end{equation}
 in the irrational toric variety $Y_{\Sigma(\calA)}$, and vice versa.
 As both $\Delta_\calA$ and $Y_{\Sigma(\calA)}$ are $\RR^\calA_>$-equivariant closures of their dense orbits, this will
 complete the proof.
 A more granular proof could work on pairs of corresponding orbits in the two spaces, arguing on each
 component $Z_\calF$ and using the recursive structure of both sets and of regular subdivisions and refinements.

 Our argument that $\psi$ preserves limits of sequences follows from the proofs of Theorem~3.3 in~\cite{PSV} and  
 Theorem~\ref{Th:complete} (which are essentially the same).
 These proofs show that the sequences of translates $w_n.Z_\calA$ and $w_n.\varepsilon$ have convergent subsequences.
 In each, we replace $w_n\in\RR^\calA_>$ by $v_n\in\RR^\calA$, where $w_n=\gamma_{v_n}$, and then
 replace $\{v_n\mid n\in\NN\}$ by any subsequence $\{v_n\mid n\in\NN\}\cap\sigma$, where $\sigma$ is a cone of $\Sigma(\calA)$ 
 that meets  $\{v_n\mid n\in\NN\}$ in an infinite set.
 Finally, $\tau$ is a minimal face of $\sigma$ such that $\{v_n\mid n\in\NN\}$ is bounded in
 $\RR^\calA/\langle\tau\rangle$, and then $v\in\RR^\calA$ satisfies
\[
    v\ +\ \langle\tau\rangle\ \mbox{ is an accumulation point of } 
    \{v_n + \langle\tau\rangle\mid n\in\NN\}\ \mbox{ in }\ \RR^\calA/\langle\tau\rangle\,.
\]
 Restricting to subsequences, we have $\lim_{n\to\infty}w_n.\varepsilon=v.x_\tau$ and
 $\lim_{n\to\infty}w_n.Z_\calA=Z(S_\tau,v)$. 

 We assumed that the original limits (\eqref{Eq:A} or~\eqref{Eq:B})  exist without restricting to subsequences.
 Thus, for every cone $\sigma$ with $\sigma\cap\{v_n\mid n\in\NN\}$ infinite,  $\tau$ is the unique minimal face of
 $\sigma$ such that $\{v_n\mid n\in\NN\}$ is bounded in 
 $\RR^\calA/\langle\tau\rangle$, and then $v$ is a point such that  
\[
    \lim_{n\to\infty} v_n + \langle\tau\rangle\ =\ v\ +\ \langle\tau\rangle\,.
\]
 This completes the proof, as both $\psi$ and $\psi^{-1}$ preserve limits of sequences.
\end{proof}

\providecommand{\bysame}{\leavevmode\hbox to3em{\hrulefill}\thinspace}
\providecommand{\MR}{\relax\ifhmode\unskip\space\fi MR }
\providecommand{\MRhref}[2]{%
  \href{http://www.ams.org/mathscinet-getitem?mr=#1}{#2}
}
\providecommand{\href}[2]{#2}

\end{document}